\newcommand{\R}{\mathbb{R}}
\renewcommand{\subset}{\subseteq}
\newcommand{\conv}{\mathop\mathrm{conv}\nolimits}
\crefname{theorem}{Theorem}{Theorems}
\crefname{lemma}{Lemma}{Lemmas}
\crefname{corollary}{Corollary}{Corollaries}
\crefname{proposition}{Proposition}{Propositions}
\crefname{conjecture}{Conjecture}{Conjectures}
\crefname{section}{Section}{Sections}
\crefname{figure}{Figure}{Figures}
\crefname{definition}{Definition}{Definitions}
\spnewtheorem*{theorem*}{Theorem}{\normalshape\bfseries}{\itshape}
\newtheorem{oq}[theorem]{Open Question}
\newlist{myenumerate}{enumerate}{1}
\setlist[myenumerate]{
  label={\itshape(\roman*)},  
  ref={\itshape(\roman*)},    
}
\crefname{myenumeratei}{}{}
\crefname{enumi}{}{}
\title{On the Circuit Diameter Conjecture for\\ Counterexamples to the Hirsch Conjecture  \thanks{%
    This work was completed in part at the 2022 Graduate Research Workshop in Combinatorics, which was supported in part by NSF grant 1953985, and a generous award from the Combinatorics Foundation.\\ The first author was supported by the NSF GRFP and NSF DMS-1818969.
    The second author was supported by Air Force Office of Scientific Research grant FA9550-21-1-0233, NSF grant 2006183, Algorithmic Foundations, Division of Computing and Communication Foundations, and Simons Foundation grant 524210. 
    The third author was partially supported by the Alexander von Humboldt Foundation with funds from the German Federal Ministry of Education and Research (BMBF).
}}
\titlerunning{ }
\author{%
    Alexander~E.~Black\inst{1}\orcidID{0000-0002-7445-5820} \and
    Steffen~Borgwardt\inst{2}\orcidID{0000-0002-8069-5046} \and
    Matthias~Brugger\inst{3}\orcidID{0000-0003-1571-5239}
}
\authorrunning{ }
\institute{%
    University of California, Davis\\ \email{aeblack@ucdavis.edu}\\ \and
    University of Colorado Denver\\ \email{steffen.borgwardt@ucdenver.edu}\\ \and
    Technical University of Munich\\ \email{matthias.brugger@tum.de}
}
\begin{document}
\maketitle
\begin{abstract}
Circuit diameters of polyhedra are a fundamental tool for studying the complexity of circuit augmentation schemes for linear programming and for finding lower bounds on combinatorial diameters. The main open problem in this area is the circuit diameter conjecture, the analogue of the Hirsch conjecture in the circuit setting. A natural question is whether the well-known counterexamples to the Hirsch conjecture carry over. Previously, Stephen and Yusun showed that the Klee-Walkup counterexample to the unbounded Hirsch conjecture does not transfer to the circuit setting. Our main contribution is to show that the original counterexamples for the other variants, for bounded polytopes and using monotone walks, also do not transfer. 

Our results rely on new observations on structural properties of these counterexamples. To resolve the bounded case, we exploit the geometry of certain $2$-faces of the polytopes underlying all known bounded Hirsch counterexamples in Santos' work. For Todd's monotone Hirsch counterexample, we provide two alternative approaches. The first one uses sign-compatible circuit walks, and the second one uses the observation that Todd's polytope is anti-blocking. Along the way, we enumerate all linear programs over the polytope and find four new orientations that contradict the monotone Hirsch conjecture, while the remaining $7107$ satisfy the bound.

\keywords{circuits  \and diameter \and polyhedra \and linear programming.}
\end{abstract}

\section{Introduction}

Relating the diameter of a polyhedron to its dimension and its number of facets is a classical topic in optimization. The {\em combinatorial diameter} of a polyhedron is the maximum length of a shortest path between any two vertices in the graph (or $1$-skeleton) of the polyhedron. The famous Hirsch conjecture \cite{d-63} claimed a bound of $f-d$ on the combinatorial diameter of any $d$-dimensional polyhedron with $f$ facets. It was first disproved for unbounded polyhedra \cite{kw-67} and in a stronger setting requiring monotone walks \cite{ToddExample}, and only disproved much later for polytopes \cite{msw-15,s-11}, i.e., bounded polyhedra. 
Today, the arguably most important open question in the field is the {\em polynomial Hirsch conjecture}, which asks whether there is a polynomial bound on the diameter in terms of $f$ and $d$. Note that the existence of a strongly polynomial pivot rule for the Simplex method would require this conjecture to be true. The same holds for a polynomial version of the {\em monotone Hirsch conjecture}, which concerns edge walks that are strictly decreasing and lead to a minimal vertex for some linear objective function.

To approach these long-standing questions, a number of abstractions and generalizations of edge walks on the $1$-skeleton have been introduced (see, e.g., \cite{cs-22,ContinuousHirsch,ehrr-10,KimAbstraction,ks-10,s-13} and references therein). We are interested in the {\em circuit diameter} of polyhedra introduced in \cite{bfh-14}. {\em Circuits} are a classical topic in oriented matroid theory \cite{oxley-06}, and correspond to the elementary vectors as introduced in \cite{r-69}. Informally, they can be interpreted as support-minimal linear dependencies between the columns of a constraint matrix. In this work, we study whether the original counterexamples to the Hirsch conjecture and the monotone Hirsch conjecture can be transferred to the circuit setting. We begin with some necessary background in Section \ref{sec:conjecture} and explain our contributions in Section \ref{sec:contributions}. 

\subsection{Circuit Diameters and the Circuit Diameter Conjecture}\label{sec:conjecture}

We follow \cite{bv-17,bv-22,dknv-22,dhl-15} for formal definitions and the important concepts. 

\begin{definition}[Circuits]
For a rational polyhedron $P = \{ \mathbf{x} \in \R^n \colon A \mathbf{x} = \mathbf{b}, B \mathbf{x} \leq \mathbf{d} \}$, the set of circuits of $P$, denoted $\mathcal{C}(A,B)$, consists of all vectors $\mathbf{g} \in \ker(A) \setminus \{ \mathbf{0} \}$, normalized to have coprime integer components, for which $B \mathbf{g}$ is support-minimal in the set $\{ B \mathbf{x} \colon \mathbf{x} \in \ker(A) \setminus \{ \mathbf{0} \}\}$.
\end{definition}

The set of circuits is precisely the set of potential edge directions of the polyhedron as the right hand sides $\mathbf{b}$ and $\mathbf{d}$ vary \cite{g-75}. In particular, it contains the set of all actual edge directions. Thus, a {\em circuit walk}, referring to a sequence of maximal steps along circuits, is a direct generalization of an edge walk.

 \begin{definition}[Circuit Walk]\label{def:circuitwalk}
 Let $P = \{ \mathbf{x} \in \R^n \colon A \mathbf{x} = \mathbf{b}, B \mathbf{x} \leq \mathbf{d} \}$ be a polyhedron. For two vertices $\mathbf{v}_1$ and $\mathbf{v}_2$ of $P$, we call a sequence $\mathbf{v}_1 = \mathbf{y}_0,\dots,\mathbf{y}_k = \mathbf{v}_2$ a circuit walk of length $k$ from $\mathbf{v}_1$ to $\mathbf{v}_2$ in $P$ if, for all $i = 1,\dots,k$, 
 \begin{myenumerate}
     \item $\mathbf{y}_i \in P$,
     \item $\mathbf{y}_{i} = \mathbf{y}_{i-1} +\alpha_i \mathbf{g}_i$ for some $\mathbf{g}_i \in \mathcal{C}(A,B)$ and $\alpha_i > 0$, and
     \item $\mathbf{y}_{i-1} + \alpha \mathbf{g}_i \notin P$ for all $\alpha > \alpha_i$.
        \label{def:circuitwalk-3}
 \end{myenumerate}
 \end{definition}
 
 We define the {\em circuit diameter} of a polyhedron $P$ as the maximum length of a shortest circuit walk between any pair of vertices of $P$. Note that, unlike edge walks, circuit walks are not necessarily reversible: the number of steps required to walk from $\mathbf{v}_1$ to $\mathbf{v}_2$ may not be the same as from $\mathbf{v}_2$ to $\mathbf{v}_1$. We use $\Delta(f,d)$ to denote the maximum circuit diameter of any $d$-dimensional polyhedron with $f$ facets.  The circuit analogue of the Hirsch conjecture, the {\em circuit diameter conjecture} \cite{bfh-14}, asks whether $\Delta(f,d)\leq f-d$ and is open. 
 
 As with the Hirsch conjecture, there is a monotone variant of the circuit diameter conjecture. To start, 
we call a circuit walk $\mathbf{y}_0,\dots,\mathbf{y}_k$ {\em monotone} with respect to a linear objective function $\mathbf{c}$ if the sequence $(\mathbf{c}^\top \mathbf{y}_i)_{i=0,\dots,k}$ is strictly decreasing. 
The {\em monotone circuit diameter} of a polyhedron $P$ is defined as the maximum length of a shortest monotone circuit walk from any vertex of $P$ to a vertex minimizer of $\mathbf{c}$ across all possible choices of objective function $\mathbf{c}$. It is open whether the monotone circuit diameter is always bounded by $f-d$.

The studies of circuit diameters, monotone circuit diameters, and the associated conjectures arise in several ways. Clearly, the circuit diameter is a lower bound on the combinatorial diameter, and thus is studied as a proxy. In the same way that the combinatorial diameter relates to the possible efficiency of a primal Simplex method, (monotone) circuit diameters are intimately related to the efficiency of {\em circuit augmentation schemes} for linear programs \cite{bv-19b,bv-22,dhl-15,env-21,gdl-14,gdl-15}. Further, a resolution of the circuit diameter conjecture would reveal some information as to {\em why} the Hirsch bound of $f-d$ is violated in the combinatorial setting \cite{bdf-16,bsy-18}. More specifically, an affirmative answer to the circuit diameter conjecture implies that it is the restriction from circuit to edge steps that causes the violation. On the other hand, if not even the circuit diameter satisfies the Hirsch bound, the reason for this would be the maximality of steps in \cref{def:circuitwalk}\cref{def:circuitwalk-3}: if the step lengths are not required to be maximal, the so-called {\em conformal sum property} \cite{bk-84,r-69,z-95} guarantees the existence of a walk of at most $f-d$ circuit steps between any pair of vertices  (see also \cref{sec:shortcirc}). 

Despite results on circuit diameters for some polyhedra in combinatorial optimization (see, e.g., \cite{bdfm-18,bfh-16,kps-17}), and general upper bounds involving the so-called circuit imbalance \cite{dknv-22,env-21} or the input bit-size \cite{dks-22}, not much is known about the potential validity of the circuit diameter conjecture. It was shown in \cite{bsy-18} that $\Delta (8, 4) = 4$. In particular, this holds for the Klee-Walkup polyhedron in the original counterexample to the unbounded Hirsch conjecture \cite{kw-67}: an unbounded $4$-dimensional polyhedron with $8$ facets and combinatorial diameter $5 >8-4$, but circuit diameter at most $4$ \cite{sy-15}. The question that motivates our work is whether any of the other well-known counterexamples to variants of the Hirsch conjecture may be counterexamples to the respective circuit analogues.

\subsection{Contributions}\label{sec:contributions}

We study four polytopes that appear in the construction of (monotone) Hirsch counterexamples: the $5$-dimensional polytopes $S^{48}_5$ from \cite{s-11} and $S^{25}_5$, $S^{28}_5$ from \cite{msw-15} (named for their number of facets $f=48$ or $f=25,28$, respectively) that are the basis for a construction of counterexamples to the bounded Hirsch conjecture, as well as the $4$-dimensional polytope $M_4$ from \cite{ToddExample} underlying the original counterexample for the monotone Hirsch conjecture. Each of these polytopes is a so-called \emph{spindle}. A spindle is the intersection of two pointed cones emanating from two \emph{apices} $\mathbf{u}$ and $\mathbf{v}$ such that each apex is in the interior of the opposite cone.

Our first main contribution is a proof that Todd's counterexample $M_{4}$ to the monotone Hirsch conjecture is not a counterexample in the circuit setting. In Section \ref{sec:Todd}, we prove the following.

\begin{theorem*}
The monotone circuit diameter of $M_{4}$ is $4$.
\end{theorem*}

Todd showed for the orientation of the graph of $M_{4}$ induced by minimizing $(1,1,1,1)^{\top} \mathbf{x}$ that the worst-case combinatorial distance to the unique sink is $5$. In \cref{sec:Todd}, we study all $7112$ orientations of the graph of $M_{4}$ induced by linear objective functions and find that five orientations can be used to contradict the monotone Hirsch bound for the combinatorial diameter. We observe that these orientations all have a unique sink at $\mathbf{0}$ and leverage this observation for our arguments. Specifically, for any linear objective function minimized at $\mathbf{0}$, we exhibit the existence of a monotone circuit walk of length $4$. We do so in two more general settings. In Section \ref{sec:shortcirc}, we use sign-compatible walks to find short monotone circuit walks on a large class of spindles. In Section \ref{sec:antiblock}, we show that the Todd polytope is anti-blocking and argue the existence of short monotone circuit walks on anti-blocking polytopes. 

Our second main contribution is on Santos' original counterexamples to the bounded Hirsch conjecture. A key notion for Santos' arguments is the {\em length} of a spindle, the combinatorial distance between the apices. We will bound the {\em circuit length}, referring to the maximum length of a shortest circuit walk from one apex of the spindle to the other one. 

At the core of Santos' disproof of the bounded Hirsch conjecture in \cite{s-11} is the following observation:
from a $d$-dimensional degenerate spindle with $f$ facets and length greater than $d$, one can obtain a $(d{+}1)$-dimensional spindle with $f+1$ facets which has length greater than $d+1$. By doing this $f-2d$ times, one obtains an $(f{-}d)$-dimensional spindle with $2f-2d$ facets whose length exceeds $f-d$. Santos gave a highly degenerate $5$-dimensional spindle $S^{48}_5$ with $48$ facets and length $6$. He then concluded via his iterative construction that there is a $43$-dimensional (bounded) Hirsch counterexample, namely a spindle with $86$ facets and length at least $44$. In a follow-up to Santos' work, Matschke, Santos, and Weibel \cite{msw-15} found two spindles, $S^{28}_5$ and $S^{25}_5$, also of dimension $5$ and length $6$ but with fewer facets. These lead to counterexamples in lower dimensions $23$ and $20$, respectively. In \cref{sec:Santos}, we consider Santos' original spindle $S^{48}_5$ from \cite{s-11} as well as the two smaller ones $S^{28}_5$ and $S^{25}_5$ from \cite{msw-15} and prove the following: 

\begin{theorem*}
The circuit length of all three spindles $S^{48}_5$, $S^{28}_5$, and $S^{25}_5$ is at most $5$. The same bounds also hold for all realizations resulting from mild perturbations.
\end{theorem*}

This implies that Santos' construction applied to any of the three spindles, or mild perturbations thereof, does not lead to a counterexample in the circuit setting. To prove this result in \cref{sec:Santos-5}, we use the geometry of certain $2$-faces. Interestingly, our proof exhibits that for all three spindles the apices can be connected by circuit walks of length at most $5$ with no more than two non-edge steps. With the same arguments, in \cref{sec:Santos-highdim}, we are also able to verify computationally that the circuit length of the two explicit (high-dimensional) Hirsch counterexamples from \cite{msw-15}, obtained from carrying out the steps of Santos' construction starting with $S^{28}_5$ and $S^{25}_5$, is indeed at most their dimension. Thus, not only does the construction principle not transfer, but we are also able to see directly for the two devised high-dimensional polytopes that they do not disprove the circuit diameter conjecture through their circuit length. 

One of the main challenges in studying circuit diameters is the fact that the circuit diameter of a polyhedron, unlike the combinatorial diameter, depends on its geometry. In particular, there can be two realizations of the same polyhedron with different circuit diameters (see, for example, \cite{sy-15b}). In \cref{sec:Santos-5}, we provide sufficient conditions for the circuit length to be at most the dimension. We exhibit that these conditions are satisfied for the specific realizations of the spindles in the literature and mild perturbations. It will remain open whether {\em all} realizations of these polytopes satisfy these conditions, or more generally whether all realizations have a circuit length bounded by the dimension. 

Our analysis in \cref{sec:Santos} is restricted to the particular spindles used by Matschke, Santos, and Weibel to build counterexamples to the Hirsch conjecture. The circuit diameter conjecture for spindles remains interesting. 

\begin{oq}
\label{oq:circlength}
Does there exist a $d$-dimensional spindle with circuit length at least $d+1$?
\end{oq}
 
 Conjecture 3.8 of \cite{bsy-18} asks the same question for simple spindles. In Section \ref{sec:shortcirc}, we provide some conditions under which we can always find a circuit walk of length $d$ between the apices of a spindle, so there are some positive results in this direction. It is open whether Conjecture $3.8$ from \cite{bsy-18} or a negative answer to Open Question \ref{oq:circlength} implies the circuit diameter conjecture, but we believe that both implications hold. 

\section{Monotone Hirsch Counterexamples}\label{sec:monotone}

In this section, we show that Todd's counterexample to the monotone Hirsch conjecture \cite{ToddExample} does not transfer to the circuit setting.

In Section \ref{sec:shortcirc}, we identify a close connection of sign-compatible and monotone walks on spindles for objective functions uniquely minimized at $\mathbf{0}$. We then exploit this connection in Section \ref{sec:Todd} to prove the monotone circuit diameter of $M_{4}$ is precisely $4$. In Section \ref{sec:antiblock}, we highlight an alternative avenue to the disproof by observing that $M_4$ is an anti-blocking polytope. 

\subsection{Short Circuit Walks on Spindles}\label{sec:shortcirc} 

We start by finding conditions that guarantee the existence of \emph{sign-compatible} circuit walks on spindles. Two vectors $\mathbf{x},\mathbf{y} \in \R^d$ are \emph{sign-compatible} if $\mathbf{x}_i \mathbf{y}_i \ge 0$ for all $i=1,\dots,d$. Let $P = \{ \mathbf{x} \in \R^d \colon B \mathbf{x} \le \mathbf{d} \}$ be a polyhedron for a matrix $B \in \R^{m \times d}$ with rows $\mathbf{b}_i \in \R^d$ for $i=1,\dots,m$.
We call a circuit walk on $P$ with steps $\mathbf{g}_j$ (and maximal step lengths $\alpha_j$) for $j=1,\dots,k$ a \emph{sign-compatible circuit walk} if all $B \mathbf{g}_j$ for $j=1,\dots,k$ are pairwise sign-compatible. 
Such walks are special cases of conformal sums of circuits, which correspond to sign-compatible circuit walks without the maximal step requirement in \cref{def:circuitwalk}\cref{def:circuitwalk-3}. In this weaker setting, the well-known \emph{conformal sum property} \cite{bk-84,r-69,z-95} guarantees that for any given pair of vertices $\mathbf{u},\mathbf{v}$ of a $d$-dimensional polyhedron, their difference $\mathbf{v}-\mathbf{u}$ can be written as a conformal sum of at most $d$ circuits. This contrasts with the situation for sign-compatible circuit walks (with maximal steps), which may not exist at all \cite{bdf-16}. When they do, however, their length is at most $d$: note that once a sign-compatible walk enters a new facet, it may not leave it again. Since each step of the walk is maximal and must therefore enter a new facet, the number of steps is at most $d$, and thus satisfies the bound $f-d\geq d$ for a spindle. 

Moreover, sign-compatible circuit walks are monotone for any linear objective function that is uniquely minimized at the ending vertex of the walk. To see this, let $P$ be a polyhedron given by $P = \{ \mathbf{x} \in \R^d \colon B \mathbf{x} \le \mathbf{d} \}$. For the sake of simplicity, we only consider full-dimensional polyhedra here. We define the hyperplane arrangement $\mathcal{H}(B) = \bigcup_{i=1}^{m} \{\mathbf{x} \in \R^d \colon \mathbf{b}_i^\top \mathbf{x} = 0\}$. Following \cite{bv-17}, we call it the {\em elementary arrangement} of $P$. 
Since the set of circuits of $P$ consists precisely of the (normalized) directions of the extreme rays of $\mathcal{H}(B)$ (see \cite{bv-17}), we obtain the following equivalent characterization of sign-compatibility:

\begin{lemma} \label{prop:sign-comp-char}
Let $P = \{ \mathbf{x} \in \R^d \colon B \mathbf{x} \le \mathbf{d} \}$ be a polyhedron in $\R^d$ and let $\mathbf{u}$ and $\mathbf{v}$ be two vertices of $P$. Denote by $C$ the minimal face of the elementary arrangement $\mathcal{H}(B)$ containing $\mathbf{v}-\mathbf{u}$.
For a circuit walk $\mathbf{u} = \mathbf{y}_0,\dots,\mathbf{y}_k = \mathbf{v}$ from $\mathbf{u}$ to $\mathbf{v}$ in $P$ with steps $\mathbf{g}_j \in \mathcal{C}(B)$ for $j=1,\dots,k$, the following statements are equivalent:
\begin{myenumerate}
    \item The walk $\mathbf{y}_0,\dots,\mathbf{y}_k$ is sign-compatible. \label{prop:sign-comp-char-1}
    \item $\mathbf{g}_j \in C$ for all $j=1,\dots,k$. \label{prop:sign-comp-char-2}
    \item The walk $\mathbf{y}_0,\dots,\mathbf{y}_k$ is monotone for any linear objective function $\mathbf{c} \in \R^d$ uniquely minimized over $-C$ at the origin $\mathbf{0}$.  \label{prop:sign-comp-char-3}
\end{myenumerate}
\end{lemma}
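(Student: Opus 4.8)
The plan is to prove the cycle of implications $\ref{prop:sign-comp-char-1} \Leftrightarrow \ref{prop:sign-comp-char-2}$ and $\ref{prop:sign-comp-char-2} \Leftrightarrow \ref{prop:sign-comp-char-3}$, working throughout with the sign-vector description of the elementary arrangement $\mathcal{H}(B)$. Recall that each face of $\mathcal{H}(B)$ is determined by a sign vector $\sigma \in \{-,0,+\}^m$ via $\{\mathbf{x} : \operatorname{sign}(\mathbf{b}_i^\top \mathbf{x}) = \sigma_i \text{ for all } i\}$, and that $C$, the minimal face containing $\mathbf{v}-\mathbf{u}$, corresponds to the sign vector $\sigma^\ast := \operatorname{sign}(B(\mathbf{v}-\mathbf{u}))$; a point $\mathbf{z}$ lies in (the closure of) $C$ precisely when $\operatorname{sign}(\mathbf{b}_i^\top \mathbf{z}) \in \{0, \sigma^\ast_i\}$ for every $i$, i.e.\ when $B\mathbf{z}$ is sign-compatible with $B(\mathbf{v}-\mathbf{u})$ and has support contained in that of $B(\mathbf{v}-\mathbf{u})$.

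For $\ref{prop:sign-comp-char-1} \Rightarrow \ref{prop:sign-comp-char-2}$: if the walk is sign-compatible, then all the $B\mathbf{g}_j$ are pairwise sign-compatible, hence sign-compatible with their positive combination $\sum_j \alpha_j B\mathbf{g}_j = B(\mathbf{v}-\mathbf{u})$, and moreover the support of each $B\mathbf{g}_j$ is contained in that of this sum (no cancellation occurs among sign-compatible vectors). By the characterization of the previous paragraph this is exactly the statement $\mathbf{g}_j \in \overline{C}$; since $C$ is the \emph{minimal} face containing $\mathbf{v}-\mathbf{u}$ and the $\mathbf{g}_j$ all lie in $\overline{C}$, one checks that they actually lie in $C$ itself (their combination lies in the relative interior, forcing each to have exactly the sign pattern $\sigma^\ast$ on the coordinates where $\sigma^\ast$ is nonzero — here I should be slightly careful, as the cleanest route is simply to take $C$ to mean the relatively open minimal face and note $\mathbf{g}_j \in C$ iff $\operatorname{sign}(B\mathbf{g}_j) = \sigma^\ast$; I will state the lemma's $C$ consistently with that reading). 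For $\ref{prop:sign-comp-char-2} \Rightarrow \ref{prop:sign-comp-char-1}$: if every $\mathbf{g}_j \in C$ then every $B\mathbf{g}_j$ has the common sign vector $\sigma^\ast$, so any two of them are sign-compatible, which is the definition of a sign-compatible walk.

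For $\ref{prop:sign-comp-char-2} \Leftrightarrow \ref{prop:sign-comp-char-3}$: suppose $\mathbf{g}_j \in C$ for all $j$. Let $\mathbf{c}$ be uniquely minimized over $-C$ at $\mathbf{0}$; then $\mathbf{c}^\top \mathbf{w} < 0$ for every $\mathbf{w} \in -C \setminus \{\mathbf{0}\}$, equivalently $\mathbf{c}^\top \mathbf{w} > 0$ for every $\mathbf{w} \in C$. Since each step satisfies $\mathbf{c}^\top(\mathbf{y}_j - \mathbf{y}_{j-1}) = \alpha_j \mathbf{c}^\top \mathbf{g}_j > 0$ with $\alpha_j > 0$ and $\mathbf{g}_j \in C$, the sequence $(\mathbf{c}^\top \mathbf{y}_i)$ is strictly increasing; running the objective as written (minimization toward the endpoint, i.e.\ replacing $\mathbf{c}$ by $-\mathbf{c}$ in the monotonicity convention, or equivalently noting $\mathbf{c}$ is minimized at $\mathbf{y}_0$ — I will align signs with the paper's convention that ``monotone'' means $(\mathbf{c}^\top \mathbf{y}_i)$ decreasing and that $\mathbf{c}$ is minimized at the \emph{ending} vertex) gives monotonicity. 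Conversely, assume $\ref{prop:sign-comp-char-2}$ fails, so some $\mathbf{g}_j \notin C$. Then I need to produce an objective $\mathbf{c}$ uniquely minimized over $-C$ at $\mathbf{0}$ for which the walk is \emph{not} monotone — this is the one place requiring a small construction: I would pick $\mathbf{c}$ in the relative interior of the normal cone to $-C$ at $\mathbf{0}$ (which is nonempty and guarantees unique minimization over $-C$ at $\mathbf{0}$), perturbing if necessary so that $\mathbf{c}^\top \mathbf{g}_j$ has the ``wrong'' sign or vanishes, contradicting strict monotonicity.

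The main obstacle is precisely this last direction: exhibiting a witnessing objective function for $\neg\ref{prop:sign-comp-char-2} \Rightarrow \neg\ref{prop:sign-comp-char-3}$ while respecting the ``uniquely minimized over $-C$'' requirement. The key fact making it work is that the set of $\mathbf{c}$ uniquely minimized over $-C$ at $\mathbf{0}$ is a full-dimensional (relatively open) cone — the relative interior of the polar of the linear span considerations aside, it is the interior of the normal cone — so there is enough freedom to tune the sign of $\mathbf{c}^\top \mathbf{g}_j$ for a step direction $\mathbf{g}_j$ that escapes $C$. Everything else reduces to the standard dictionary between faces of a central hyperplane arrangement and sign vectors, together with the no-cancellation property of sign-compatible (conformal) sums.
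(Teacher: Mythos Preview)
Your equivalence (i) $\Leftrightarrow$ (ii) matches the paper's, and the open/closed worry you flag dissolves once $C$ is taken to be the \emph{closed} minimal face (as the paper does, calling it a polyhedral cone): your no-cancellation argument then shows exactly that each $\mathbf{g}_j$ lands in this closed cone, and conversely membership in the closed cone forces pairwise sign-compatibility. There is, however, a sign slip in your (ii) $\Rightarrow$ (iii): if $\mathbf{c}$ is uniquely \emph{minimized} over $-C$ at $\mathbf{0}$, then $\mathbf{c}^\top\mathbf{w} > 0$ for all $\mathbf{w} \in -C \setminus \{\mathbf{0}\}$ (not $<0$), hence $\mathbf{c}^\top\mathbf{w} < 0$ on $C \setminus \{\mathbf{0}\}$. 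With this correction the sequence $(\mathbf{c}^\top\mathbf{y}_i)$ is directly strictly decreasing, and your subsequent sign-flipping contortions become unnecessary.

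For (iii) $\Rightarrow$ (ii) your route differs from the paper's. You propose constructing a single witnessing objective by perturbation, and rightly flag this as the delicate step. The paper instead invokes polar duality in one line: the set of $\mathbf{c}$ uniquely minimized at $\mathbf{0}$ over $-C$ is (the relative interior of) the polar cone $C^\circ$, and since polar duality is an involution, a nonzero $\mathbf{x}$ lies in $C$ if and only if $\mathbf{c}^\top\mathbf{x} < 0$ for every such $\mathbf{c}$. So if the walk is monotone for \emph{all} such $\mathbf{c}$, each step $\mathbf{g}_j$ is forced into $C$. Your perturbation construction can be completed, but the mechanism that makes it work \emph{is} bipolar duality: $\mathbf{g}_j \notin C$ yields some $\mathbf{c}_0 \in C^\circ$ with $\mathbf{c}_0^\top\mathbf{g}_j > 0$, and a convex combination of $\mathbf{c}_0$ with any relative-interior point of $C^\circ$ gives a witness in the interior with $\mathbf{c}^\top\mathbf{g}_j \ge 0$. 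The two routes therefore converge; the paper's is simply more direct because it names the duality rather than building the witness by hand.
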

\begin{proof}
To see that \cref{prop:sign-comp-char-1} and \cref{prop:sign-comp-char-2} are equivalent, note that the set of all vectors in $\R^d$ whose products with $B$ are pairwise sign-compatible and sign-compatible with $B(\mathbf{v}-\mathbf{u})$ is a polyhedral cone (see \cite{bdf-16}) and coincides with the minimal face of $\mathcal{H}(B)$ containing $\mathbf{v}-\mathbf{u}$.

Let us now prove the equivalence of \cref{prop:sign-comp-char-2} and \cref{prop:sign-comp-char-3}. 
From standard polyhedral theory, $\mathbf{c}$ is uniquely minimized over $-C$ at $\mathbf{0}$ if and only if $-\mathbf{c}$ is in the relative interior of the polar cone of $-C$, which means that $\mathbf{c}^\top\mathbf{x} < 0$ for all $\mathbf{x} \in C \setminus \{\mathbf{0}\}$.  Hence, if $\mathbf{g}_{j} \in C$ for all $j = 1, \dots, k$ then $\mathbf{c}^{\top} \mathbf{g}_{j} < 0$ for all such $j$, meaning that the path is monotone for all choices of $\mathbf{c}$. Thus, \cref{prop:sign-comp-char-2} implies \cref{prop:sign-comp-char-3}. For the other direction, recall that polar duality is an involution, so $\mathbf{x} \in C$ if and only if, for all $\mathbf{c}$ uniquely minimized at $\mathbf{0}$ on $-C$, $\mathbf{c}^\top \mathbf{x} < 0$. Thus, if the walk is monotone for all linear objective functions uniquely minimized over $-C$ at $\mathbf{0}$, each step $\mathbf{g}_{j}$ must be contained in $C$, and so \cref{prop:sign-comp-char-3} implies \cref{prop:sign-comp-char-2}.  
\qed
\end{proof}

A key ingredient for our proofs is that we are able to guarantee the existence of sign-compatible circuit walks on spindles satisfying some restrictions. Recall that a spindle is the intersection of two pointed cones $C_{1}$ and $C_{2}$ such that the unique vertices $\mathbf{u}$ and $\mathbf{v}$ of $C_{1}$ and $C_{2}$ are contained in the interior of $C_{2}$ and $C_{1}$, respectively. We make the following observation: one can always find sign-compatible circuit walks between the two apices of a spindle formed by repeating the same cone twice.

\begin{lemma} \label{lem:spindle-same-cone} 
Let $P \subset \R^d$ be a spindle with apices $\mathbf{u}$ and $\mathbf{v}$, given by $(C + \mathbf{u}) \cap (-C + \mathbf{v})$ for a pointed cone $C$ with a unique vertex at the origin $\mathbf{0}$. Then there is a sign-compatible circuit walk of length at most $d$ from $\mathbf{u}$ to $\mathbf{v}$ in $P$.
\end{lemma}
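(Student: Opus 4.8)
The plan is to exploit the symmetry of the spindle $P = (C + \mathbf{u}) \cap (-C + \mathbf{v})$ by working in coordinates centered at the midpoint $\mathbf{m} = \tfrac12(\mathbf{u}+\mathbf{v})$. After this translation, $\mathbf{u}$ becomes $-\mathbf{w}$ and $\mathbf{v}$ becomes $\mathbf{w}$ where $\mathbf{w} = \tfrac12(\mathbf{v}-\mathbf{u})$, and the spindle is centrally symmetric about the origin: $P - \mathbf{m} = (C - \mathbf{w}) \cap (-C + \mathbf{w})$, which is invariant under $\mathbf{x} \mapsto -\mathbf{x}$. The key structural fact I want to extract is that the direction $\mathbf{w}$ lies in the interior of $C$ (since $\mathbf{v}$ is in the strict interior of the cone $C + \mathbf{u}$ by the spindle definition), so $-\mathbf{w}$ lies in the interior of $-C$, and the segment from $-\mathbf{w}$ to $\mathbf{w}$ passes through $\mathbf{0} \in P - \mathbf{m}$. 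In particular, $\mathbf{0}$ is a point of $P - \mathbf{m}$ on the line through the two apices.

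**The walk via the center.**

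My first move would be to take a single circuit step from $\mathbf{u}$ in the direction $\mathbf{w}$ (equivalently $\mathbf{v} - \mathbf{u}$). Since $\mathbf{w}$ is in the interior of the cone $C$, which contains a lineality-free set of edge directions at $\mathbf{u}$, the difference vector $\mathbf{v}-\mathbf{u}$ need not itself be a circuit, so I cannot simply step straight to $\mathbf{v}$. Instead, I would appeal to \Cref{lem:spindle-same-cone}'s companion \Cref{prop:sign-comp-char}: it suffices to produce \emph{any} circuit walk from $\mathbf{u}$ to $\mathbf{v}$ all of whose steps lie in the minimal face $C'$ of the elementary arrangement $\mathcal{H}(B)$ containing $\mathbf{v}-\mathbf{u}$, and then sign-compatibility (hence the length bound $d$ from the discussion preceding the lemma) follows for free. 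So the real task reduces to: find a circuit walk from $\mathbf{u}$ to $\mathbf{v}$ staying inside the (relatively open) region of the elementary arrangement that $\mathbf{v}-\mathbf{u}$ belongs to.

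**Carrying this out.**

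To do that, I would argue that inside the spindle the conformal decomposition of $\mathbf{v}-\mathbf{u}$ into circuits can be realized by \emph{maximal} steps because of the spindle geometry. Concretely: pick any circuit $\mathbf{g}_1 \in C'$ that is an edge direction at $\mathbf{u}$ conformal to $\mathbf{v}-\mathbf{u}$ (one exists since $\mathbf{u}$ is a vertex and $\mathbf{v}-\mathbf{u}$ points into $P$), take the maximal step; the maximality forces us onto a new facet, and because the facets through $\mathbf{u}$ are exactly those of the cone $C+\mathbf{u}$ while $\mathbf{v}$ is interior to that cone, the facet we hit is one of the "$-C+\mathbf{v}$" facets. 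I would then repeat: at each new vertex $\mathbf{y}_i$, the residual vector $\mathbf{v}-\mathbf{y}_i$ still lies in $C'$ (since $C'$ is a cone and we only removed conformal pieces, using the argument in the proof of \Cref{prop:sign-comp-char} that the sign-compatible cone equals the minimal face), and we can again pick a conformal circuit edge step; each maximal step enters a genuinely new facet and never leaves it, bounding the number of steps by the number of facets minus the number already tight, hence by $d$. The one point requiring care — and I expect this to be the main obstacle — is verifying that a conformal circuit edge direction is always available at the intermediate vertices $\mathbf{y}_i$ and that the maximal step from $\mathbf{y}_i$ in that direction does not overshoot $\mathbf{v}$; here the central symmetry about $\mathbf{m}$ and the fact that $\mathbf{v}$ is the apex opposite to the cone $C$ (so the only facets "blocking" forward progress are the far ones, which we approach monotonically) should close the gap, but making the "never overshoot, always progress, terminate at $\mathbf{v}$" argument airtight is the crux.
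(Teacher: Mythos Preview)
Your high-level plan matches the paper's: take all steps along extreme rays of $C$, which is the region of the elementary arrangement containing $\mathbf{v}-\mathbf{u}$, and then sign-compatibility and the length bound $d$ follow from \Cref{prop:sign-comp-char}. But the construction itself has a genuine gap, which you yourself flag as ``the crux'' without resolving. The greedy scheme ``pick a conformal circuit, step maximally, repeat'' comes with no argument that it terminates at $\mathbf{v}$; entering a new facet at every step only bounds the length \emph{once you know the endpoint is $\mathbf{v}$}. Moreover, after the first step the intermediate points $\mathbf{y}_i$ need not be vertices of $P$, so speaking of ``edge directions at $\mathbf{y}_i$'' is not meaningful. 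The central-symmetry observation about the midpoint is correct but is never actually deployed and does not help with termination.

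The paper closes exactly this gap by induction on $d$. After one edge step from $\mathbf{u}$ along any extreme ray of $C$ to an adjacent vertex $\mathbf{u}'$, one lands on a facet $F = P \cap (-C'+\mathbf{v})$ for some facet $C'$ of $C$. The key structural observation, which your proposal never reaches, is that $P' = (C'+\mathbf{u}')\cap(-C'+\mathbf{v})$ is a $(d{-}1)$-dimensional spindle of the \emph{same form}, contained in $F$. Induction then yields a sign-compatible walk of length at most $d-1$ from $\mathbf{u}'$ to $\mathbf{v}$ in $P'$ using extreme rays of $C' \subset C$; since each such step is stopped by a facet of $-C'+\mathbf{v}$, which is already a facet of $-C+\mathbf{v}$ and hence of $P$, maximality in $P'$ coincides with maximality in $P$. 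This recursive spindle-inside-a-facet structure is what converts the conformal-sum intuition into an actual maximal-step circuit walk that provably ends at $\mathbf{v}$.
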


\begin{proof}
We show a slightly stronger statement: we argue that for any point $\mathbf{x} \in P$ with $\mathbf{x} \neq \mathbf{v}$, we can follow a single circuit step of maximal length that makes a new facet-defining inequality of $P$ containing $\mathbf{v}$ tight, while ensuring that all facet-defining inequalities tight at both $\mathbf{x}$ and $\mathbf{v}$ remain tight. Furthermore, the direction of the circuit step is always one of the extreme rays of $C$. Since $C$ is a region of the elementary arrangement of $P$ and $\mathbf{v}-\mathbf{u}$ is in the interior of $C$, any circuit walk from $\mathbf{u}$ to $\mathbf{v}$ that only walks along directions of extreme rays of $C$ will be sign-compatible by \cref{prop:sign-comp-char}. Hence, this strategy suffices to prove the statement.

To do so, we construct a smaller spindle inside $P$ containing $\mathbf{x}$ as an apex. Consider $C + \mathbf{x}$. Note that $\mathbf{x} \in P \subseteq -C + \mathbf{v}$, so $\mathbf{x} = \mathbf{v} - \mathbf{y}$ for some $\mathbf{y} \in C$, which means that $\mathbf{v} \in \mathbf{x} + C$. Hence, $\mathbf{x}, \mathbf{v} \in (C+ \mathbf{x}) \cap (-C + \mathbf{v})$. Let $C'$ be the minimal face of $C$ such that $-C' + \mathbf{v}$ contains $\mathbf{x}$. Then $\mathbf{x}$ is on the relative interior of $-C' + \mathbf{v}$. It follows that $\mathbf{v}$ is also on the relative interior of $C' + \mathbf{x}$. 

Now consider $P' = (C'+\mathbf{x}) \cap (-C' +\mathbf{v})$ and note that $P'$ is a spindle where the same face $C'$ of $C$ is repeated twice. Let $\mathbf{y}$ be any vertex adjacent to $\mathbf{x}$ on $P'$. By construction, all inequalities of $P$ tight at both $\mathbf{v}$ and $\mathbf{x}$ must also be tight at $\mathbf{y}$. Further, the facet-defining inequalities for $P'$ are tight at exactly one of $\mathbf{x}$ and $\mathbf{v}$. Thus, one of the facet-defining inequalities tight at $\mathbf{v}$ and not $\mathbf{x}$ must be tight at $\mathbf{y}$. The inequalities tight at $\mathbf{v}$ in $P'$ are a subset of the inequalities tight at $\mathbf{v}$ in $P$. Hence, the step from $\mathbf{x}$ to $\mathbf{y}$ must be a circuit step of maximal step length in $P$. Furthermore, the direction of the step is an extreme ray of $C'$ and therefore an extreme ray of $C$. \qed 
\end{proof}

Lemma \ref{lem:spindle-same-cone} implies that any spindle with the same feasible cone at each apex will never yield a negative answer to Open Question \ref{oq:circlength}. As a generalization of this observation, we may find a short circuit walk in spindles where one of the feasible cones is contained in the other. 

\begin{theorem} \label{thm:spindle-containment}
Let $P \subset \R^d$ be a spindle with apices $\mathbf{u}$ and $\mathbf{v}$, given by $(C + \mathbf{u}) \cap (-D + \mathbf{v})$ for two pointed cones $C$ and $D$  with a unique vertex at the origin $\mathbf{0}$. Suppose that $D \subseteq C$. Then there is a sign-compatible circuit walk of length at most $d$ from $\mathbf{u}$ to $\mathbf{v}$ in $P$.
For any linear objective function $\mathbf{c} \in \R^d$ uniquely minimized over $P$ at $\mathbf{v}$, this walk is monotone.
\end{theorem}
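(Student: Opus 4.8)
The plan is to reduce the claim to \cref{lem:spindle-same-cone} by fitting a ``same-cone'' spindle inside $P$. Since $D \subseteq C$ we have $D + \mathbf{u} \subseteq C + \mathbf{u}$, so the spindle $Q := (D + \mathbf{u}) \cap (-D + \mathbf{v})$ satisfies $Q \subseteq P$. The first step is to verify that $Q$ is itself a spindle with apices $\mathbf{u}$ and $\mathbf{v}$ of the form required by \cref{lem:spindle-same-cone}: since $P$ is a spindle, its apex $\mathbf{u}$ lies in the interior of the opposite cone $-D + \mathbf{v}$, which is equivalent to $\mathbf{v} - \mathbf{u}$ lying in the interior of $D$, hence to $\mathbf{v}$ lying in the interior of $D + \mathbf{u}$. \cref{lem:spindle-same-cone} then produces a sign-compatible circuit walk $\mathbf{u} = \mathbf{y}_0, \dots, \mathbf{y}_k = \mathbf{v}$ in $Q$ with steps $\mathbf{g}_1, \dots, \mathbf{g}_k$, maximal step lengths $\alpha_j > 0$, and $k \le d$. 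The heart of the argument is to show that this same sequence of points and directions is a sign-compatible circuit walk in $P$.

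The key observation is that the minimal face of the elementary arrangement of $P$ containing $\mathbf{v} - \mathbf{u}$ is exactly the cone $D$. Indeed, because $P$ is a spindle, $\mathbf{v} - \mathbf{u}$ lies in the interior of $C$ (as $\mathbf{v} \in \operatorname{int}(C + \mathbf{u})$) and in the interior of $D$ (as $\mathbf{u} \in \operatorname{int}(-D + \mathbf{v})$), hence it is strictly on one side of every hyperplane of the arrangement, and the chamber it determines has closure $C \cap D = D$; the same computation applies to $Q$. By \cref{prop:sign-comp-char} applied to the walk in $Q$, every step direction therefore satisfies $\mathbf{g}_j \in D$, and in particular $\mathbf{g}_j \in C$. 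I would then check the three conditions of \cref{def:circuitwalk} for $P$. Feasibility $\mathbf{y}_i \in P$ is immediate from $Q \subseteq P$. Each $\mathbf{g}_j$ is a circuit of $P$ because the elementary arrangement of $P$ is a refinement of that of $Q$ --- it keeps the hyperplanes coming from $-D + \mathbf{v}$ and adds the ones coming from the facets of $C$ --- and no central hyperplane can subdivide a one-dimensional face of a central arrangement, so every extreme ray of the arrangement of $Q$ is still an extreme ray of the arrangement of $P$; hence $\mathbf{g}_j$ remains a circuit.

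I expect the maximal-step condition \cref{def:circuitwalk}\cref{def:circuitwalk-3} to be the main obstacle, since a step maximal in $Q$ could a priori be prolonged in the larger polytope $P$. The point is that $\mathbf{g}_j \in D \subseteq C$ gives $\mathbf{b}^\top \mathbf{g}_j \le 0$ for every outer facet normal $\mathbf{b}$ of $D$ and $\mathbf{a}^\top \mathbf{g}_j \le 0$ for every facet normal $\mathbf{a}$ of $C$; consequently, along the ray $\mathbf{y}_{j-1} + \alpha \mathbf{g}_j$, $\alpha \ge 0$, no inequality emanating from the cone at the apex $\mathbf{u}$ --- whether it is $D + \mathbf{u}$ in $Q$ or $C + \mathbf{u}$ in $P$ --- is ever driven toward violation, and starting from a feasible point all of these inequalities remain satisfied for all $\alpha \ge 0$. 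Thus in both $Q$ and $P$ the maximal step length from $\mathbf{y}_{j-1}$ along $\mathbf{g}_j$ is controlled only by the facet inequalities of $-D + \mathbf{v}$, which $Q$ and $P$ share verbatim; so these two maximal lengths agree and $\mathbf{y}_j = \mathbf{y}_{j-1} + \alpha_j \mathbf{g}_j$ is a maximal step in $P$ as well. Hence $\mathbf{y}_0, \dots, \mathbf{y}_k$ is a circuit walk of length at most $d$ in $P$, and since its steps all lie in $D$, the minimal face of the elementary arrangement of $P$ containing $\mathbf{v} - \mathbf{u}$, \cref{prop:sign-comp-char} certifies that it is sign-compatible. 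Finally, if $\mathbf{c}$ is uniquely minimized over $P$ at $\mathbf{v}$, then, because the feasible cone of $P$ at $\mathbf{v}$ equals $-D + \mathbf{v}$, it is uniquely minimized over $-D$ at $\mathbf{0}$, and the monotonicity of the walk for such $\mathbf{c}$ is exactly the implication \cref{prop:sign-comp-char-2} $\Rightarrow$ \cref{prop:sign-comp-char-3} of \cref{prop:sign-comp-char}.
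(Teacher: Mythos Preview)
Your proof is correct and follows essentially the same route as the paper: you introduce the same auxiliary spindle $Q = (D+\mathbf{u}) \cap (-D+\mathbf{v}) \subseteq P$, invoke \cref{lem:spindle-same-cone} on it, observe via \cref{prop:sign-comp-char} that every step direction lies in $D$, and then argue that maximality in $Q$ transfers to $P$ because only the shared $-D+\mathbf{v}$ facets can bound the step lengths. You are somewhat more explicit than the paper in justifying that circuits of $Q$ remain circuits of $P$ (via the refinement of arrangements), which the paper leaves implicit, but the argument is the same.
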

\begin{proof}
Define $P' = (D + \mathbf{u}) \cap (-D + \mathbf{v})$. Since $P$ is a spindle, $\mathbf{v}-\mathbf{u}$ is in the interior of $C \cap D = D$ by hypothesis. It follows then that $P'$ is a spindle with apices $\mathbf{u}$ and $\mathbf{v}$. Since $D \subseteq C$, we further have that $P' \subseteq P$. By \cref{lem:spindle-same-cone}, there is a sign-compatible circuit walk of length at most $d$ steps from $\mathbf{u}$ to $\mathbf{v}$ in $P'$. Each step walks along the direction of some extreme ray of $D$ by \cref{prop:sign-comp-char}. Therefore, the only facet-defining inequalities of $P'$ that can become tight at each step are facet-defining inequalities for the cone $-D + \mathbf{v}$, which are facet-defining for $P$ as well. So each step is maximal in $P$ and the walk is therefore a circuit walk in $P$.

Sign-compatibility for $P$ follows from \cref{prop:sign-comp-char} by noting that, since $D \subseteq C$, $D$ is a region of the elementary arrangement which contains $\mathbf{v}-\mathbf{u}$ in its interior. For monotonicity, note that any linear objective function $\mathbf{c}$ for which $\mathbf{v}$ is the unique minimizer over $P$ is uniquely minimized at $\mathbf{0}$ over $-D$, since $-D$ is the feasible cone at $\mathbf{v}$. By \cref{prop:sign-comp-char}, the walk must therefore be monotone for any such $\mathbf{c}$. \qed
\end{proof}

Note that it is a fundamental assumption for our proof that we walk from the vertex $\mathbf{u}$ of the larger cone to the vertex $\mathbf{v}$ of the smaller cone. One may naturally consider whether there is a sign-compatible circuit walk going in the other direction from $\mathbf{v}$ to $\mathbf{u}$. It turns out that if such a circuit walk always existed, then we could bound the circuit length for all spindles. 

\begin{remark}
Suppose that for any spindle $(C+\mathbf{u}) \cap (-D + \mathbf{v})\subseteq \R^d$ such that $D \subseteq C$, there is a sign-compatible circuit walk from $\mathbf{v}$ to $\mathbf{u}$. Then any spindle has circuit length at most $d$.
\end{remark}

\begin{proof}  Consider a spindle $(C_{1} + \mathbf{u}) \cap (-C_{2} + \mathbf{v})$. Observe that, since $\mathbf{v}$ is on the interior of $C_{1} + \mathbf{u}$, $\mathbf{u}$ is on the interior of $-C_{1} + \mathbf{v}$. It follows that $\mathbf{u}$ is on the interior of $(-C_{1} + \mathbf{v}) \cap (-C_{2} + \mathbf{v}) = -(C_{1} \cap C_{2}) + \mathbf{v}$. Let $C = C_{1}$ and $D = C_{1} \cap C_{2}$. Then $D \subseteq C$, so by hypothesis, there exists a sign-compatible circuit walk of length at most $d$ from $\mathbf{v}$ to $\mathbf{u}$ in $(C + \mathbf{u}) \cap (-D + \mathbf{v})$. Note that the set of circuits of $(C + \mathbf{u}) \cap (-D+\mathbf{v})$ is a subset of the set of circuits of $(C_{1}+\mathbf{u}) \cap (-C_{2}+\mathbf{v})$, by construction. That sign-compatible circuit walk remains a circuit walk in $(C_{1} + \mathbf{u}) \cap (-C_{2} + \mathbf{v})$, since at each step the walk must enter a new facet of $C + \mathbf{u} = C_{1} + \mathbf{u}$ and can never return to a facet of $-D + \mathbf{v}=-(C_{1} \cap C_{2}) + \mathbf{v}\subseteq (-C_{2} + \mathbf{v})$. \qed
 \end{proof}

\subsection{Todd's Monotone Hirsch Counterexample} \label{sec:Todd}

It remains to apply the results of Section \ref{sec:shortcirc} to the polytope $M_4$ used in the disproof of the monotone Hirsch conjecture. 

The Todd polytope $M_{4}$ is given by $M_{4} = \{ \mathbf{x} \in \R^4 \colon A \mathbf{x} \leq \mathbf{b}, \mathbf{x} \geq \mathbf{0} \}$ where
\[ A = \begin{pmatrix} 7 & 4 & 1 & 0\\ 4 & 7 & 0 & 1 \\ 43 & 53 & 2 & 5 \\ 53 & 43 & 5 & 2 \end{pmatrix} \text{ and } \mathbf{b} = \begin{pmatrix} 1 \\ 1 \\ 8 \\ 8 \end{pmatrix}. \]

The polytope has $8$ facets in dimension $4$. Consider the linear program $\min \{ (1,1,1,1)^\top \mathbf{x} \colon \mathbf{x} \in M_{4} \}$. Todd showed that the shortest monotone path from the vertex $(1,1,8,8)/19$ to the optimum $\mathbf{0}$ of this LP is of length at least $5$, a contradiction to the monotone Hirsch conjecture \cite{ToddExample}.

We begin with a closer look at the graph of $M_{4}$. In the forty years since there has not been a detailed analysis of how many different orientations of $M_{4}$ have large monotone diameters. We will first address how rigid the selection of orientation is to obtain a counterexample to the monotone Hirsch conjecture. To perform this computation, we first enumerate all orientations of the graph induced by a linear objective function and then we compute the monotone diameter using a breadth first search. In \cite{edgotope}, the authors show that this set of orientations corresponds to the set of vertices of a zonotope they call the \emph{edgotope}. To compute this zonotope for a polytope $P$ with vertices $V(P)$ and edges $E(P) = \{(\mathbf{u},\mathbf{v}): \mathbf{u}, \mathbf{v} \in V(P), \mathbf{u} \text{ is adjacent to } \mathbf{v}\}$, one computes the following:
\[EZ(P) = \sum_{(\mathbf{u},\mathbf{v}) \in E(P)} \text{conv}(\{\mathbf{u},\mathbf{v}\}).\]
Zonotopes are dual to hyperplane arrangements, so this statement is equivalent to the observation that the set of orientations are in bijection with the set of regions of the hyperplane arrangement 
\[\mathcal{H} = \bigcup_{(\mathbf{u},\mathbf{v}) \in E(P)} \{ \mathbf{x} \colon (\mathbf{u}-\mathbf{v})^\top \mathbf{x} = 0\}.\] 
A region $R$ of $\mathcal{H}$ is uniquely determined by its sign vector $\mathbf{z} \in \{+,-\}^{E(P)}$ with entry $\mathbf{z}_{(\mathbf{u},\mathbf{v})}$ denoting whether $\mathbf{c}^\top(\mathbf{u} - \mathbf{v}) > 0$ or $\mathbf{c}^\top(\mathbf{u} - \mathbf{v}) < 0$ for each $(\mathbf{u},\mathbf{v}) \in E(P)$ and all $\mathbf{c} \in R$. Equivalently, the sign vector determines whether $\mathbf{c}^\top \mathbf{u} < \mathbf{c}^\top \mathbf{v}$ or $\mathbf{c}^\top \mathbf{v} > \mathbf{c}^\top \mathbf{u}$ for all $(\mathbf{u},\mathbf{v}) \in E(P)$ and $\mathbf{c} \in R$, which uniquely determines the orientation of the polytope.

To enumerate all the regions of the edgotope arrangement for $M_{4}$, we first found the graph $G(M_{4})$. This graph has precisely $40$ edges, which leads to an arrangement of $40$ hyperplanes. We used Sage \cite{sage} to compute the set of regions of this arrangement and found that there are exactly $7112$ regions and therefore $7112$ orientations. We enumerated the possible oriented graphs of $M_{4}$ for those orientations, and only five have diameters that contradict the monotone Hirsch conjecture. Figure \ref{fig:ToddGraph} shows the orientation given by Todd, and the remaining four orientations are in the Appendix. Five representatives of choices of $\mathbf{c}$ for which there is a bad orientation are $(1, 1, 1, 1),$ $(10716, 13680, 3477, 4465),$ $(13680, 10716, 4465, 3477),$ $(912, 1824, 513, 817),$  and $(1824, 912, 817, 513)$. These four orientations other than $(1,1,1,1)$ of the graph of $M_{4}$ are new and only differ from the Todd orientation on one edge in the first two cases and on two edges in the final two cases. 

Each of those orientations have $\mathbf{0}$ as the optimum and the only vertex of distance $5$ away is $(1,1,8,8)/19$. In all of the bad orientations, $(1,1,8,8)/19$ is not a maximizer of $\mathbf{c}$. It follows then that $M_{4}$ is not a counterexample to Ziegler's strict monotone Hirsch conjecture, which asks whether the Hirsch bound is satisfied for paths from maxima to minima across all orientations (see Chapter $3$ of \cite{z-95} for more details). Furthermore, there are $1832$ orientations for which $\mathbf{0}$ is the unique sink, so even among those oriented graphs, a large diameter is rare. 

While these observations are interesting on their own, in our context it allows us to reduce the set of orientations for which we need to prove there is always a monotone circuit walk down to those with $\mathbf{0}$ as a unique sink and coming from $(1,1,8,8)/19$. Note that the cones for this spindle are given by $\{ \mathbf{x} \in \R^{4} \colon \mathbf{x} \geq \mathbf{0} \}$ and $\{ \mathbf{x} \in \R^{4} \colon A \mathbf{x} \leq \mathbf{b} \}$. With these observations, we may prove our main result:

\begin{theorem} \label{thm:todd}
The monotone circuit diameter of $M_{4}$ is $4$. 
\end{theorem}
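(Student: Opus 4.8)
The plan is to combine the computational reduction already carried out with the geometric results of Section~\ref{sec:shortcirc}. By the enumeration of all $7112$ orientations, the worst-case monotone distance is $5$ only from the vertex $\mathbf{v}=(1,1,8,8)/19$, and only for objective functions $\mathbf{c}$ that are uniquely minimized at $\mathbf{0}$. Since $M_4$ has $8$ facets in dimension $4$, the Hirsch bound $f-d=4$ is what we must meet, and every edge walk has length at least $4$ in the bad cases; so it suffices to exhibit, for every such $\mathbf{c}$, a \emph{monotone circuit walk of length exactly $4$} from $\mathbf{v}$ to $\mathbf{0}$. For all other vertex/objective pairs the combinatorial (hence circuit) monotone distance is already at most $4$, so nothing further is needed there, and $4$ is also a lower bound since $\mathbf{v}$ and $\mathbf{0}$ lie in no common facet and every step enters at most one new facet.

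The key observation is that the two feasible cones of the spindle $M_4$ are $C=\{\mathbf{x}\in\R^4:\mathbf{x}\ge\mathbf{0}\}$ at the apex $\mathbf{0}$ and $D'=\{\mathbf{x}\in\R^4:A\mathbf{x}\le\mathbf{0}\}$ at the apex $\mathbf{v}$ (after translating so the cone sits at the origin). Writing $M_4=(D+\mathbf{v})\cap(C+\mathbf{0})$ where $D=-D'$ is the recession-type cone at $\mathbf{v}$ pointing toward $\mathbf{0}$, I would verify that $D\subseteq C$, i.e.\ that every solution of $A\mathbf{x}\le\mathbf{0}$ with the appropriate sign convention is coordinatewise nonnegative — equivalently, that the extreme rays of the cone cut out by the four rows of $A$ all have nonnegative entries. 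This is a finite check: compute the four extreme rays of $\{A\mathbf{x}\le\mathbf{0}\}$ (each obtained by setting three of the four inequalities tight) and confirm they lie in the nonnegative orthant. Granting this containment, \cref{thm:spindle-containment} applied with $\mathbf{u}\leftrightarrow\mathbf{v}$, $\mathbf{v}\leftrightarrow\mathbf{0}$, the larger cone $C$ at $\mathbf{0}$ and the smaller cone $D$ at $\mathbf{v}$, immediately yields a sign-compatible circuit walk of length at most $d=4$ from $\mathbf{v}$ to $\mathbf{0}$, and this walk is monotone for every linear objective uniquely minimized over $M_4$ at $\mathbf{0}$ — which, by the reduction above, covers exactly the problematic orientations.

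I expect the main obstacle to be the containment verification $D\subseteq C$: one must be careful about orientation of the cones (the spindle is $(C+\mathbf{u})\cap(-D+\mathbf{v})$ in the statement of \cref{thm:spindle-containment}, so signs must be tracked so that the roles of apex/cone match), and one must check that $M_4$ is genuinely a spindle with $\mathbf{0}$ and $\mathbf{v}$ as apices, i.e.\ that $\mathbf{v}-\mathbf{0}$ lies in the interior of $C\cap D=D$. Both are concrete linear-algebra computations with the given $4\times 4$ matrix $A$ and will not present a conceptual difficulty, but they are where the argument could break if the inclusion fails. If it does hold, the theorem follows with no further work; the combinatorial enumeration has done the job of isolating precisely the objective functions to which \cref{thm:spindle-containment} applies, and the lower bound of $4$ is immediate from the facet count.
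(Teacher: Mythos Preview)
Your overall strategy matches the paper's: reduce by the computational enumeration to the single problematic pair $\big(\mathbf{v},\mathbf{0}\big)$ with $\mathbf{0}$ the unique sink, and then invoke \cref{thm:spindle-containment}. That part is fine.

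The gap is in how you set up the cone containment. In \cref{thm:spindle-containment} the polytope is written as $(C+\mathbf{u})\cap(-D+\mathbf{v})$ with the walk going from $\mathbf{u}$ to $\mathbf{v}$; so $C$ is the feasible cone at the \emph{starting} apex and $-D$ at the \emph{ending} apex. With $\mathbf{u}=(1,1,8,8)/19$ and $\mathbf{v}=\mathbf{0}$ this gives $C=\{A\mathbf{x}\le\mathbf{0}\}$ and $-D=\{\mathbf{x}\ge\mathbf{0}\}$, hence $D=\{\mathbf{x}\le\mathbf{0}\}$. The hypothesis $D\subseteq C$ then reads $\{\mathbf{x}\le\mathbf{0}\}\subseteq\{A\mathbf{x}\le\mathbf{0}\}$, which is immediate because every entry of $A$ is nonnegative. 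You instead placed $C=\{\mathbf{x}\ge\mathbf{0}\}$ at the ending apex and proposed to check that the extreme rays of $\{A\mathbf{x}\le\mathbf{0}\}$ lie in the nonnegative orthant. That containment is false: for instance $-\mathbf{e}_1$ satisfies $A(-\mathbf{e}_1)=(-7,-4,-43,-53)\le\mathbf{0}$ but is not coordinatewise nonnegative. So the specific ``finite check'' you describe would fail, even though the correct (reversed) inclusion is a one-line observation. You flagged the sign bookkeeping as the likely pitfall, and indeed that is exactly where the proposal breaks.

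A smaller point: your lower bound argument (``$\mathbf{v}$ and $\mathbf{0}$ share no facet and every step enters at most one new facet'') is not justified for circuit walks, since a maximal circuit step can in principle land on several facets at once. The paper instead argues that $(1,1,8,8)/19$ cannot be written as a linear combination of three circuits, so no circuit walk of length $3$ from $\mathbf{0}$ to $(1,1,8,8)/19$ exists; this bounds the circuit diameter, and hence the monotone circuit diameter, from below by $4$.
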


\begin{proof}
From our computations, for all orientations that do not have $\mathbf{0}$ as the unique optimum, there is always a monotone edge walk from any starting vertex of length at most $4$ and therefore always a monotone circuit walk of length at most $4$. Furthermore, the only case for which the shortest monotone edge walk is of length $5$ is when the starting vertex is $(1,1,8,8)/19$. Note that $\mathbf{0}$ is the apex of the cone $\{ \mathbf{x} \in \R^4 \colon \mathbf{x} \geq \mathbf{0} \}$ and $(1,1,8,8)/19$ is the apex of the cone $\{ \mathbf{x} \in \R^{4} \colon A \mathbf{x} \leq \mathbf{b} \}$.
Observe also that, since the entries of $-A$ are all non-positive, we have that $ \{ \mathbf{x} \in \R^{4} \colon \mathbf{x} \geq \mathbf{0}\} \subseteq \left\{ \mathbf{x} \in \R^{4} \colon -A  \mathbf{x} \leq \mathbf{0} \right\}.$ Hence, by \cref{thm:spindle-containment}, there must always exist a monotone circuit walk of length at most $4$ from $(1,1,8,8)/19$ to $\mathbf{0}$ for any orientation for which $\mathbf{0}$ is minimal. Therefore, the monotone circuit diameter of the Todd example $M_4$ is at most $4$. We may show computationally that it is exactly $4$ by verifying that $(1,1,8,8)/19$ is not a linear combination of any $3$ circuits. This implies that there is no circuit walk of length $3$ from $\mathbf{0}$ to $(1,1,8,8)/19$. \qed
\end{proof}

A natural question about our argument is how much it depends on the realization. Our proof does not apply to all possible realizations of the Todd polytope. It is, however, resilient under mild perturbations so long as the orientation stays fixed. Namely, fix a linear objective function $\mathbf{c}$ that induces an orientation of the graph of $M_{4}$ that contradicts the monotone Hirsch conjecture. Then, for a sufficiently small perturbation that preserves the containment of one cone within the other, $M_{4}$ together with $\mathbf{c}$ remains a counterexample to the monotone Hirsch conjecture while still having the same short circuit walk. 

For more general realizations and perturbations, there is more work to do. Since $M_{4}$ is simple, a sufficiently small perturbation does not change the graph of the polytope itself. Observe that, by our arguments thus far, the number of orientations of the Todd polytope graph is the number of regions of the hyperplane arrangement of linear hyperplanes with normal vectors given by the edge directions of $P$. By standard theory of hyperplane arrangements, the number of regions is determined by the linear matroid from the matrix with columns given by the normals of the hyperplanes in the arrangement. In particular, by the upper bound theorem for hyperplane arrangements, the number of regions is maximized precisely when the matroid is uniform. From an easy computation, the matroid generated by the edge directions of the Todd polytope is not uniform. However, after any sufficiently small generic perturbation, the matroid generated by the resulting edge directions will be uniform and hence, the number of orientations will increase. Furthermore, for a sufficiently small perturbation, the edge directions will remain approximately the same. This means that all orientations for the Todd polytope will remain, and only some new orientations are added. Thus, our arguments do not fully extend after perturbation if we only perturb the polytope, since new orientations may be created that we do not account for with our current arguments. 

\begin{figure}
    \[
\begin{tikzpicture}
    \draw[thick] (0, 0) node[red, circle,fill, inner sep = 1.5pt] {};
    \draw (0,.5) node[red] {\small $1234$};
    \draw[thick] (-1, 0) node[teal, circle,fill, inner sep = 1.5pt] {};
    \draw (-1,-.5) node[teal] {\small $1246$};
    \draw[thick] (-2, 1) node[orange, circle,fill, inner sep = 1.5pt] {};
    \draw (-2.75,1) node[orange] {\small $2346$};
    \draw[thick] (1, 0) node[teal, circle,fill, inner sep = 1.5pt] {};
    \draw (1,-.5) node[teal] {\small $1235$};
    \draw[thick] (2, 1) node[orange, circle,fill, inner sep = 1.5pt] {};
    \draw (2.75,1) node[orange] {\small $1345$};
    \draw[thick] (2, 3) node[teal, circle,fill, inner sep = 1.5pt] {};
    \draw (2,3.5) node[teal] {\small $3458$};
    \draw[thick] (3, 3) node[teal, circle,fill, inner sep = 1.5pt] {};
    \draw (3.75,3) node[teal] {\small $1458$};
    \draw[thick] (-3, 3) node[teal, circle,fill, inner sep = 1.5pt] {};
    \draw (-3.75,3) node[teal] {\small $2367$};
    \draw[thick] (-2, 3) node[teal, circle,fill, inner sep = 1.5pt] {};
    \draw (-2,3.5) node[teal] {\small $3467$};
    \draw[thick] (-4, 9) node[violet, circle,fill, inner sep = 1.5pt] {};
    \draw (-4,9.5) node[violet] {\small $2567$};
    \draw[thick] (4, 9) node[violet, circle,fill, inner sep = 1.5pt] {};
    \draw (4,9.5) node[violet] {\small $1568$};
    \draw[thick] (0, 3) node[teal, circle,fill, inner sep = 1.5pt] {};
    \draw (0,2.5) node[teal] {\small $3478$}; 
    \draw[thick] (-2, 5) node[green, circle,fill, inner sep = 1.5pt] {};
    \draw (-1.25,5.25) node[green] {\small $2358$}; 
    \draw[thick] (-3, 5) node[green, circle,fill, inner sep = 1.5pt] {};
    \draw (-3.15,5.5) node[green] {\small $2378$};
    \draw[thick] (3, 5) node[green, circle,fill, inner sep = 1.5pt] {};
    \draw (3.15,5.5) node[green] {\small $1478$};
    \draw[thick] (2, 5) node[green, circle,fill, inner sep = 1.5pt] {};
    \draw (1.25,5.25) node[green] {\small $1467$}; 
    \draw[thick] (-2, 7) node[violet, circle,fill, inner sep = 1.5pt] {};
    \draw (-1.25,7) node[violet] {\small $2578$}; 
    \draw[thick] (2, 7) node[violet, circle,fill, inner sep = 1.5pt] {};
    \draw (1.25,7) node[violet] {\small $1678$}; 
    \draw[thick] (0, 8) node[blue, circle,fill, inner sep = 1.5pt] {};
    \draw (.75,8) node[blue] {\small $\mathbf{5678}$};
    \draw[thick] (0, 10) node[green, circle,fill, inner sep = 1.5pt] {};
    \draw (0,10.5) node[green] {$1256$};
    \draw[thick, ->] (-1/10, 1/10) -- (-9/5, 4/5);
    \draw[thick, ->] (1/10, 1/10) -- (9/5, 4/5);
    \draw[thick, ->] (21/10, 5) -- (14/5, 5);
    \draw[thick, ->] (2, 51/10) -- (2, 34/5);
    \draw[thick, ->] (-3, 31/10) -- (-3, 24/5);
    \draw[thick, ->] (-29/10, 3) -- (-11/5, 3);
    \draw[thick, ->] (-9/10, 0) -- (-1/5, 0);
    \draw[thick, ->] (-9/10, 1/10) -- (9/5, 24/5);
    \draw[thick, ->] (-11/10, 1/10) -- (-9/5, 4/5);
    \draw[thick, ->] (-29/10, 51/10) -- (-11/5, 34/5);
    \draw[thick, ->] (29/10, 51/10) -- (11/5, 34/5);
    \draw[thick, ->] (-1/10, 31/10) -- (-14/5, 24/5);
    \draw[thick, ->] (1/10, 31/10) -- (14/5, 24/5);
    \draw[thick, ->] (-21/10, 11/10) -- (-14/5, 14/5);
    \draw[thick, ->] (-2, 11/10) -- (-2, 14/5);
    \draw[thick, ->] (19/10, 71/10) -- (1/5, 39/5);
    \draw[thick, ->] (-19/10, 31/10) -- (9/5, 24/5);
    \draw[thick, ->] (-19/10, 3) -- (-1/5, 3);
    \draw[thick, ->] (39/10, 89/10) -- (11/5, 36/5);
    \draw[thick, ->] (39/10, 89/10) -- (1/5, 41/5);
    \draw[thick, ->] (39/10, 89/10) -- (16/5, 16/5);
    \draw[thick, ->] (19/10, 3) -- (1/5, 3);
    \draw[thick, ->] (19/10, 31/10) -- (-9/5, 24/5);
    \draw[thick, ->] (3, 31/10) -- (3, 24/5);
    \draw[thick, ->] (29/10, 3) -- (11/5, 3);
    \draw[thick, ->] (-21/10, 5) -- (-14/5, 5);
    \draw[thick, ->] (-2, 51/10) -- (-2, 34/5);
    \draw[thick, ->] (-19/10, 71/10) -- (-1/5, 39/5);
    \draw[thick, ->] (-1/10, 99/10) -- (-4/5, 1/5);
    \draw[thick, ->] (1/10, 99/10) -- (19/5, 46/5);
    \draw[thick, ->] (-1/10, 99/10) -- (-19/5, 46/5);
    \draw[thick, ->] (1/10, 99/10) -- (4/5, 1/5);
    \draw[thick, ->] (-39/10, 89/10) -- (-16/5, 16/5);
    \draw[thick, ->] (-39/10, 89/10) -- (-1/5, 41/5);
    \draw[thick, ->] (-39/10, 89/10) -- (-11/5, 36/5);
    \draw[thick, ->] (9/10, 0) -- (1/5, 0);
    \draw[thick, ->] (9/10, 1/10) -- (-9/5, 24/5);
    \draw[thick, ->] (11/10, 1/10) -- (9/5, 4/5);
    \draw[thick, ->] (2, 11/10) -- (2, 14/5);
    \draw[thick, ->] (21/10, 11/10) -- (14/5, 14/5);
\end{tikzpicture}
\]
    \caption{The directed graph of $M_{4}$ for minimizing the linear function $(1,1,1,1)^\top \mathbf{x}$ used by Todd. The vertices are colored based on their distance to the optimum; there are $6$ different colors, as the diameter of the digraph is $5$. To match Figure $2$ in \cite{sy-15b}, we labeled the vertices by which inequalities are tight: inequalities $1$ to $4$ correspond to the rows of $A$, and $5$ to $8$ correspond to the non-negativity constraints.}
    \label{fig:ToddGraph}
\end{figure}
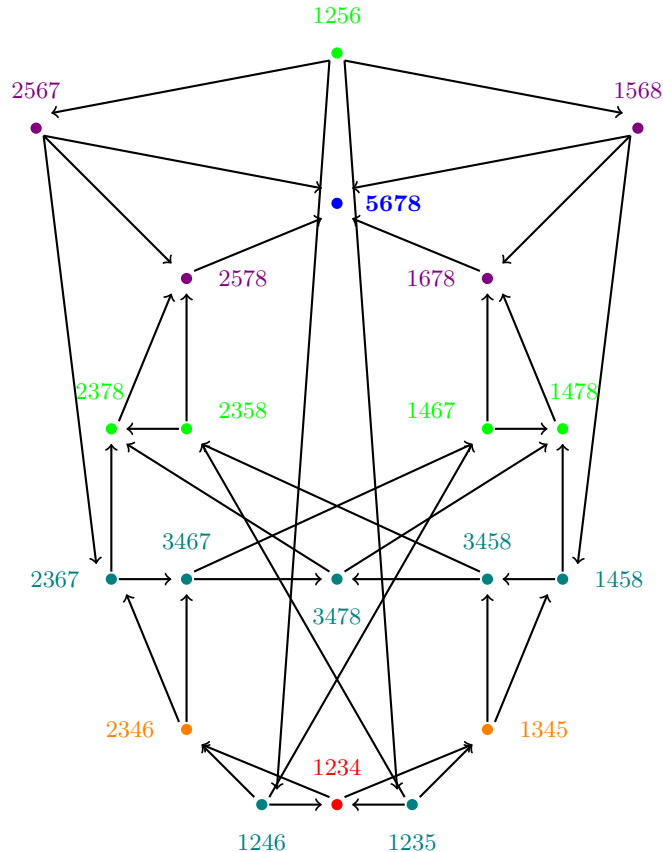

\subsection{Anti-Blocking Polytopes} \label{sec:antiblock}

While studying the Todd counterexample, we noticed that our arguments apply to a more general class of polytopes introduced by Fulkerson in \cite{AntiBlock} in the context of combinatorial optimization. Let $P$ be a polytope of the form $\{\mathbf{x} \in \mathbb{R}^d: A\mathbf{x} \leq \mathbf{b}, \mathbf{x} \geq \mathbf{0}\}$. Then we say that $P$ is {\em anti-blocking} or {\em down monotone} if whenever $\mathbf{x} \in P$, then also $\mathbf{x} - \mathbf{x}_{i} \mathbf{e}_{i} \in P$ for all $i \in [d]$. The following standard observation will be useful to leverage. We provide a brief proof.

\begin{lemma}
\label{lem:anti-block}
Let $P \subseteq \R^d$ be a full-dimensional polytope, and suppose that $P$ is anti-blocking. Then $\mathbf{0}$ is a vertex of $P$, and the feasible cone at $\mathbf{0}$ is precisely the non-negative orthant. 
\end{lemma}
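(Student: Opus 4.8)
The plan is to establish the three assertions in the order they are stated: that $P$ contains $\mathbf{0}$, that $\mathbf{0}$ is a vertex of $P$, and that the feasible cone at $\mathbf{0}$ is the nonnegative orthant. The only step I expect to need a genuine argument (rather than bookkeeping) is ruling out rows of $A$ that are tight at $\mathbf{0}$ and could otherwise cut into the feasible cone; everything else follows from elementary polyhedral facts combined with iterating the anti-blocking axiom.

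First I would push an arbitrary point of $P$ down to the origin. Since $P$ is full-dimensional it is nonempty, so pick any $\mathbf{x}\in P$ and zero out its coordinates one at a time: replace $\mathbf{x}$ by $\mathbf{x}-\mathbf{x}_1\mathbf{e}_1$, then zero out the second coordinate of the result, and so on through coordinate $d$. Each step stays in $P$ by the anti-blocking property, and after $d$ steps we reach $\mathbf{0}$, so $\mathbf{0}\in P$ (in particular $\mathbf{b}\ge A\mathbf{0}=\mathbf{0}$). At $\mathbf{0}$ the $d$ nonnegativity constraints $-\mathbf{x}_i\le 0$ are all tight, and their normals $-\mathbf{e}_1,\dots,-\mathbf{e}_d$ span $\R^d$; by the standard rank criterion for basic feasible solutions, $\mathbf{0}$ is therefore a vertex of $P$.

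Next I would compute the feasible cone $K$ at $\mathbf{0}$, i.e.\ the set of directions along which every inequality tight at $\mathbf{0}$ remains satisfied, so that $K=\{\mathbf{r}\in\R^d : \mathbf{r}\ge\mathbf{0} \text{ and } \mathbf{a}_j^\top\mathbf{r}\le 0 \text{ for every row } \mathbf{a}_j \text{ of } A \text{ with } b_j=0\}$. Trivially $K$ is contained in the nonnegative orthant, so the work lies in the reverse inclusion. I claim that every tight row $\mathbf{a}_j$ (one with $b_j=0$) is entrywise nonpositive; granting this, each inequality $\mathbf{a}_j^\top\mathbf{r}\le 0$ is already implied by $\mathbf{r}\ge\mathbf{0}$, hence redundant in the description of $K$, so $K$ is exactly the nonnegative orthant, completing the proof.

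To prove the claim I would argue by contradiction: suppose $b_j=0$ but some entry $(\mathbf{a}_j)_i>0$. Choose an interior point $\mathbf{x}^{*}$ of $P$; full-dimensionality guarantees one exists, and an interior point satisfies all defining constraints strictly, so $\mathbf{x}^{*}>\mathbf{0}$. Using the anti-blocking property, zero out every coordinate $\ell$ of $\mathbf{x}^{*}$ with $(\mathbf{a}_j)_\ell\le 0$, obtaining $\mathbf{y}\in P$ with $\mathbf{y}_\ell=\mathbf{x}^{*}_\ell$ when $(\mathbf{a}_j)_\ell>0$ and $\mathbf{y}_\ell=0$ otherwise. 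Then $\mathbf{a}_j^\top\mathbf{y}=\sum_{\ell:\,(\mathbf{a}_j)_\ell>0}(\mathbf{a}_j)_\ell\,\mathbf{x}^{*}_{\ell}>0=b_j$, since this sum is nonempty and each term is strictly positive, contradicting $\mathbf{y}\in P$. Hence every tight row is entrywise nonpositive, as claimed. The one subtlety throughout is that the anti-blocking axiom is phrased as a single-coordinate operation, so I must iterate it; no convexity is needed at any point.
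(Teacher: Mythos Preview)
Your proof is correct. The first two steps (showing $\mathbf{0}\in P$ by iterating the anti-blocking axiom, and that $\mathbf{0}$ is a vertex because the $d$ nonnegativity constraints are tight there) match the paper essentially verbatim.

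For the reverse inclusion $\R^d_{\ge 0}\subseteq K$, however, you take a genuinely different route. The paper argues geometrically: pick $\mathbf{x}\in P$ with all coordinates strictly positive (which exists by full-dimensionality), then zero out all coordinates except the $j$th to obtain $\mathbf{x}_j\mathbf{e}_j\in P$; hence each $\mathbf{e}_j$ lies in the feasible cone at $\mathbf{0}$, and the orthant they generate is contained in $K$. You instead argue algebraically via the inequality description: you show that every row $\mathbf{a}_j$ of $A$ tight at $\mathbf{0}$ (i.e., with $b_j=0$) must be entrywise nonpositive, so the corresponding constraint $\mathbf{a}_j^\top\mathbf{r}\le 0$ is implied by $\mathbf{r}\ge\mathbf{0}$ and drops out of the description of $K$. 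Your contradiction argument, zeroing out the nonpositive-coefficient coordinates of an interior point to force $\mathbf{a}_j^\top\mathbf{y}>0=b_j$, is sound.

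The paper's approach is shorter and avoids reasoning about the matrix entirely. Your approach, on the other hand, extracts a structural fact about $A$ (tight rows at the origin are $\le\mathbf{0}$) that anticipates the characterization in the paper's subsequent \cref{lem:charanti}; in that sense your argument is closer to the algebraic spirit of Fulkerson's original treatment.
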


\begin{proof}
Let $\mathbf{x} \in P$. Then $\mathbf{x} \geq \mathbf{0}$. Since $P$ is anti-blocking, $\mathbf{x} - \mathbf{x}_{1} \mathbf{e}_{1} \in P$, and by induction, $\mathbf{x} - \sum_{i=1}^{d} \mathbf{x}_{i} \mathbf{e}_{i} =  \mathbf{0} \in P$. Hence, $\mathbf{0} \in P$. Since $P \subseteq \{\mathbf{x} \colon \mathbf{x} \geq \mathbf{0}\}$, the feasible cone at $\mathbf{0}$ is contained in $\{\mathbf{x} \colon \mathbf{x} \geq \mathbf{0}\}$ and thus $\mathbf{0}$ must be a vertex of $P$. Since $P$ is full-dimensional and the set of vectors with distinct nonzero coordinates is dense in $\R^{d}$, there exists $\mathbf{x} \in P$ such that $\mathbf{x}_{i} > 0$ for all $i \in [d]$. However, then $\mathbf{x} - \sum_{i \in [d] \setminus \{j\}} \mathbf{x}_{i} \mathbf{e}_{i} = \mathbf{x}_{j} \mathbf{e}_{j} \in P$ for all $\mathbf{e}_{j}$. Thus, the feasible cone at $\mathbf{0}$ must contain $\mathbf{x}_{j} \mathbf{e}_{j}$ for all $j \in [d]$. Since the cone generated by $\mathbf{x}_{j} \mathbf{e}_{j}$ is the non-negative orthant, the feasible cone at $\mathbf{0}$ must therefore also be the non-negative orthant. \qed
\end{proof}

In the circuit setting, one can use this lemma to readily prove the existence of a circuit walk towards $\mathbf{0}$ that reduces the size of the support in each step.

\begin{theorem}\label{thm:antiblock}
Let $P \subseteq \R^{d}$ be a full-dimensional anti-blocking polytope. Then there is a circuit walk of length at most $d$ from any point $\mathbf{x} \in P$ to $\mathbf{0}$. Furthermore, this circuit walk is monotonically decreasing with respect to any linear objective function uniquely minimized at $\mathbf{0}$.
\end{theorem}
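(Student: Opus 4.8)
The plan is to build the circuit walk greedily by zeroing out one coordinate at a time. By \cref{lem:anti-block}, $\mathbf{0}$ is a vertex of $P$ and the feasible cone at $\mathbf{0}$ is the non-negative orthant $\R^d_{\geq 0}$; in particular the standard basis vectors $-\mathbf{e}_1,\dots,-\mathbf{e}_d$ are all edge directions at $\mathbf{0}$, hence lie in $\mathcal{C}(B)$. Starting from $\mathbf{x}=\mathbf{y}_0$, I would at step $j$ pick any coordinate $i_j$ with $(\mathbf{y}_{j-1})_{i_j}>0$ and walk along $\mathbf{g}_j=-\mathbf{e}_{i_j}$ with maximal step length $\alpha_j>0$. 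The key point is that, because $P$ is anti-blocking, $\mathbf{y}_{j-1}-(\mathbf{y}_{j-1})_{i_j}\mathbf{e}_{i_j}\in P$, so the maximal step does not overshoot: $\alpha_j \le (\mathbf{y}_{j-1})_{i_j}$ is the only constraint that can be binding from above within $P$ in that coordinate direction (all facet inequalities $\mathbf{b}_k^\top\mathbf{x}\le d_k$ with $(\mathbf{b}_k)_{i_j}>0$ only get looser as we decrease coordinate $i_j$, and the nonnegativity constraint $x_{i_j}\ge 0$ caps the step at exactly $(\mathbf{y}_{j-1})_{i_j}$). Hence $\alpha_j=(\mathbf{y}_{j-1})_{i_j}$ and $\mathbf{y}_j=\mathbf{y}_{j-1}-(\mathbf{y}_{j-1})_{i_j}\mathbf{e}_{i_j}$, which is in $P$ and has one more zero coordinate than $\mathbf{y}_{j-1}$.

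Iterating, after at most $d$ steps all coordinates have been set to zero and we reach $\mathbf{0}$. Since the support strictly decreases at each step, the length is at most $d=|\operatorname{supp}(\mathbf{x})|$ steps, and each $\mathbf{y}_j$ lies in $P$, so this is a valid circuit walk in the sense of \cref{def:circuitwalk}. For monotonicity, observe that $-\mathbf{e}_1,\dots,-\mathbf{e}_d$ all lie in $-\R^d_{\geq 0}=-C$ where $C=\R^d_{\geq 0}$ is the feasible cone at $\mathbf{0}$; equivalently, the directions $\mathbf{e}_1,\dots,\mathbf{e}_d$ all lie in $C$, so by \cref{prop:sign-comp-char} (with the roles of $\mathbf{u},\mathbf{v}$ played by $\mathbf{x},\mathbf{0}$ and the relevant face being $C$ itself, since $\mathbf{x}-\mathbf{0}=\mathbf{x}\in\R^d_{\geq 0}$) the walk is sign-compatible and therefore monotone for every linear objective function uniquely minimized over $-C$ — equivalently over $P$, since $-C$ is the feasible cone at the minimizer $\mathbf{0}$ — at $\mathbf{0}$. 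Alternatively, monotonicity is immediate: if $\mathbf{c}$ is uniquely minimized at $\mathbf{0}$ then $\mathbf{c}^\top\mathbf{x}>0$ for all $\mathbf{x}\in P\setminus\{\mathbf{0}\}$, in particular $\mathbf{c}_i>0$ for each $i$ (taking $\mathbf{x}$ a small positive multiple of $\mathbf{e}_i$, which lies in $P$ by anti-blocking applied to an interior point), so $\mathbf{c}^\top\mathbf{g}_j=-\mathbf{c}_{i_j}<0$ at every step.

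The only genuinely delicate point is the claim that the maximal step length along $-\mathbf{e}_{i_j}$ inside $P$ equals exactly $(\mathbf{y}_{j-1})_{i_j}$, i.e. that the walk truly lands on the lower-dimensional face and does not stop early. This is precisely where the anti-blocking hypothesis is essential: it guarantees the endpoint $\mathbf{y}_{j-1}-(\mathbf{y}_{j-1})_{i_j}\mathbf{e}_{i_j}$ is feasible, and feasibility of that endpoint forces $\alpha_j\ge(\mathbf{y}_{j-1})_{i_j}$, while $x_{i_j}\ge 0$ forces $\alpha_j\le(\mathbf{y}_{j-1})_{i_j}$. Everything else is bookkeeping. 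I would also remark that one need not invoke \cref{prop:sign-comp-char} at all for the monotonicity statement — the direct argument in the previous paragraph suffices — but citing it makes the connection to \cref{sec:shortcirc} transparent and shows this walk is in fact sign-compatible.
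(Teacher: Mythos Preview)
Your proof is correct and follows essentially the same approach as the paper: use \cref{lem:anti-block} to see that the $\pm\mathbf{e}_i$ are circuits, then zero out one coordinate at a time, with anti-blocking guaranteeing the endpoint is feasible and the nonnegativity constraint making the step maximal; monotonicity follows since $\mathbf{c}>\mathbf{0}$ whenever $\mathbf{0}$ is the unique minimizer. The only cosmetic difference is that you additionally observe the walk is sign-compatible via \cref{prop:sign-comp-char}, which the paper does not invoke here.
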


\begin{proof}
If $\mathbf{x} = \mathbf{0}$, there is nothing to prove. Thus, let $\mathbf{x} \in P$, and suppose that $\mathbf{x} \neq \mathbf{0}$. Note that there exists some $i \in [d]$ such that $\mathbf{x}_{i} > 0$. It suffices to see that there always is a monotone circuit step that reduces the size of support of $\mathbf{x}$ by one. 

By Lemma \ref{lem:anti-block}, the feasible cone of $P$ at $\mathbf{0}$ is the non-negative orthant. Hence, $\mathbf{e}_{i}$ is an edge direction of $P$, and thus also a circuit direction. This implies that a step from $\mathbf{x}$ to $\mathbf{x} - \mathbf{x}_{i} \mathbf{e}_{i}$ is in direction of a circuit. As $P$ is anti-blocking, $\mathbf{x} - \mathbf{x}_{i} \mathbf{e}_{i} \in P$, and since $P \subseteq \{\mathbf{x} \colon \mathbf{x} \geq \mathbf{0}\}$, such a step is of maximal step length. The feasible cone at $\mathbf{0}$ being the non-negative orthant further implies that a linear objective function $\mathbf{c} \in \R^d$ is uniquely minimized over $P$ at $\mathbf{0}$ if and only if $\mathbf{c} > \mathbf{0}$, i.e., if each component of $\mathbf{c}$ is strictly positive. Hence, $\mathbf{c}^\top(-\mathbf{x}_{i}\mathbf{e}_{i}) < 0$ which shows that the circuit step is also monotone. \qed
\end{proof}

As an immediate consequence, we have the following corollary for bounding circuit diameters of anti-blocking polytopes.

\begin{corollary}
The circuit diameter of a $d$-dimensional anti-blocking polytope is at most $d + \delta_{0}$, where $\delta_{0}$ is the largest length of a shortest circuit walk from $\mathbf{0}$ to any other vertex. 
\end{corollary}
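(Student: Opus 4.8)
The plan is to deduce the bound directly from \cref{thm:antiblock} by concatenating two circuit walks. Recall that the circuit diameter of $P$ is the maximum, over all ordered pairs of vertices $\mathbf{u},\mathbf{v}$ of $P$, of the length of a shortest circuit walk from $\mathbf{u}$ to $\mathbf{v}$. So it suffices to fix arbitrary vertices $\mathbf{u}$ and $\mathbf{v}$ of the $d$-dimensional anti-blocking polytope $P$ and exhibit \emph{some} circuit walk from $\mathbf{u}$ to $\mathbf{v}$ of length at most $d+\delta_0$.

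First I would route the walk through the vertex $\mathbf{0}$. By \cref{lem:anti-block}, $\mathbf{0}$ is indeed a vertex of $P$, so this makes sense. Applying \cref{thm:antiblock} to the point $\mathbf{u}\in P$ yields a circuit walk from $\mathbf{u}$ to $\mathbf{0}$ of length at most $d$. By the definition of $\delta_0$ as the largest length of a shortest circuit walk from $\mathbf{0}$ to any other vertex, there is a circuit walk from $\mathbf{0}$ to $\mathbf{v}$ of length at most $\delta_0$.

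The remaining step is to concatenate these two walks and observe that the result is again a circuit walk: appending a circuit walk that starts at the endpoint of another is a circuit walk, since every step still moves along a circuit of $P$ with maximal feasible step length and every intermediate point still lies in $P$, so all the conditions of \cref{def:circuitwalk} are preserved at the junction. This gives a circuit walk from $\mathbf{u}$ to $\mathbf{v}$ of length at most $d+\delta_0$, hence the shortest such walk has length at most $d+\delta_0$; taking the maximum over all vertex pairs proves the corollary.

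I do not anticipate any real obstacle; the only points deserving a line of comment are the degenerate cases $\mathbf{u}=\mathbf{0}$ or $\mathbf{v}=\mathbf{0}$, where one of the two sub-walks is empty and the bound holds trivially, and the (immediate) fact that concatenation preserves the circuit-walk property.
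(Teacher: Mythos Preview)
Your argument is correct and is exactly the intended one: the paper states this corollary without proof as ``an immediate consequence'' of \cref{thm:antiblock}, and the implied reasoning is precisely the concatenation through $\mathbf{0}$ that you spell out.
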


Our final goal for this section is to exhibit that $M_4$ is, in fact, an anti-blocking polytope. To this end, we rely on the following well known characterization of anti-blocking for polytopes given in a certain description originally provided by Fulkerson in \cite{AntiBlock}.

\begin{lemma}\label{lem:charanti}
A full dimensional polytope $P$ with irredundant inequality description $P = \{  \mathbf{x}\colon A \mathbf{x} \leq \mathbf{b}, \mathbf{x} \geq \mathbf{0}\}$ for $\mathbf{b} \geq \mathbf{0}$ is anti-blocking if and only if $A \geq \mathbf{0}$. 
\end{lemma}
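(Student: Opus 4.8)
The plan is to prove the two implications separately; the ``if'' direction is a direct feasibility check, and the ``only if'' direction is a perturbation argument that crucially uses irredundancy.

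For the ``if'' direction, I would assume $A \geq \mathbf{0}$ and take any $\mathbf{x} \in P$. Fix $i \in [d]$ and consider $\mathbf{x}' = \mathbf{x} - \mathbf{x}_i \mathbf{e}_i$. Only the $i$-th coordinate changes, and it drops from $\mathbf{x}_i \geq 0$ to $0$, so $\mathbf{x}' \geq \mathbf{0}$. Moreover $A\mathbf{x}' = A\mathbf{x} - \mathbf{x}_i (A\mathbf{e}_i) \leq A\mathbf{x} \leq \mathbf{b}$, since $\mathbf{x}_i \geq 0$ and the $i$-th column $A\mathbf{e}_i$ of $A$ is nonnegative. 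Hence $\mathbf{x}' \in P$, and $P$ is anti-blocking.

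For the ``only if'' direction, I would argue by contradiction: suppose $A_{ki} < 0$ for some row $k$ and column $i$. The key step is to produce a point $\mathbf{x}^\ast \in P$ at which the $k$-th inequality $A_{k\cdot}\mathbf{x} \leq b_k$ is tight, every other listed inequality holds strictly, and $\mathbf{x}^\ast_j > 0$ for all $j$. Such a point exists by irredundancy: since $P$ is full-dimensional, it has an interior point $\mathbf{y}$ with $\mathbf{y} > \mathbf{0}$ satisfying all constraints strictly; since the $k$-th row is not redundant, there is a point $\mathbf{z}$ satisfying every listed constraint except $A_{k\cdot}\mathbf{z} > b_k$; letting $\mathbf{x}^\ast$ be the point where the segment $[\mathbf{y},\mathbf{z}]$ crosses the hyperplane $\{A_{k\cdot}\mathbf{x} = b_k\}$, convexity of the remaining constraints gives $\mathbf{x}^\ast_j > 0$ for all $j$ and $A_{j\cdot}\mathbf{x}^\ast < b_j$ for $j \neq k$. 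Now for $0 < \epsilon \leq \mathbf{x}^\ast_i$ the point $\mathbf{x}^\ast - \epsilon \mathbf{e}_i$ lies in $P$, because it is a convex combination of $\mathbf{x}^\ast \in P$ and $\mathbf{x}^\ast - \mathbf{x}^\ast_i \mathbf{e}_i \in P$ (the latter by anti-blocking). But $A_{k\cdot}(\mathbf{x}^\ast - \epsilon \mathbf{e}_i) = b_k - \epsilon A_{ki} > b_k$ since $A_{ki} < 0$, so $\mathbf{x}^\ast - \epsilon \mathbf{e}_i \notin P$, a contradiction. Therefore $A \geq \mathbf{0}$.

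The main obstacle is the construction of $\mathbf{x}^\ast$: one must use irredundancy to obtain a point where \emph{exactly} the $k$-th constraint is active while the nonnegativity constraints are satisfied strictly (this is where $\mathbf{b} \geq \mathbf{0}$ and full-dimensionality enter), so that the single-coordinate perturbation $\mathbf{x}^\ast - \epsilon \mathbf{e}_i$ stays feasible with respect to every constraint other than the $k$-th. Everything else is a routine check.
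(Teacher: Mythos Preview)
Your proof is correct and follows essentially the same route as the paper: the ``if'' direction is identical, and for the ``only if'' direction both arguments produce a point on the facet $\{A_{k\cdot}\mathbf{x}=b_k\}$ with positive $i$-th coordinate and then observe that zeroing that coordinate violates the $k$-th constraint. The only difference is in how that point is obtained: you build a relative-interior point of the facet via a segment-crossing argument, while the paper simply notes that the facets $\{A_{k\cdot}\mathbf{x}=b_k\}$ and $\{\mathbf{x}_i=0\}$ are distinct (both facets by irredundancy) and hence not nested, so some $\mathbf{y}$ on the first has $\mathbf{y}_i>0$; your detour through $\epsilon$ is also unnecessary, since $\epsilon=\mathbf{x}^\ast_i$ already gives the contradiction.
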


\begin{proof}
We prove the claim by contradiction. Suppose that there exists an entry $a_{ij}$ of $A$ 
such that $a_{ij} < 0$. Since the description of $P$ is irredundant, $F_{1} = \{\mathbf{x} \colon A_{i}^\top\mathbf{x} = \mathbf{b}_{i}\}$ and $F_{2} = \{\mathbf{x} \colon \mathbf{x}_{j} = 0\}$ are both facets of $P$ and therefore $F_{1}$ is not a proper subset of $F_{2}$. Hence, there exists $\mathbf{y} \in F_{1}$ such that $\mathbf{y} \notin F_{2}$. Then $\mathbf{y}_{j} > 0$, and $\sum_{j=1}^{n} a_{ij} \mathbf{y}_{j} = \mathbf{b}_{i}$. However, since $a_{ij} < 0$, 
\[A_{i}^\top(\mathbf{y} - \mathbf{y}_{j} \mathbf{e}_{j}) = -a_{ij}\mathbf{y}_{j} + \sum_{j=1}^{n} a_{ij} \mathbf{y}_{j} > \mathbf{b}_{i},\]
so $\mathbf{y} - \mathbf{y}_{j}\mathbf{e}_{j} \notin P$. Hence, $P$ is not anti-blocking. 
Suppose instead that $A \geq \mathbf{0}$. Then suppose that $\mathbf{x}$ satisfies $A\mathbf{x} \leq \mathbf{b}$ and $\mathbf{x} \geq \mathbf{0}$. Then, for all $i \in [d]$, $A(\mathbf{x}-\mathbf{x}_{i}\mathbf{e}_{i}) \leq A\mathbf{x} \leq \mathbf{b}$, and $\mathbf{x}-\mathbf{x}_{i}\mathbf{e}_{i} \geq \mathbf{0}$, so $\mathbf{x}-\mathbf{x}_{i} \mathbf{e}_{i} \in P$. Hence, $P$ is anti-blocking. \qed
\end{proof}

Recall the description of the Todd polytope $M_4$ at the beginning of Section \ref{sec:Todd}. Using Lemma \ref{lem:charanti}, it is easy to verify that $M_4$ is anti-blocking. By Theorem \ref{thm:antiblock}, there always exists a monotone circuit walk of at most $4$ steps to $\mathbf{0}$ for any objective function that is uniquely minimized at $\mathbf{0}$. We obtain the desired alternative argument that the Todd counterexample does not transfer to the circuit setting. 

\begin{corollary}
The Todd polytope $M_{4}$ is anti-blocking, and the LP $\min \{\mathbf{1}^\top\mathbf{x}: \mathbf{x} \in M_{4}\}$ is not a counterexample to the monotone circuit diameter conjecture.
\end{corollary}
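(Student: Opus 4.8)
The plan is to verify the two claims in the corollary directly from the machinery already built up, since the hard work has been front-loaded into the preceding lemmas and theorems. The statement has two parts: first, that $M_4$ is anti-blocking, and second, that the linear program $\min\{\mathbf{1}^\top\mathbf{x}\colon \mathbf{x}\in M_4\}$ is not a counterexample to the monotone circuit diameter conjecture.

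For the first part, I would invoke \cref{lem:charanti}. One checks that the inequality description $M_4=\{\mathbf{x}\in\R^4\colon A\mathbf{x}\le\mathbf{b},\ \mathbf{x}\ge\mathbf{0}\}$ given at the start of \cref{sec:Todd} is irredundant (the polytope has $8$ facets, matching the $4$ rows of $A$ together with the $4$ non-negativity constraints), that $\mathbf{b}=(1,1,8,8)^\top\ge\mathbf{0}$, and that every entry of $A$ is non-negative. Since all three hypotheses of \cref{lem:charanti} hold, $M_4$ is anti-blocking. Since $M_4$ is clearly full-dimensional, \cref{thm:antiblock} applies: there is a monotone circuit walk of length at most $4$ from any point of $M_4$ to $\mathbf{0}$, monotone with respect to any linear objective function uniquely minimized at $\mathbf{0}$.

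For the second part, note that $\mathbf{c}=\mathbf{1}=(1,1,1,1)^\top>\mathbf{0}$, so by the characterization in the proof of \cref{thm:antiblock} (the feasible cone at $\mathbf{0}$ is the non-negative orthant, hence $\mathbf{c}$ is uniquely minimized over $M_4$ at $\mathbf{0}$ precisely when $\mathbf{c}>\mathbf{0}$), the objective $\mathbf{1}^\top\mathbf{x}$ is uniquely minimized at the vertex $\mathbf{0}$. Applying \cref{thm:antiblock} with this $\mathbf{c}$ yields, from every vertex of $M_4$, a monotone circuit walk of length at most $4$ to the optimum $\mathbf{0}$. Since $M_4$ has dimension $d=4$ and $f=8$ facets, this meets the Hirsch bound $f-d=4$, so the monotone circuit diameter for this objective is at most $4$ — in particular not a counterexample. (One could additionally recall from the proof of \cref{thm:todd} that the bound is tight, since $(1,1,8,8)/19$ is not a combination of any three circuits, but this is not needed for the corollary as stated.)

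I do not expect a genuine obstacle here; the corollary is a bookkeeping consequence of \cref{lem:charanti}, \cref{thm:antiblock}, and \cref{lem:anti-block}. The only point requiring a moment of care is confirming that the given description of $M_4$ is irredundant, so that \cref{lem:charanti} applies as stated; this follows from the fact that $M_4$ has exactly $8$ facets, which is recorded immediately after the definition of $M_4$. Everything else is a direct substitution into results already proved.
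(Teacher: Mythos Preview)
Your proposal is correct and follows essentially the same approach as the paper: verify the hypotheses of \cref{lem:charanti} (irredundant description, $\mathbf{b}\ge\mathbf{0}$, $A\ge\mathbf{0}$) to conclude $M_4$ is anti-blocking, then apply \cref{thm:antiblock} with $\mathbf{c}=\mathbf{1}>\mathbf{0}$ to obtain a monotone circuit walk of length at most $4=f-d$ to $\mathbf{0}$. You spell out a few checks (irredundancy via the facet count, full-dimensionality, unique minimization at $\mathbf{0}$) that the paper leaves implicit, but the argument is the same.
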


\section{Bounded Hirsch Counterexamples}\label{sec:Santos}

In this section, we study the three $5$-dimensional spindles $S^{48}_5$, $S^{28}_5$, and $S^{25}_5$ that provide the basis of Santos' construction for obtaining bounded Hirsch counterexamples. The key property of these spindles is that their length of $6$ strictly exceeds their dimension $5$. In contrast, we prove in \cref{sec:Santos-5} that their circuit length is at most $5$.  

Our arguments rely on a careful analysis of certain $2$-faces and, as we will see, extend to all realizations of the spindles that satisfy mild assumptions, including slight perturbations. As an additional benefit, the arguments also enable us in \cref{sec:Santos-highdim} to verify directly that neither of the two explicit (high-dimensional) Hirsch counterexamples from \cite{msw-15}, which are specific instances of Santos' construction, has a circuit length exceeding the dimension.

\subsection{The Circuit Length of the $5$-Dimensional Spindles} \label{sec:Santos-5}

We begin with a key observation on circuit walks on $2$-dimensional polytopes: under certain conditions, one can guarantee that two circuit steps suffice to reach a vertex from a specific set of target vertices. 

\begin{lemma} \label{lem:2-face}
Let $P = \{ \mathbf{x} \in \R^d \colon A \mathbf{x} = \mathbf{b}, B \mathbf{x} \le \mathbf{d} \}$ be a $2$-dimensional polytope, and let $\mathcal{V}$ be a subset of its vertices. Further, let $C(P,\mathcal{V}) = \{ \mathbf{x} \in \R^d \colon A \mathbf{x} = \mathbf{b}, B' \mathbf{x} \le \mathbf{d}' \}$, where the system $B' \mathbf{x} \le \mathbf{d}'$ consists of all inequalities from $B \mathbf{x} \le \mathbf{d}$ that are not tight at any vertex $\mathbf{v} \in \mathcal{V}$.
If $C(P,\mathcal{V})$ is unbounded, then for all vertices $\mathbf{y}_0 \notin \mathcal{V}$ of $P$ there is a circuit walk of length at most $2$ from $\mathbf{y}_0$ to some $\mathbf{v} \in \mathcal{V}$ on $P$.
\end{lemma}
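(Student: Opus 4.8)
The plan is to exploit the geometry of a $2$-dimensional polytope together with the fact that, in $P$, the circuit directions available at any point include all edge directions of $P$, and more. First I would set up notation: write $\mathcal{V} = \{\mathbf{v}_1,\dots,\mathbf{v}_r\}$ for the distinguished vertices; these are cut out from $P$ precisely by the inequalities of $B\mathbf{x}\le\mathbf{d}$ that were \emph{removed} to form $C(P,\mathcal{V})$. So $P = C(P,\mathcal{V}) \cap \{\mathbf{x} : B''\mathbf{x} \le \mathbf{d}''\}$, where $B''\mathbf{x}\le\mathbf{d}''$ collects the removed inequalities, and each removed inequality is tight at some $\mathbf{v}\in\mathcal{V}$. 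Since $C(P,\mathcal{V})$ is a $2$-dimensional unbounded polyhedron (it contains $P$ and has the same affine hull, by the equation system $A\mathbf{x}=\mathbf{b}$), it has at least one unbounded edge, i.e.\ a recession ray; let $\mathbf{r}$ be a primitive recession direction of $C(P,\mathcal{V})$. Crucially $\mathbf{r}$, being an edge direction of $C(P,\mathcal{V})$ — a polyhedron over the same constraint matrix $(A,B)$ up to deletion of rows — is a circuit of $(A,B)$: deleting inequalities only enlarges the set of edge directions, and the circuit set $\mathcal{C}(A,B)$ contains all of them. (One should double-check this: a circuit of a submatrix $B'$ of $B$ with the same $\ker A$ need not a priori be a circuit of $B$; but support-minimality of $B'\mathbf{g}$ among $\ker A$ is implied once we know $B\mathbf{g}$ is support-minimal, and conversely an edge direction of $C(P,\mathcal V)$ is an actual edge direction of some polyhedron $\{A\mathbf x=\mathbf b, B\mathbf x\le \mathbf d'''\}$, hence lies in $\mathcal C(A,B)$ by \cite{g-75}. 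This is the one point requiring a careful line.)

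Next I would do the main geometric step. Take any vertex $\mathbf{y}_0 \ne \mathbf{v}$ of $P$ — here I read the statement as: for every vertex $\mathbf{y}_0$ of $P$ that is not equal to some fixed/targeted $\mathbf{v}$, we reach \emph{some} member of $\mathcal{V}$. Consider the ray $\mathbf{y}_0 + \alpha\mathbf{r}$, $\alpha\ge 0$. Since $\mathbf{r}$ is a recession direction of $C(P,\mathcal{V}) \supseteq P$, this ray stays inside $C(P,\mathcal{V})$. If $\mathbf y_0 + \alpha \mathbf r$ stays in $P$ for all $\alpha \ge 0$, then $P$ is unbounded, contradiction; so the ray leaves $P$, and it does so by violating one of the removed inequalities in $B''\mathbf{x}\le\mathbf{d}''$. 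Let $\alpha_1>0$ be the maximal feasible step, so $\mathbf{y}_1 := \mathbf{y}_0+\alpha_1\mathbf{r} \in P$ lies on the boundary of $P$, on some facet (edge) $F$ of $P$ defined by a removed inequality. That edge $F$ contains a vertex of $\mathcal{V}$ (since the removed inequality is tight at some $\mathbf{v}\in\mathcal V$, and in a $2$-polytope a facet is an edge whose endpoints are vertices, at least one of which is the $\mathbf{v}$ witnessing that inequality). Now from $\mathbf{y}_1$, walk along the edge $F$ toward that vertex $\mathbf{v}\in\mathcal{V}\cap F$: this is an edge step, hence a circuit step, and it is maximal because it terminates at the vertex $\mathbf{v}$. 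This produces a circuit walk $\mathbf{y}_0,\mathbf{y}_1,\mathbf{v}$ of length $2$. If it happens that $\mathbf{y}_1$ already \emph{is} a vertex of $P$ (the ray hit a vertex), then either $\mathbf{y}_1\in\mathcal V$ and we are done in one step, or $\mathbf{y}_1\notin\mathcal V$ but is an endpoint of an edge lying in $\mathcal V$'s removed-inequality facet, and the same second edge step finishes; and if $\mathbf{y}_1 = \mathbf{y}_0$ is impossible since $\alpha_1>0$.

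The small cases need a sentence each: if $\mathbf{y}_0$ is itself already in $\mathcal{V}$ there is nothing to prove (length $0$); if $\mathbf{y}_1\in\mathcal V$ we stop at length $1$. I would also note the walk could conceivably require only edge steps here, but $\mathbf{r}$ is in general a non-edge circuit direction of $P$, which is exactly why we needed to pass to $C(P,\mathcal V)$ and why unboundedness of $C(P,\mathcal V)$ is the hypothesis. The main obstacle I anticipate is the bookkeeping around what "circuit of $(A,B)$" means for the recession ray $\mathbf{r}$ of $C(P,\mathcal{V})$: one must argue cleanly that $\mathbf r \in \mathcal C(A,B)$ — the cleanest route is to note $\mathbf r$ is an unbounded edge direction of the polyhedron $C(P,\mathcal V)$, which is $\{A\mathbf x = \mathbf b,\, B'\mathbf x\le\mathbf d'\}$, and then invoke that circuits are exactly potential edge directions as right-hand sides vary \cite{g-75}, applied to the full system $B$ with suitable right-hand side making $\mathbf r$ an edge — equivalently, $\mathbf r\in\ker A$ and $B'\mathbf r$ support-minimal forces, together with $\mathbf r$ bounding an actual edge, that $B\mathbf r$ is support-minimal in $\{B\mathbf x : \mathbf x\in\ker A\setminus\{\mathbf 0\}\}$. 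A careful half-paragraph handles this; the rest is the elementary $2$-dimensional picture described above.
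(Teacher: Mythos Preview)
Your approach is essentially the paper's: take a recession direction $\mathbf{r}$ of $C(P,\mathcal{V})$, step maximally from $\mathbf{y}_0$ along $\mathbf{r}$ to land on an edge of $P$ incident with some $\mathbf{v}\in\mathcal{V}$, then take one edge step to $\mathbf{v}$. The paper organizes the argument slightly more cleanly by first disposing of the case that $\mathbf{y}_0$ is adjacent to some $\mathbf{v}\in\mathcal{V}$---this simultaneously guarantees that $C(P,\mathcal{V})$ is pointed (it sits inside the feasible cone at $\mathbf{y}_0$), so an \emph{extreme} ray of its recession cone exists, and then argues that such an extreme ray is a circuit because the recession cone is a union of cells of the elementary arrangement $\mathcal{H}(B)$; this sidesteps the ``circuit of $B'$ vs.\ circuit of $B$'' bookkeeping you flagged.
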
 
\begin{proof}

First, note that any vertex $\mathbf{v} \in \mathcal{V}$ can be reached via a single circuit step starting from any point on an edge of $P$ incident with $\mathbf{v}$. In particular, this is true for the two adjacent vertices of $\mathbf{v}$. Thus, if every vertex of $P$ is adjacent to a vertex in $\mathcal{V}$, then we are done, so suppose there exists a vertex $\mathbf{y}_0$ of $P$ that is not adjacent to any vertex in $\mathcal{V}$. It suffices to show that there is a point $\mathbf{y}_1$ on an edge of $P$ incident with some vertex $\mathbf{v} \in \mathcal{V}$ that can be reached from $\mathbf{y}_0$ in at most one circuit step.

Note that, because $\mathbf{y}_{0}$ is not adjacent to any vertex in $\mathcal{V}$, the inequalities tight at $\mathbf{y}_{0}$ are not tight for any vertex in $\mathcal{V}$. Hence, $C(P, \mathcal{V})$ is contained in the feasible cone at $\mathbf{y}_{0}$ and is therefore contained in a proper pointed cone. It follows that, since $C(P, \mathcal{V})$ is also unbounded by hypothesis, its recession cone contains an extreme ray generated by some $\mathbf{g}\in \R^d$. Note that $\mathbf{g}$ is a circuit of $P$, because the recession cone of $C(P,\mathcal{V})$ is a union of cones of the elementary arrangement of $P$.
.
We further have that $\mathbf{y}_0 \in C(P,\mathcal{V})$ since $P \subset C(P,\mathcal{V})$. 
It follows that the ray given by $\mathbf{y}_0 + \mu \mathbf{g}$ for all $\mu \ge 0$ is contained in $C(P,\mathcal{V})$ and follows a circuit direction. Since $P$ is bounded but $C(P,\mathcal{V})$ is not, this ray intersects an edge of $P$ incident with some vertex $\mathbf{v} \in \mathcal{V}$ in a point $\mathbf{y}_1$. Thus, $\mathbf{y}_1$ can be reached from $\mathbf{y}_0$ in a single circuit step. \qed 

\end{proof}

\begin{figure}[hbt]
\centering
\begin{tikzpicture}[every node/.style={inner sep=5pt}]
    \coordinate (y0) at (0,0);
    \coordinate (v) at (4,.25);
    \draw[thick] (0,0) -- (.5,.75) -- (3,1.75) -- (v) -- (2.5,-1.5) -- (.5,-1) -- cycle;
    \draw[thick,dashed,-stealth] (3,1.75) -- (4.25,2.25);
    \draw[thick,dashed,-stealth] (2.5,-1.5) -- (4,-1.875);
    \node (p) at (3.75,-1.35) {$C(P,\{\mathbf{v}\})$};
    \node (p) at (1,-.75) {$P$};

    \coordinate (y1) at (125/38,50/38); 
    \draw[black,fill] (v) circle (2pt) node[anchor=west] {$\mathbf{v}$};
    \draw[red,fill] (y0) circle (2pt) node[anchor=east] {$\mathbf{y}_0$};
    \draw[red,fill] (y1) circle (2pt) node[anchor=west] {$\mathbf{y}_1$};
    \draw[very thick,red,-stealth] (y0) -- (y1);
    \draw[very thick,red,-stealth] (y1) -- (v);

\end{tikzpicture}
\caption{A polytope $P$ with a vertex $\mathbf{v}$ and the corresponding polyhedron $C(P,\{\mathbf{v}\})$ as defined in \cref{lem:2-face}. The recession cone of $C(P,\{\mathbf{v}\})$ is spanned by the two dashed edge directions. Starting at the vertex $\mathbf{y}_0$, we can follow any of them to a point $\mathbf{y}_1$ on one of the two edges of $P$ incident with $\mathbf{v}$. This gives a circuit walk on $P$ from $\mathbf{y}_0$ to $\mathbf{v}$ of length $2$.}
\label{fig:2-face-example}
\end{figure}
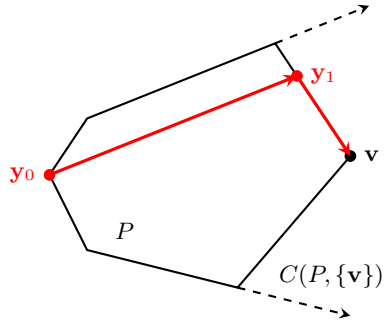

Informally, \cref{lem:2-face} guarantees that for any starting vertex $\mathbf{y}_0$ of a $2$-dimensional polytope $P$, there is always a circuit step to an edge incident with some vertex in $\mathcal{V}$ if the omission of all inequalities that are tight at any of the vertices in $\mathcal{V}$ would make $P$ unbounded. While the statement and proof are phrased in terms of a vertex $\mathbf{y_0}$, the statement remains true for any point in $P$.

For each of the three spindles, we will now show the following: within $3$ edge steps from one apex, one can reach a $2$-face that contains the other apex and satisfies the condition of \cref{lem:2-face} if we choose the other apex for the vertex $\mathbf{v}$. We first explain the arguments in detail for the $5$-dimensional spindle $S_5^{48}$ with $48$ facets and length $6$ that Santos' original counterexample in \cite{s-11} is constructed from. Following \cite[Theorem 3.1]{s-11}, it is given by the description $S_5^{48} = \{ \mathbf{x} \in \R^5 \colon A^+ \mathbf{x} \le \mathbf{1}, A^- \mathbf{x} \le \mathbf{1} \}$ where $\mathbf{1}$ denotes the all-one vector and $A^+$ and $A^-$ are the matrices
\[ \makeatletter\setlength\BA@colsep{4pt}\makeatother
  A^+ =\; \begin{blockarray}{cccccc}
  \begin{block}{(ccccc)r}
  1 & \pm 18 & 0 & 0 & 0 & \;\mathsf{ 1^\pm} \\
  1 & 0 & \pm 18 & 0 & 0 & \;\mathsf{ 2^\pm} \\
  1 & 0 & 0 & \pm 45 & 0 & \;\mathsf{ 3^\pm} \\
  1 & 0 & 0 & 0 & \pm 45 & \;\mathsf{ 4^\pm} \\
  1 & \pm 15 & 15 & 0 & 0 & \;\mathsf{ 5^\pm} \\
  1 & \pm 15 & -15 & 0 & 0 & \;\mathsf{ 6^\pm} \\
  1 & 0 & 0 & \pm 30 & 30 & \;\mathsf{ 7^\pm} \\
  1 & 0 & 0 & \pm 30 & -30 & \;\mathsf{ 8^\pm} \\
  1 & 0 & \pm 10 & 40 & 0 & \;\mathsf{ 9^\pm} \\
  1 & 0 & \pm 10 & -40 & 0 & \;\mathsf{10^\pm} \\
  1 & \pm 10 & 0 & 0 & 40 & \;\mathsf{11^\pm} \\
  1 & \pm 10 & 0 & 0 & -40 & \;\mathsf{12^\pm} \\
  \end{block}
  \end{blockarray},\;
  %
  %
  A^- =\; \begin{blockarray}{cccccc}
  \begin{block}{(ccccc)r}
  -1 & 0 & 0 & 0 & \pm 18 & \;\mathsf{13^\pm} \\
  -1 & 0 & 0 & \pm 18 & 0 & \;\mathsf{14^\pm} \\
  -1 & \pm 45 & 0 & 0 & 0 & \;\mathsf{15^\pm} \\
  -1 & 0 & \pm 45 & 0 & 0 & \;\mathsf{16^\pm} \\
  -1 & 0 & 0 & 15 & \pm 15 & \;\mathsf{17^\pm} \\
  -1 & 0 & 0 & -15 & \pm 15 & \;\mathsf{18^\pm} \\
  -1 & \pm 30 & 30 & 0 & 0 & \;\mathsf{19^\pm} \\
  -1 & \pm 30 & -30 & 0 & 0 & \;\mathsf{20^\pm} \\
  -1 & 40 & 0 & \pm 10 & 0 & \;\mathsf{21^\pm} \\
  -1 & -40 & 0 & \pm 10 & 0 & \;\mathsf{22^\pm} \\
  -1 & 0 & 40 & 0 & \pm 10 & \;\mathsf{23^\pm} \\
  -1 & 0 & -40 & 0 & \pm 10 & \;\mathsf{24^\pm} \\
  \end{block}
  \end{blockarray}
\]

with $24$ rows each, labeled $\mathsf{1^\pm}$ to $\mathsf{24^\pm}$. We will also use these labels for the corresponding inequalities and the facets that they define.  
The two apices of $S_5^{48}$ are $\mathbf{v}^+ = (1,0,0,0,0)$ and $\mathbf{v}^- = (-1,0,0,0,0)$. The spindle $S_5^{48}$ can equivalently be written as $S_5^{48} = (C^+ + \mathbf{v}^+) \cap (C^- + \mathbf{v}^-)$ for the two cones $C^+ = \{ \mathbf{x} \in \R^5 \colon A^+ \mathbf{x} \le \mathbf{0} \}$ and $C^- = \{ \mathbf{x} \in \R^5 \colon A^- \mathbf{x} \le \mathbf{0} \}$.

Among the properties of $S_5^{48}$ listed in \cite{s-11} are the following two symmetries. They are direct consequences of the symmetry in the coefficients of $A^\pm$.
\begin{proposition}[\cite{s-11}] \label{prop:santos-symmetry}
The following linear transformations of $\R^5$ leave $S_5^{48}$ invariant:
\begin{myenumerate}
    \item $(x_1,x_2,x_3,x_4,x_5) \mapsto (-x_1,x_5,x_4,x_2,x_3)$, \label{enum:santos-symmetry-i}
    \item $(x_1,x_2,x_3,x_4,x_5) \mapsto (x_1,x_3,x_2,x_5,x_4)$.  \label{enum:santos-symmetry-ii}
\end{myenumerate}
\end{proposition}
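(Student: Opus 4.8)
The statement to prove is \cref{prop:santos-symmetry}: the two stated linear maps of $\R^5$ leave $S_5^{48}$ invariant. Since $S_5^{48}$ is cut out by the two inequality systems $A^+\mathbf{x}\le\mathbf{1}$ and $A^-\mathbf{x}\le\mathbf{1}$, it suffices to show that each linear map $\varphi$ permutes the set of all $48$ facet-defining inequalities $\mathsf{1^\pm},\dots,\mathsf{24^\pm}$; equivalently, that applying $\varphi^{-1}$ (as a substitution of variables) to the rows of $\begin{psmallmatrix}A^+\\A^-\end{psmallmatrix}$ yields the same $48$ row vectors, possibly in a different order. Because the right-hand side is the all-ones vector in both cases, no rescaling bookkeeping is needed: a row $\mathbf{a}^\top$ maps to the row $\mathbf{a}^\top\varphi^{-1}$, and we just need this to land back in the list. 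So the whole proof reduces to a finite check on $48$ vectors for each of the two maps.

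First I would record the matrix of each transformation and note it is an involution (each map squares to the identity — immediate from the formulas — so $\varphi^{-1}=\varphi$, which simplifies the substitution). Then I would observe the natural block structure: map \cref{enum:santos-symmetry-ii}, $(x_1,x_2,x_3,x_4,x_5)\mapsto(x_1,x_3,x_2,x_5,x_4)$, fixes $x_1$ and swaps the pairs $(x_2,x_3)$ and $(x_4,x_5)$; because the coefficient $1$ (resp.\ $-1$) in the first column is preserved, it maps $A^+$-rows to $A^+$-rows and $A^-$-rows to $A^-$-rows. Within $A^+$ one checks the induced permutation of the twelve row-templates $\mathsf{1}$--$\mathsf{12}$ (for instance $\mathsf{1^\pm}\leftrightarrow\mathsf{2^\pm}$ via $(1,\pm18,0,0,0)\leftrightarrow(1,0,\pm18,0,0)$, $\mathsf{3^\pm}\leftrightarrow\mathsf{4^\pm}$, $\mathsf{5^\pm}\leftrightarrow\mathsf{6^\pm}$ since swapping $x_2,x_3$ turns $15x_2+15x_3$-type rows into each other up to the already-present $\pm$, $\mathsf{7^\pm}\leftrightarrow\mathsf{8^\pm}$, $\mathsf{9^\pm}\leftrightarrow\mathsf{11^\pm}$, $\mathsf{10^\pm}\leftrightarrow\mathsf{12^\pm}$), and similarly for $A^-$. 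Map \cref{enum:santos-symmetry-i}, $(x_1,\dots,x_5)\mapsto(-x_1,x_5,x_4,x_2,x_3)$, negates $x_1$, so it sends the first-column entry $+1$ to $-1$ and vice versa; hence it interchanges the $A^+$ system with the $A^-$ system, and one checks it matches the templates $\mathsf{1}$--$\mathsf{12}$ of $A^+$ with the templates $\mathsf{13}$--$\mathsf{24}$ of $A^-$ under the coordinate relabeling $x_2\mapsto x_5,\ x_3\mapsto x_4,\ x_4\mapsto x_2,\ x_5\mapsto x_3$. I would present a short correspondence table rather than all $24$ equalities, and remark that the $\pm$ sign within each template is self-consistent because the sign appears on a single coordinate in each row and the map only permutes and possibly negates coordinates.

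The only genuinely delicate bookkeeping — and the step I expect to be the main obstacle — is verifying that the $\pm$ signs are handled correctly for the rows where the ``$\pm$'' attaches to a coordinate that the transformation moves or negates. For map \cref{enum:santos-symmetry-ii} this is benign since $x_1$ (never carrying the $\pm$) is fixed and the moved coordinates only get permuted, so $\{\mathsf{i^+},\mathsf{i^-}\}$ maps to $\{\mathsf{j^+},\mathsf{j^-}\}$ as an unordered pair, which is all we need. For map \cref{enum:santos-symmetry-i}, the $\pm$ in each $A^+$-row sits on $x_2,x_3,x_4$, or $x_5$, none of which is negated (only $x_1$ is), so again the pair $\{\mathsf{i^+},\mathsf{i^-}\}$ is mapped to a pair of $A^-$-rows and the correspondence respects the sign convention. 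Once this is checked template-by-template, invariance follows: $\varphi(S_5^{48})=\{\mathbf{x}:(A^+\varphi^{-1})\mathbf{x}\le\mathbf{1},\,(A^-\varphi^{-1})\mathbf{x}\le\mathbf{1}\}$, and since the rows of $A^\pm\varphi^{-1}$ are a permutation of the rows of $A^+,A^-$ combined, this is exactly $S_5^{48}$. I would close by noting, as the statement already says, that all of this is simply an unwinding of the visible symmetry in the coefficient pattern of $A^\pm$, so the verification is routine.
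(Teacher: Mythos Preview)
Your approach is essentially the one the paper has in mind: the paper does not give a formal proof but simply remarks that the proposition is a ``direct consequence of the symmetry in the coefficients of $A^\pm$'', and your row-by-row verification is exactly the way to make that precise. The template-by-template correspondence you sketch is correct in spirit and would constitute a complete proof.

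There is, however, one slip you should fix: map \cref{enum:santos-symmetry-i} is \emph{not} an involution. If you compute $\varphi_1^2$ for $\varphi_1(x_1,\dots,x_5)=(-x_1,x_5,x_4,x_2,x_3)$ you get $(x_1,x_3,x_2,x_5,x_4)$, which is map \cref{enum:santos-symmetry-ii}, not the identity; so $\varphi_1$ has order $4$. This does not damage your argument in any essential way, since all you really need is that substituting $\varphi_1$ (or equivalently $\varphi_1^{-1}$) into the inequalities permutes the $48$ rows, and for that it is irrelevant whether $\varphi_1^{-1}=\varphi_1$. Just drop the involution claim for map \cref{enum:santos-symmetry-i} (it is genuinely true for map \cref{enum:santos-symmetry-ii}) and either compute $\varphi_1^{-1}$ directly or, more simply, verify that composing each row $\mathbf{a}^\top$ with $\varphi_1$ lands back in the list; bijectivity on the finite set of rows then gives $\varphi_1(S_5^{48})=S_5^{48}$.
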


We note that the transformation given in \cref{prop:santos-symmetry}\cref{enum:santos-symmetry-i} switches the roles of $A^+$ and $A^-$ and of $v^+$ and $v^-$, while \cref{enum:santos-symmetry-ii} is a permutation of coordinates that preserves the matrices $A^\pm$ up to reordering rows.
With these observations we are now able to bound the circuit length of $S_5^{48}$.

\begin{theorem} \label{thm:santos-length}
The circuit length of $S_5^{48}$ is at most $5$.
\end{theorem}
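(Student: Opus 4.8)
The plan is to combine the structural observation from \cref{lem:2-face} with a short explicit walk of edge steps from one apex. The target is to show that from $\mathbf{v}^+$ one can reach $\mathbf{v}^-$ in at most $5$ circuit steps, using at most two non-edge steps; the symmetry in \cref{prop:santos-symmetry} will let us do all the work ``on one side.''

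\medskip
\noindent\textbf{Step 1: Find a good $2$-face containing $\mathbf{v}^-$.} First I would identify a $2$-face $F$ of $S_5^{48}$ that contains the apex $\mathbf{v}^-$ and is within $3$ edge steps of $\mathbf{v}^+$. Since $S_5^{48}$ is a spindle, $\mathbf{v}^-$ lies on a facet of $C^-+\mathbf{v}^-$, and one can descend from $\mathbf{v}^+$ along edges of the cone $C^+$ (using the explicit rows of $A^+$) into lower-dimensional faces; after $3$ edge steps one lands in a face of dimension $\le 2$. I would pick the particular sequence of tight inequalities that both (a) is realizable as an edge walk from $\mathbf{v}^+$, and (b) terminates on a $2$-face $F$ whose supporting hyperplanes are among the rows of $A^-$ that vanish at $\mathbf{v}^-$ — so that $\mathbf{v}^-$ is a vertex of $F$. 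Concretely this means selecting three of the inequalities $\mathsf{1^\pm}$--$\mathsf{12^\pm}$ from $A^+$ that are compatible along an edge path, leaving a $2$-face on which $\mathbf{v}^-$ sits.

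\medskip
\noindent\textbf{Step 2: Verify the unboundedness condition of \cref{lem:2-face}.} With $F$ and $\mathcal{V}=\{\mathbf{v}^-\}$ in hand, I would check that $C(F,\{\mathbf{v}^-\})$ — the relaxation of $F$ obtained by deleting every inequality tight at $\mathbf{v}^-$ — is unbounded. Geometrically: the facets of $F$ through $\mathbf{v}^-$ are exactly two edges, and removing them should leave a $2$-dimensional unbounded polyhedron, i.e. the remaining facet-defining inequalities of $F$ (coming from $A^+$) do not by themselves bound $F$ in all directions. This is the crux: I expect the main obstacle to be confirming that the deletion genuinely produces an unbounded set, which requires knowing precisely which inequalities are tight at $\mathbf{v}^-$ on $F$ and checking a $2$-dimensional LP/recession-cone computation. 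The symmetry \cref{enum:santos-symmetry-i} swapping $A^\pm$ and $\mathbf{v}^\pm$ is what makes this tractable: it says the ``same'' geometry appears near $\mathbf{v}^-$ as near $\mathbf{v}^+$, so the two edges of $F$ at $\mathbf{v}^-$ and the leftover facets can be described by transported versions of the $A^+$-data.

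\medskip
\noindent\textbf{Step 3: Assemble the walk and count.} Given Steps 1--2, \cref{lem:2-face} yields a circuit walk of length at most $2$ from the endpoint $\mathbf{y}_0$ of the edge walk (a vertex of $F$) to $\mathbf{v}^-$ within $F$, and since $F$ is a face of $S_5^{48}$, every such circuit step is also a valid (maximal) circuit step in $S_5^{48}$ — because the inequalities defining $F$ inside $S_5^{48}$ are among the facet inequalities of $S_5^{48}$, so maximality in $F$ forces maximality in $S_5^{48}$ along those directions, or one argues the omitted inequalities stay slack. Concatenating the $3$ edge steps from $\mathbf{v}^+$ with these $\le 2$ circuit steps gives a circuit walk of length $\le 5$ from $\mathbf{v}^+$ to $\mathbf{v}^-$, hence circuit length $\le 5$. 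I would also remark that the two non-edge steps are the only ones leaving the $1$-skeleton, matching the claim in \cref{sec:Santos-5} about ``no more than two non-edge steps,'' and that the whole argument uses only which rows of $A^\pm$ are tight at the relevant vertices, so it survives mild perturbations that preserve this combinatorial incidence structure.
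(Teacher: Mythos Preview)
Your outline matches the paper's approach: use the symmetry of \cref{prop:santos-symmetry}\cref{enum:santos-symmetry-i} to reduce to one direction, take three edge steps from $\mathbf{v}^+$ into a $2$-face $F$ containing $\mathbf{v}^-$, then invoke \cref{lem:2-face} to finish in two circuit steps. However, there is a slip in Step~1 and a genuine gap in Step~2.

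The slip: the $2$-face $F$ must be cut out by three inequalities from $A^-$ (the paper uses $\mathsf{15^+},\mathsf{19^+},\mathsf{21^+}$), not from $A^+$ as your last sentence of Step~1 says; the edge walk from $\mathbf{v}^+$ successively enters $A^-$-facets, and after three steps you sit on such a face.

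The gap: Step~2 is, as you say, the crux, but you leave it as ``a $2$-dimensional LP/recession-cone computation'' without carrying it out, and it is not as immediate as you suggest. Removing the inequalities tight at $\mathbf{v}^-$ deletes all of $A^-$, so $C(F,\{\mathbf{v}^-\})$ is the $2$-plane of $F$ intersected with the $A^+$-inequalities; one must show these do not bound it. The paper does not verify this directly. Instead it proves an auxiliary claim: the polyhedron $Q$ given by all of $A^-$ together with just the pair $\mathsf{4^\pm}$ is \emph{bounded} (shown by a short conic-combination argument on the rows). From boundedness of $Q$ one argues that the face $F_Q$ of $Q$ cut by $\mathsf{15^+},\mathsf{19^+},\mathsf{21^+}$ is a quadrilateral whose fourth vertex $\mathbf{w}$ lies outside $F$, and that the feasible cone of $F_Q$ at $\mathbf{w}$ is precisely the recession cone of $C(F,\{\mathbf{v}^-\})$, hence nontrivial. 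This indirection via Claim~($\ast$) is the substantive content of the proof and is what makes the bound robust under perturbation; your proposal identifies the right target but does not supply this (or any) argument for it.
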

\begin{proof}
By \cref{prop:santos-symmetry}\cref{enum:santos-symmetry-i}, $S_5^{48}$ is symmetric under a linear transformation that sends $\mathbf{v}^+$ to $\mathbf{v}^-$ and vice versa. Hence, from any circuit walk from $\mathbf{v}^+$ to $\mathbf{v}^-$ on $S_5^{48}$, one can immediately obtain a circuit walk from $\mathbf{v}^-$ to $\mathbf{v}^+$ by applying the same linear transformation.
It therefore suffices to find a circuit walk of length at most $5$ in one of the directions, say from $\mathbf{v}^+$ to $\mathbf{v}^-$, to prove the statement. 

Let $F$ be the $2$-face of $S_5^{48}$ defined by inequalities $\mathsf{15}^+$, $\mathsf{19}^+$, and $\mathsf{21}^+$. \Cref{fig:santos-face} shows the graph of $F$, along with shortest paths from $\mathbf{v}^+$ to four vertices of $F$ that are at distance $3$ from $\mathbf{v}^+$ in the graph of $S_5^{48}$ (graph computations were done using Polymake \cite{polymake}). For this face $F$, we now claim that the polyhedron $C(F,\{\mathbf{v}^-\})$ as defined in \cref{lem:2-face} is unbounded. It then follows from \cref{lem:2-face} that from either of the four vertices that are at distance $3$ from $\mathbf{v}^+$, there is a circuit walk on $F$ to the other apex $\mathbf{v^-}$ of length at most $2$. Since circuits of $F$ are also circuits of $S_5^{48}$, this means that there is a circuit walk on $S_5^{48}$ of length at most $5$ from $\mathbf{v}^+$ to $\mathbf{v}^-$.

To verify that $C(F,\{\mathbf{v}^-\})$ is unbounded, it suffices to show the following property:

\begin{claim}[$\ast$] \label{claim:bounded}
The polyhedron $Q$ given by inequalities $\mathsf{13^\pm}$ to $\mathsf{24^\pm}$ together with $\mathsf{4^\pm}$ is bounded.
\end{claim}

Before we give a proof, let us first argue why the claim implies unboundedness of $C(F,\{\mathbf{v}^-\})$.

The inequalities $\mathsf{13^\pm}$ to $\mathsf{24^\pm}$ and $\mathsf{4^\pm}$ that determine $Q$ must also be facet-defining for $Q$ since each of them defines a facet of $S_5^{48}$ and $S_5^{48} \subset Q$.
Let $F_Q$ be the $2$-face of $Q$ defined by $\mathsf{15^+}$, $\mathsf{19^+}$, and $\mathsf{21^+}$. 
Since $S_5^{48} \subset Q$, we also have that $F \subset F_Q$. Hence, $\mathbf{v}^-$ and its two adjacent vertices $\mathsf{4^+13^+17^+23^+}$ and $\mathsf{4^-13^-17^-23^-}$ in \cref{fig:santos-face} are also vertices of $F_Q$. Moreover, any edge of $F_Q$ that does not contain $\mathbf{v}^-$ can only be defined by $\mathsf{4^+}$ or $\mathsf{4^-}$ because $\mathbf{v}^-$ is in all other facets of $Q$. 
In fact, both $\mathsf{4}^+$ and $\mathsf{4}^-$ define edges of $F_Q$. To see this, note that the (proper) face of $F_Q$ defined by $\mathsf{4}^+$ contains an edge of $F$ (namely, the edge between vertices $\mathsf{4^+13^+17^+23^+}$ and $\mathsf{4^+11^+}$ in \cref{fig:santos-face}) and, hence, must be an edge of $F_Q$ itself. A similar argument shows that $\mathsf{4^-}$ defines an edge of $F_Q$, too, which is incident with the vertex $\mathsf{4^-13^-17^-23^-}$. These two edges of $F_Q$ must be distinct as the two vertices $\mathsf{4^+13^+17^+23^+}$ and $\mathsf{4^-13^-17^-23^-}$ would otherwise be adjacent on $F_Q$ and therefore also on $F$; a contradiction (see \cref{fig:santos-face}).

Now suppose that $Q$ is bounded. Then $F_Q$ is bounded, which means that the two edges of $F_Q$ defined by $\mathsf{4^+}$ and $\mathsf{4^-}$ must intersect in a vertex $\mathbf{w}$ of $F_Q$. Further observe that for each vertex of $F$ in \cref{fig:santos-face}, at most one of the two inequalities $\mathsf{4^\pm}$ is tight. This means that $\mathbf{w}$ cannot be a vertex of $F$. Thus, $F_Q$ has exactly four vertices, namely $\mathbf{v}^-$, $\mathsf{4^+13^+17^+23^+}$, $\mathsf{4^-13^-17^-23^-}$, and $\mathbf{w}$. The first three are also vertices of $F$ and therefore satisfy all inequalities that define edges of $F$. Since $F_Q$ is bounded and $F \subset F_Q$, any inequality that defines an edge of $F$ but not of $F_Q$ must cut off a vertex of $F_Q$, and that can only be $\mathbf{w}$. Since each edge of $C(F,\{\mathbf{v}^-\})$ is either defined by $\mathsf{4^\pm}$ or by an inequality that defines an edge of $F$ but not of $F_Q$, the recession cone of $C(F,\{\mathbf{v}^-\})$ is therefore precisely the $2$-dimensional feasible cone of $F_Q$ at $\mathbf{w}$. This means that $C(F,\{\mathbf{v}^-\})$ is unbounded as desired.\\

It remains to prove Claim ($\ast$). By definition of $Q$, the recession cone of $Q$ consists of all vectors $\mathbf{x} \in C^-$ that further satisfy the two facet-defining inequalities $\mathsf{4^\pm}$ for the cone $C^+$ (with right-hand side $0$). Since the two rows $\mathsf{4^\pm}$ of $A^+$ add up to $(2,0,0,0,0)$, all vectors $\mathbf{x}$ in the recession cone of $Q$ must satisfy $x_1 \le 0$. Further, the sum of all rows $\mathsf{13^\pm}$ to $\mathsf{24^\pm}$ of $A^-$ equals $(-24,0,0,0,0)$. This implies that $(-1,0,0,0,0)$ is in the strict interior of the polar cone of $C^-$. For the linear objective function $(-1,0,0,0,0)^\top \mathbf{x}$, the origin $\mathbf{0}$ is therefore the unique maximizer over $C^-$. Hence, any nonzero vector $\mathbf{x} \in C^-$ must satisfy $x_1>0$. The recession cone of $Q$ therefore only contains $\mathbf{0}$, which means that $Q$ must be bounded. \qed
\end{proof}

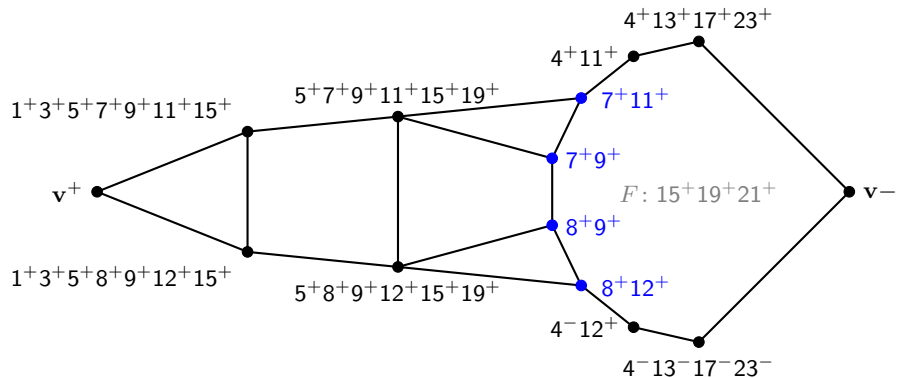
\begin{figure}[hbt]
\centering
\begin{tikzpicture}[every node/.style={inner sep=5pt}]
    \coordinate (v155) at (-1.2246467991473532e-16, -2.0);
    \coordinate (v69) at (-0.8677674782351164, -1.8019377358048383);
    \coordinate (v65) at (-1.5636629649360596, -1.246979603717467);
    \coordinate (v56) at (-1.9498558243636472, -0.4450418679126288);
    \coordinate (v68) at (-1.9498558243636472, 0.4450418679126288);
    \coordinate (v53) at (-1.5636629649360596, 1.246979603717467);
    \coordinate (v54) at (-0.8677674782351164, 1.8019377358048383);
    \coordinate (v98) at (-1.2246467991473532e-16, 2.0);
    \coordinate (v0) at (2, 0);
    \coordinate (v255) at (-6,.8);
    \coordinate (v99) at (-4,1);
    \coordinate (v29) at (-6,-.8);
    \coordinate (v46) at (-4,-1);
    \coordinate (v132) at (-8,0);
    
    \draw[thick] (v0) -- (v98);
    \draw[thick] (v0) -- (v155);
    \draw[thick] (v65) -- (v46);
    \draw[thick] (v65) -- (v56);
    \draw[thick] (v65) -- (v69);
    \draw[thick] (v98) -- (v54);
    \draw[thick] (v99) -- (v255);
    \draw[thick] (v99) -- (v46);
    \draw[thick] (v99) -- (v68);
    \draw[thick] (v99) -- (v53);
    \draw[thick] (v132) -- (v255);
    \draw[thick] (v132) -- (v29);
    \draw[thick] (v68) -- (v53);
    \draw[thick] (v68) -- (v56);
    \draw[thick] (v69) -- (v155);
    \draw[thick] (v46) -- (v29);
    \draw[thick] (v46) -- (v56);
    \draw[thick] (v53) -- (v54);
    \draw[thick] (v29) -- (v255);

    \draw[black,fill] (v54) circle (2pt) node[anchor=east] {$\mathsf{4^+11^+}$};
    \draw[black,fill] (v98) circle (2pt) node[anchor=south] {$\mathsf{4^+13^+17^+23^+}$};
    \draw[black,fill] (v0) circle (2pt) node[anchor=west] {$\mathbf{v}-$};
    \draw[black,fill] (v155) circle (2pt) node[anchor=north] {$\mathsf{4^-13^-17^-23^-}$};
    \draw[black,fill] (v69) circle (2pt) node[anchor=east] {$\mathsf{4^-12^+}$};
    \draw[blue,fill] (v65) circle (2pt) node[anchor=west,inner sep=7.5pt] {$\mathsf{8^+12^+}$};
    \draw[blue,fill] (v56) circle (2pt) node[anchor=west] {$\mathsf{8^+9^+}$};
    \draw[blue,fill] (v68) circle (2pt) node[anchor=west] {$\mathsf{7^+9^+}$};
    \draw[blue,fill] (v53) circle (2pt) node[anchor=west,inner sep=7.5pt] {$\mathsf{7^+11^+}$};
    \draw[black,fill] (v132) circle (2pt) node[anchor=east] {$\mathbf{v}^+$};
    \draw[black,fill] (v255) circle (2pt) node[anchor=south east] {$\mathsf{1^+3^+5^+7^+9^+11^+15^+}$};
    \draw[black,fill] (v99) circle (2pt) node[anchor=south] {$\mathsf{5^+7^+9^+11^+15^+19^+}$};
    \draw[black,fill] (v29) circle (2pt) node[anchor=north east] {$\mathsf{1^+3^+5^+8^+9^+12^+15^+}$};
    \draw[black,fill] (v46) circle (2pt) node[anchor=north] {$\mathsf{5^+8^+9^+12^+15^+19^+}$};

    \node[gray] at (0,0) {$F \colon \mathsf{15^+19^+21^+}$};
\end{tikzpicture}
\caption{The subgraph of $S_5^{48}$ induced by all vertices of the $2$-face $F$ defined by inequalities $\mathsf{15^+}$, $\mathsf{19^+}$, and $\mathsf{21^+}$, and by vertices on a shortest path from $\mathbf{v}^+$ to a vertex of $F$. Vertex labels indicate which inequalities are tight. For the vertices of $F$, we omitted the facets containing $F$ from their labels. The four highlighted vertices are at distance $3$ from $\mathbf{v}^+$. 
}
\label{fig:santos-face}
\end{figure}

Our proof of \cref{thm:santos-length} exhibits that it is possible to verify that the circuit length of $S_5^{48}$ is strictly less than the combinatorial length through the geometry of its $2$-faces: From each of the vertices $\mathsf{7^+11^+15^+19^+21^+}$ and $\mathsf{8^+12^+15^+19^+21^+}$ in \cref{fig:santos-face}, there is an edge walk of length $3$ to $\mathbf{v}^-$ that stays on the $2$-face $F$. 
In contrast, two circuit steps suffice to reach $\mathbf{v}^-$ as shown above.

We further note that Claim ($\ast$) is the only geometric property of $S_5^{48}$ used in the proof of \cref{thm:santos-length}. The remainder of the argument does not depend on the particular realization. 
Indeed, consider a different realization $\widetilde{S}_5^{48}$ whose facets (and facet-defining inequalities) are labeled $\mathsf{1}^\pm$ to $\mathsf{24}^\pm$ again such that facets of $\widetilde{S}_5^{48}$ and $S_5^{48}$ with the same label are combinatorially equivalent. Suppose that $\widetilde{S}_5^{48}$ satisfies Claim ($\ast$). As an immediate consequence of the proof of \cref{thm:santos-length}, there is a circuit walk of length at most $5$ from one of the apices of $\widetilde{S}_5^{48}$ to the other one. However, since $\widetilde{S}_5^{48}$ might not be symmetric under the linear transformation in \cref{prop:santos-symmetry}\cref{enum:santos-symmetry-i}, this does not directly imply the existence of a short walk in the converse direction as well. In order to extend our bound on the circuit length to $\widetilde{S}_5^{48}$, we require validity of a `symmetric' version of Claim ($\ast$). More precisely, we obtain the following corollary. 

\begin{corollary} \label{cor:santos-length-realization}
Let $\widetilde{S}_5^{48}$ be a realization of $S_5^{48}$ with facets and facet-defining inequalities labeled $\mathsf{1}^\pm$ to $\mathsf{24}^\pm$ such that each facet is combinatorially equivalent to the facet of $S_5^{48}$ with the same label. Suppose that $\widetilde{S}_5^{48}$ satisfies both of the following properties:
\begin{myenumerate}
    \item The polyhedron given by inequalities $\mathsf{13^\pm}$ to $\mathsf{24^\pm}$ together with one of the pairs $\mathsf{3^\pm}$ or $\mathsf{4^\pm}$ is bounded. \label{enum:santos-length-realization-i}
    \item The polyhedron given by inequalities $\mathsf{1^\pm}$ to $\mathsf{12^\pm}$ together with one of the pairs $\mathsf{15^\pm}$ or $\mathsf{16^\pm}$ is bounded. \label{enum:santos-length-realization-ii}
\end{myenumerate}
Then the circuit length of $\widetilde{S}_5^{48}$ is at most $5$. 
\end{corollary}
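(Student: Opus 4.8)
The plan is to run the proof of \cref{thm:santos-length} twice, once for each of the two directions of a circuit walk between the apices $\mathbf{v}^+$ and $\mathbf{v}^-$, since for an arbitrary realization $\widetilde{S}_5^{48}$ we can no longer appeal to the linear symmetry of \cref{prop:santos-symmetry}\cref{enum:santos-symmetry-i} to obtain a walk in one direction from a walk in the other. The key point to exploit is the remark following \cref{thm:santos-length}: Claim~($\ast$) is the \emph{only} non-combinatorial ingredient in that proof, so replacing it by the boundedness hypotheses \cref{enum:santos-length-realization-i} and \cref{enum:santos-length-realization-ii} should let the whole argument go through, provided we are careful about which $2$-face and which boundedness statement play the roles of $F$ and Claim~($\ast$) in each direction.

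\textbf{From $\mathbf{v}^+$ to $\mathbf{v}^-$.} First I would isolate what the proof of \cref{thm:santos-length} actually uses. Apart from Claim~($\ast$), every ingredient is either purely combinatorial --- which vertices of the $2$-face $F=\mathsf{15^+19^+21^+}$ lie at graph distance $3$ from $\mathbf{v}^+$, the incidences placing $\mathbf{v}^-$ and its two graph-neighbours on $F$, the fact that at most one of $\mathsf{4^\pm}$ is tight at each vertex of $F$ --- or a general fact valid in every realization, such as that circuits of a face are circuits of the ambient polytope. All of this survives replacing $S_5^{48}$ by any polytope combinatorially equivalent to it with matching facet labels, which is precisely what $\widetilde{S}_5^{48}$ is. Hence, for $\widetilde{S}_5^{48}$, boundedness of the polyhedron cut out by $\mathsf{13^\pm}$ to $\mathsf{24^\pm}$ together with $\mathsf{4^\pm}$ still forces $C(F,\{\mathbf{v}^-\})$ to be unbounded, and \cref{lem:2-face} then yields a circuit walk of length at most $5$ from $\mathbf{v}^+$ to $\mathbf{v}^-$. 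To cover the alternative pair $\mathsf{3^\pm}$ permitted in \cref{enum:santos-length-realization-i}, I would apply the coordinate permutation of \cref{prop:santos-symmetry}\cref{enum:santos-symmetry-ii} to the \emph{reference} polytope $S_5^{48}$: it exchanges $\mathsf{3^\pm}\leftrightarrow\mathsf{4^\pm}$, fixes $\mathsf{19^\pm}$ and both apices, and thereby identifies $F$ with the $2$-face $F'=\mathsf{16^+19^+23^+}$ and Claim~($\ast$) with the variant using $\mathsf{3^\pm}$. Being purely combinatorial, this identification transfers to $\widetilde{S}_5^{48}$; so whichever of the two pairs in \cref{enum:santos-length-realization-i} is guaranteed to give a bounded polyhedron, the argument above, applied to $F$ or to $F'$, produces a circuit walk of length at most $5$ from $\mathbf{v}^+$ to $\mathbf{v}^-$.

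\textbf{From $\mathbf{v}^-$ to $\mathbf{v}^+$, and conclusion.} Here I would use the symmetry \cref{prop:santos-symmetry}\cref{enum:santos-symmetry-i} of $S_5^{48}$ to pin down the mirror setup: it interchanges $\mathbf{v}^+$ and $\mathbf{v}^-$, interchanges the rows $\mathsf{1^\pm}$--$\mathsf{12^\pm}$ with $\mathsf{13^\pm}$--$\mathsf{24^\pm}$, and sends $\mathsf{4^\pm}$ to $\mathsf{15^\pm}$ (and $\mathsf{3^\pm}$ to $\mathsf{16^\pm}$). Transporting the configuration $(F,\mathbf{v}^-,\text{Claim }(\ast))$ through this symmetry yields a $2$-face $F''$ containing $\mathbf{v}^+$ for which the reverse-direction analogue of the proof of \cref{thm:santos-length} is driven by boundedness of the polyhedron cut out by $\mathsf{1^\pm}$ to $\mathsf{12^\pm}$ together with $\mathsf{15^\pm}$ (or, composing once more with \cref{enum:santos-symmetry-ii}, with $\mathsf{16^\pm}$). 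Again this is a combinatorial translation carried out on the fixed polytope $S_5^{48}$, so it applies verbatim to $\widetilde{S}_5^{48}$, and \cref{enum:santos-length-realization-ii} supplies exactly the boundedness needed for a circuit walk of length at most $5$ from $\mathbf{v}^-$ to $\mathbf{v}^+$. Combining the two directions gives that the circuit length of $\widetilde{S}_5^{48}$ is at most $5$.

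\textbf{Main obstacle.} The crux is not any single computation but the bookkeeping: one must verify that every step of the proof of \cref{thm:santos-length} other than Claim~($\ast$) is genuinely combinatorial (or a realization-independent geometric fact), so that it survives replacing $S_5^{48}$ by a combinatorially equivalent realization; and one must be scrupulous that the symmetries of \cref{prop:santos-symmetry}, which are \emph{not} assumed for $\widetilde{S}_5^{48}$, are invoked only on the reference polytope $S_5^{48}$ --- their sole purpose is to identify which $2$-face and which boundedness statement take the parts of $F$ and Claim~($\ast$) in each of the two directions and for each admissible pair in \cref{enum:santos-length-realization-i} and \cref{enum:santos-length-realization-ii}. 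A secondary point to confirm is that the auxiliary $2$-faces $F'$ and $F''$ are themselves combinatorially equivalent to $F$ inside $S_5^{48}$, so that the distance-$3$ and incidence data needed by \cref{lem:2-face} hold for them as well.
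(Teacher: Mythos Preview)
Your proposal is correct and follows essentially the same approach as the paper: both arguments recognize that Claim~($\ast$) is the sole non-combinatorial ingredient in the proof of \cref{thm:santos-length}, use the symmetry of \cref{prop:santos-symmetry}\cref{enum:santos-symmetry-ii} on the reference polytope $S_5^{48}$ to produce a second $2$-face handling the alternative pair in \cref{enum:santos-length-realization-i}, and use the symmetry of \cref{prop:santos-symmetry}\cref{enum:santos-symmetry-i} to obtain the reverse-direction setup governed by \cref{enum:santos-length-realization-ii}. Two small labeling slips to fix: under \cref{enum:santos-symmetry-ii} the pair $\mathsf{19^\pm}$ is not fixed as a set (only $\mathsf{19^+}$ is fixed individually, which is all you need for $F$), and under \cref{enum:santos-symmetry-i} it is $\mathsf{3^\pm}\mapsto\mathsf{15^\pm}$ and $\mathsf{4^\pm}\mapsto\mathsf{16^\pm}$, not the other way around --- but since \cref{enum:santos-length-realization-ii} allows either pair, this does not affect the argument.
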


In particular, \cref{cor:santos-length-realization} applies to all realizations resulting from mild perturbations of $S_5^{48}$.

\begin{proof}
Under the permutation of coordinates given in \cref{prop:santos-symmetry}\cref{enum:santos-symmetry-ii} (which is an involution and leaves $S_5^{48}$ invariant) the pairs of inequalities $\mathsf{3}^\pm$ and $\mathsf{4^\pm}$, and $\mathsf{15^\pm}$ and $\mathsf{16^\pm}$ are in correspondence. This means that $S_5^{48}$ has a $2$-face $G$ that is linearly isomorphic to the $2$-face $F$ from the proof of \cref{thm:santos-length} and satisfies Claim ($\ast$) with $\mathsf{3^\pm}$ instead of $\mathsf{4^\pm}$. Since any realization of $S_5^{48}$ has two faces that are combinatorially equivalent with $F$ and $G$, respectively, we conclude from the proof of \cref{thm:santos-length} that there is a circuit walk of length at most $5$ from the apex not contained in $F$ or $G$ to the other one on any realization with property \cref{enum:santos-length-realization-i}.

For the converse direction, recall that $S^{48}_5$ is also invariant under the linear transformation given in \cref{prop:santos-symmetry}\cref{enum:santos-symmetry-i}. For the pairs of facets defined by inequalities $\mathsf{3^\pm}$ and $\mathsf{4^\pm}$, the corresponding facets in the image are defined by $\mathsf{15^\pm}$ and $\mathsf{16^\pm}$. Thus, if we assume property \cref{enum:santos-length-realization-ii}, the existence of a circuit walk of length at most $5$ in the converse direction follows directly from the proof of \cref{thm:santos-length} again. \qed
\end{proof}

For the spindle $S_5^{48}$ as realized in \cite[Theorem 3.1]{s-11}, we are even able to determine its exact circuit length.

\begin{theorem} \label{cor:santos-width}
The circuit length of $S^{48}_5$ is $2$.
\end{theorem}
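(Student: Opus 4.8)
The plan is to prove the two bounds $\geq 2$ and $\leq 2$ separately. Write $B$ for the $48\times 5$ matrix obtained by stacking $A^+$ on top of $A^-$, so that the circuits of $S_5^{48}$ are exactly the normalized directions $\mathbf{g}$ for which $B\mathbf{g}$ is support-minimal. For the lower bound, observe that $\mathbf{v}^- - \mathbf{v}^+ = -2\mathbf{e}_1$, so a circuit walk of length $1$ from $\mathbf{v}^+$ to $\mathbf{v}^-$ would force $\mathbf{e}_1$ to be a circuit. But the first column of $A^+$ is the all-ones vector and the first column of $A^-$ is its negative, so $B\mathbf{e}_1$ has full support; since, for instance, $B\mathbf{e}_2$ has a zero entry, $B\mathbf{e}_1$ is not support-minimal and $\mathbf{e}_1$ is not a circuit. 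The same argument applies to walks from $\mathbf{v}^-$ to $\mathbf{v}^+$, so the circuit length is at least $2$.

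For the upper bound I would exploit the strong degeneracy of the two apices. Since the cones $C^+$ and $C^-$ are pointed, $A^+$ and $A^-$ each have rank $5$; combined with the special first columns, this yields that all inequalities $\mathsf{1^\pm}$ to $\mathsf{12^\pm}$ are tight at $\mathbf{v}^+$, all inequalities $\mathsf{13^\pm}$ to $\mathsf{24^\pm}$ are tight at $\mathbf{v}^-$, and $A^+\mathbf{v}^- = A^-\mathbf{v}^+ = -\mathbf{1}$. In particular $\mathbf{v}^-$ is the unique solution of $A^-\mathbf{x}=\mathbf{1}$, so every $\mathbf{y}_1 \in S_5^{48}\setminus\{\mathbf{v}^-\}$ leaves at least one of $\mathsf{13^\pm}$ to $\mathsf{24^\pm}$ slack, and continuing from $\mathbf{y}_1$ past $\mathbf{v}^-$ along the direction $\mathbf{v}^- - \mathbf{y}_1$ violates that inequality. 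Hence every step of the form $\mathbf{y}_1 \to \mathbf{v}^-$ along a circuit direction is automatically maximal (the segment $[\mathbf{y}_1,\mathbf{v}^-]$ lies in $S_5^{48}$ by convexity). It therefore suffices to exhibit a vertex $\mathbf{y}_1$ adjacent to $\mathbf{v}^+$ with $\mathbf{v}^- - \mathbf{y}_1$ a circuit direction: then $\mathbf{v}^+\to\mathbf{y}_1$ is an edge step (automatically maximal, and edge directions are circuits) while $\mathbf{y}_1\to\mathbf{v}^-$ is a maximal circuit step, giving a circuit walk of length $2$ from $\mathbf{v}^+$ to $\mathbf{v}^-$. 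The reverse direction follows by applying the linear automorphism of \cref{prop:santos-symmetry}\cref{enum:santos-symmetry-i}, which swaps $\mathbf{v}^+$ and $\mathbf{v}^-$ and sends circuits to circuits.

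It remains to produce the vertex $\mathbf{y}_1$, and this is where I expect the main obstacle. Unlike Claim~($\ast$) in the proof of \cref{thm:santos-length}, whether $\mathbf{v}^- - \mathbf{y}_1$ is support-minimal for $B$ (equivalently, whether the rows of $B$ orthogonal to it span a $4$-dimensional space) is not a combinatorial property but depends on the precise coordinates of the realization in \cite[Theorem~3.1]{s-11}. I would therefore enumerate the neighbors of $\mathbf{v}^+$ for that realization and check the finitely many candidate difference vectors, exhibiting one that is a circuit. If no neighbor of $\mathbf{v}^+$ works directly, the same degeneracy argument applied at $\mathbf{v}^+$ instead shows that any point $\mathbf{y}_1$ lying on one of the $A^-$-facets---in particular any point on an edge of $S_5^{48}$ incident with $\mathbf{v}^-$---is reached from $\mathbf{v}^+$ by a maximal step as soon as $\mathbf{y}_1-\mathbf{v}^+$ is a circuit, which yields the same conclusion with the edge and non-edge steps interchanged. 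Because this verification is realization-dependent, the statement is asserted only for the realization of \cite{s-11} and, in contrast to \cref{cor:santos-length-realization}, need not survive perturbation.
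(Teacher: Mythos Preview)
Your lower-bound argument and your maximality reasoning (any step landing at $\mathbf{v}^-$, or more generally on an $A^-$-facet when starting from $\mathbf{v}^+$, is automatically maximal by the degeneracy of the apices) are correct and are exactly what the paper uses as well. The symmetry reduction via \cref{prop:santos-symmetry}\cref{enum:santos-symmetry-i} is also fine.

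The gap is in how you search for the intermediate point. You restrict $\mathbf{y}_1$ to be a \emph{vertex}: either a neighbour of $\mathbf{v}^+$ (edge step first) or a neighbour of $\mathbf{v}^-$ (edge step second). You do not verify that such a vertex exists with the required circuit property, and there is no structural reason it should. For a neighbour $\mathbf{y}_1$ of $\mathbf{v}^+$, the zeros of $B(\mathbf{v}^--\mathbf{y}_1)$ among the $A^-$-rows are exactly the $A^-$-facets through $\mathbf{y}_1$; a neighbour of $\mathbf{v}^+$ typically lies on many $A^+$-facets and very few $A^-$-facets, so producing four linearly independent orthogonal rows is far from automatic. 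The paper does \emph{not} take this route: its witness $\mathbf{y}=(0,\tfrac{1}{45},\tfrac{1}{90},\tfrac{1}{90},\tfrac{7}{360})$ is not a vertex at all but a point in the relative interior of the edge $\mathsf{11^+15^+19^+21^+}$ of the $2$-face $F$. This edge is incident with neither apex, so $\mathbf{y}$ lies outside both of your proposed search spaces. The paper then checks by hand that $360(\mathbf{y}-\mathbf{v}^+)=(-360,8,4,4,7)$ and $360(\mathbf{y}-\mathbf{v}^-)=(360,8,4,4,7)$ are each orthogonal to four linearly independent rows of $B$, hence circuits; maximality of both steps follows from your own degeneracy argument (since $\mathbf{y}$ lies on the $A^-$-facets $\mathsf{15^+},\mathsf{19^+},\mathsf{21^+}$ and on the $A^+$-facet $\mathsf{11^+}$).

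So your framework is right, but to turn it into a proof you must drop the ``one step is an edge step'' restriction and allow the intermediate point to be a non-vertex boundary point, then actually exhibit one and verify the two circuit conditions, as the paper does.
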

\begin{proof}
Consider again the face $F$ of $S_5^{48}$ defined by inequalities $\mathsf{15^+}$, $\mathsf{19^+}$, and $\mathsf{21^+}$ from the proof of \cref{thm:santos-length}. Let $\mathbf{y} = (0, \frac{1}{45}, \frac{1}{90}, \frac{1}{90}, \frac{7}{360}) = \frac{5}{8} \cdot \mathsf{7^+11^+15^+19^+21^+} + \frac{3}{8} \cdot \mathsf{4^+11^+15^+19^+21^+}$ (where we use the vertex labeling of \cref{fig:santos-face} to refer to the corresponding coordinate vectors). By this construction, $\mathbf{y}$ is a point in the strict interior of the edge of $F$ defined by $\mathsf{11^+}$. Moreover, each of the two vectors $360 (\mathbf{y}-\mathbf{v}^\pm) = (\pm 360,8,4,4,7)$ is a circuit: $(360,8,4,4,7)$ is parallel to the four facets $\mathsf{12^-}$, $\mathsf{15^+}$, $\mathsf{19^+}$, and $\mathsf{21^+}$; and $(-360,8,4,4,7)$ is parallel to $\mathsf{11^+}$, $\mathsf{15^-}$, $\mathsf{20^-}$, and $\mathsf{22^-}$. In both cases, it can be verified by a direct computation that the corresponding rows of $A^\pm$ are linearly independent.
Since the edge of $F$ defined by $\mathsf{11^+}$ neither contains $\mathbf{v}^+$ nor $\mathbf{v}^-$, the point $\mathbf{y}$ can be reached from either of $\mathbf{v}^\pm$ via a circuit step of maximal step length each. We conclude that both the sequence $\mathbf{v}^+, \mathbf{y}, \mathbf{v}^-$ and its reverse are circuit walks of length $2$.

Further, the vector $(\mathbf{v}^+ - \mathbf{v}^-)/2 = (1,0,0,0,0)$ is not contained in any facet of $S_5^{48}$
and thus cannot be a circuit. Hence, the circuit length of $S_{5}^{48}$ is $2$. \qed 
\end{proof}

Santos' original example constructed from $S_5^{48}$ is not the lowest-dimensional bounded Hirsch counterexample known to date. In \cite{msw-15}, Matschke, Santos, and Weibel gave two smaller counterexamples, both of which are constructed from 5-dimensional spindles of length 6 with $28$ and $25$ facets, respectively. The first one, $S^{28}_5$, from \cite[Corollary 2.9]{msw-15} is given by $S_5^{28} = \{ \mathbf{x} \in \R^5 \colon A^+ \mathbf{x} \le \mathbf{1}, A^- \mathbf{x} \le \mathbf{1} \}$ for the matrices
\[ \makeatletter\setlength\BA@colsep{4pt}\makeatother
  A^+ =\; \begin{blockarray}{cccccc}
  \begin{block}{(ccccc)l}
  1 & \pm 18 & 0 & 0 & 0 & \;\mathsf{ 1^\pm} \\
  1 & 0 & 0 & \pm 30 & 0 & \;\mathsf{ 2^\pm} \\
  1 & 0 & 0 & 0 & \pm 30 & \;\mathsf{ 3^\pm} \\
  1 & 0 & 5 & 0 & \pm 25 & \;\mathsf{ 4^\pm} \\
  1 & 0 & -5 & 0 & \pm 25 & \;\mathsf{ 5^\pm} \\
  1 & 0 & 0 & 18 & \pm 18 & \;\mathsf{ 6^\pm} \\
  1 & 0 & 0 & -18 & \pm 18 & \;\mathsf{ 7^\pm} \\
  \end{block}
  \end{blockarray},\;
  %
  %
  A^- =\; \begin{blockarray}{cccccc}
  \begin{block}{(ccccc)l}
  -1 & 0 & 0 & \pm 18 & 0 & \;\mathsf{ 8^\pm} \\
  -1 & 0 & \pm 30 & 0 & 0 & \;\mathsf{ 9^\pm} \\
  -1 & \pm 30 & 0 & 0 & 0 & \;\mathsf{10^\pm} \\
  -1 & 25 & 0 & 0 & \pm 5 & \;\mathsf{11^\pm} \\
  -1 & -25 & 0 & 0 & \pm 5 & \;\mathsf{12^\pm} \\
  -1 & 18 & \pm 18 & 0 & 0 & \;\mathsf{13^\pm} \\
  -1 & -18 & \pm 18 & 0 & 0 & \;\mathsf{14^\pm} \\
  \end{block}
  \end{blockarray}
\]
with $14$ rows each, labeled $\mathsf{1^\pm}$ to $\mathsf{14^\pm}$. Again, the two apices are $\mathbf{v}^+ = (1,0,0,0,0)$ and $\mathbf{v}^- = (-1,0,0,0,0)$, and we can write $S^{28}_5$ in the form $(C^+ + \mathbf{v}^+) \cap (C^- + \mathbf{v}^-)$ for the two cones $C^+ = \{ \mathbf{x} \in \R^5 \colon A^+ \mathbf{x} \le \mathbf{0} \}$ and $C^- = \{ \mathbf{x} \in \R^5 \colon A^- \mathbf{x} \le \mathbf{0} \}$.
Our arguments for bounding the circuit length of $S_5^{48}$ easily carry over to $S_5^{28}$ by analyzing the $2$-faces of $S_5^{28}$. The following result is the analogous statement to \cref{thm:santos-length,cor:santos-length-realization}. 

\begin{corollary} \label{cor:santos-23-length}
The circuit length of $S_5^{28}$ is at most $5$. The same bound holds for all realizations of $S_5^{28}$ with facets and facet-defining inequalities labeled $\mathsf{1}^\pm$ to $\mathsf{14}^\pm$ such that each facet is combinatorially equivalent to the facet of $S_5^{28}$ with the same label, 
and such that
\begin{myenumerate}
    \item the polyhedron given by inequalities $\mathsf{8^\pm}$ to $\mathsf{14^\pm}$ together with one of the pairs $\mathsf{2^\pm}$ or $\mathsf{3^\pm}$ is bounded, and \label{enum:santos-23-length-realization-i}
    \item the polyhedron given by inequalities $\mathsf{1^\pm}$ to $\mathsf{7^\pm}$ together with one of the pairs $\mathsf{9^\pm}$ or $\mathsf{10^\pm}$ is bounded. \label{enum:santos-23-length-realization-ii}
\end{myenumerate}
\end{corollary}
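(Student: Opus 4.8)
The plan is to transcribe the argument of \cref{thm:santos-length} and \cref{cor:santos-length-realization} almost verbatim, now for the spindle $S_5^{28}$. First I would record the two symmetries of $S_5^{28}$ that play the role of \cref{prop:santos-symmetry}: a linear involution swapping the apices $\mathbf{v}^+ \leftrightarrow \mathbf{v}^-$ and the blocks $A^+ \leftrightarrow A^-$, and a coordinate symmetry fixing the apices that (after the appropriate relabeling of rows) interchanges the inequality pairs $\mathsf{2}^\pm \leftrightarrow \mathsf{3}^\pm$ and $\mathsf{9}^\pm \leftrightarrow \mathsf{10}^\pm$. Both are direct consequences of the coefficient patterns of $A^\pm$, exactly as for $S_5^{48}$; the first reduces the problem to producing a circuit walk of length at most $5$ in a single direction, say from $\mathbf{v}^+$ to $\mathbf{v}^-$.

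Next I would use graph computations (Polymake) to locate a $2$-face $F$ of $S_5^{28}$ that contains the apex $\mathbf{v}^-$, is defined by three inequalities from the $C^-$-block, and has the property that every vertex of $F$ lies within $3$ edge steps of $\mathbf{v}^+$ in the graph of $S_5^{28}$ --- the analogue of the face $\mathsf{15^+19^+21^+}$ depicted in \cref{fig:santos-face}. The heart of the argument is then the claim that the polyhedron $C(F,\{\mathbf{v}^-\})$ of \cref{lem:2-face} is unbounded. By the same reduction as in the proof of \cref{thm:santos-length}, this follows from boundedness of the auxiliary polyhedron $Q$ cut out by all inequalities $\mathsf{8}^\pm$ through $\mathsf{14}^\pm$ (defining $C^-$) together with one of the pairs $\mathsf{2}^\pm$ or $\mathsf{3}^\pm$ (from $C^+$) --- which is precisely hypothesis \cref{enum:santos-23-length-realization-i} for a general realization, and which for the realization of \cite{msw-15} is verified as in Claim~($\ast$): each of the pairs $\mathsf{2}^\pm$, $\mathsf{3}^\pm$ of rows of $A^+$ sums to $(2,0,0,0,0)$, so the recession cone of $Q$ lies in $\{x_1 \le 0\}$, while the sum of all fourteen rows of $A^-$ equals $(-14,0,0,0,0)$, forcing every nonzero vector of $C^-$ to satisfy $x_1 > 0$; hence the recession cone of $Q$ is trivial. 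Given boundedness of $Q$, the combinatorial part of the proof of \cref{thm:santos-length} (the two edges of the corresponding $2$-face $F_Q$ defined by the $C^+$-pair are distinct and meet in a new vertex $\mathbf{w}$ whose feasible cone is the recession cone of $C(F,\{\mathbf{v}^-\})$) carries over unchanged, and \cref{lem:2-face} yields a circuit walk of length at most $2$ from any vertex of $F$ to $\mathbf{v}^-$ inside $F$; appending a path of at most $3$ edges from $\mathbf{v}^+$ into $F$ and using that circuits of $F$ are circuits of $S_5^{28}$ gives a circuit walk of length at most $5$ from $\mathbf{v}^+$ to $\mathbf{v}^-$.

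For the general realization statement I would mirror \cref{cor:santos-length-realization}: the coordinate symmetry produces a second $2$-face $G$, linearly isomorphic to $F$, for which the Claim~($\ast$)-analogue holds with $\mathsf{3}^\pm$ in place of $\mathsf{2}^\pm$, so hypothesis \cref{enum:santos-23-length-realization-i} (in either of its two forms) secures one direction; the apex-swapping symmetry converts the boundedness condition on $Q$ into the statement about inequalities $\mathsf{1}^\pm$ to $\mathsf{7}^\pm$ together with $\mathsf{9}^\pm$ or $\mathsf{10}^\pm$, i.e.\ hypothesis \cref{enum:santos-23-length-realization-ii}, which secures the converse. Since $F$, the adjacency structure, and the distance-$3$ bound are purely combinatorial data preserved under sufficiently small perturbations, and the only metric input is the Claim~($\ast$)-type boundedness, the bound of $5$ extends to mild perturbations.

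I expect the only genuine work to be computer-assisted: finding the right $2$-face $F$ through $\mathbf{v}^-$ whose vertices are all within distance $3$ of $\mathbf{v}^+$, and checking --- as in the $S_5^{48}$ case --- that removing from $F$ the inequalities tight at $\mathbf{v}^-$ cuts off exactly one new vertex of $F_Q$, so that the recession cone of $C(F,\{\mathbf{v}^-\})$ is two-dimensional. Everything else (the two row sums, the polar-cone argument, the two symmetry reductions, and the final application of \cref{lem:2-face}) is a direct transcription of the $S_5^{48}$ proof. The main obstacle is therefore not conceptual but bookkeeping: producing the correct analogue of \cref{fig:santos-face} for $S_5^{28}$ and confirming that its combinatorics match what the argument needs.
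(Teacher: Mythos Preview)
Your overall plan mirrors the paper's, but there is a genuine gap: the two symmetries you invoke for $S_5^{28}$ do \emph{not} exist. The paper explicitly points this out. An apex-swapping linear symmetry would have to carry rows $\mathsf{4}^\pm,\mathsf{5}^\pm$ of $A^+$ (entries $5$ and $25$ in coordinates $3$ and $5$) to rows $\mathsf{11}^\pm,\mathsf{12}^\pm$ of $A^-$ (entries $25$ and $5$ in coordinates $2$ and $5$), and no signed coordinate permutation accomplishes this. Likewise, the coordinate involution you propose (swapping $x_4\leftrightarrow x_5$ and $x_2\leftrightarrow x_3$) sends row $\mathsf{4}^+=(1,0,5,0,25)$ to $(1,5,0,25,0)$, which is not a row of $A^+$. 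So you cannot reduce to a single direction, nor transport a single $2$-face $F$ to the other three cases by symmetry.

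The paper's remedy is to drop the symmetry shortcut and do the computer search in all four cases separately: it exhibits four distinct $2$-faces (two containing $\mathbf{v}^-$, two containing $\mathbf{v}^+$), each with a vertex at edge-distance $3$ from the opposite apex, and for each the relevant pair $\mathsf{2}^\pm,\mathsf{3}^\pm,\mathsf{9}^\pm,\mathsf{10}^\pm$ plays the role of the Claim~($\ast$) pair. Your row-sum verification of boundedness is correct and in fact handles all four pairs at once (each pair sums to $(\pm 2,0,0,0,0)$, and the rows of $A^\pm$ sum to $(\pm 14,0,0,0,0)$), so that part needs no change. A minor correction: you do not need every vertex of $F$ to lie within $3$ edge steps of the far apex (indeed $\mathbf{v}^-\in F$ is at distance $6$); you only need one such vertex, since \cref{lem:2-face} then supplies the remaining two circuit steps to $\mathbf{v}^-$.
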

\begin{proof}
The proof strategy is identical to the proofs of \cref{thm:santos-length,cor:santos-length-realization}. We only give the necessary modifications here.

Each apex of $S_5^{28}$ is contained in a $2$-face with a vertex at distance $3$ from the other apex in the graph of $S_5^{28}$ (see the graphs in \cref{fig:santos-23-face} in the Appendix). The pairs of facet-defining inequalities $\mathsf{2^\pm}$, $\mathsf{3^\pm}$, $\mathsf{9^\pm}$, and $\mathsf{10^\pm}$ now take the role that $\mathsf{3^\pm}$, $\mathsf{4^\pm}$, $\mathsf{15^\pm}$, and $\mathsf{16^\pm}$ took for $S_5^{48}$ (cf.~ \cref{cor:santos-length-realization}). Hence, properties \cref{enum:santos-23-length-realization-i,enum:santos-23-length-realization-ii} are the analogues of Claim ($\ast$) from the proof of \cref{thm:santos-length} (and properties \cref{enum:santos-length-realization-i} and \cref{enum:santos-length-realization-ii} in \cref{cor:santos-length-realization}) and are therefore sufficient conditions for the existence of circuit walks of length at most $5$ between the apices of any realization of $S_5^{28}$.

It remains to show that $S^{28}_5$ itself satisfies properties \cref{enum:santos-23-length-realization-i,enum:santos-23-length-realization-ii}. Note that $S_5^{28}$ is not symmetric under the transformations in \cref{prop:santos-symmetry}: to switch the roles of the apices while leaving $S^{28}_5$ invariant, rows $\mathsf{4^\pm}$ and $\mathsf{5^\pm}$ would have to correspond with $\mathsf{11^\pm}$ and $\mathsf{12^\pm}$, which is impossible to achieve by permutating coordinates and flipping signs. Similarly, no permutation of coordinates (except for the identity) preserves $A^\pm$. 
This means that we cannot use the same argument as in the proof of \cref{cor:santos-length-realization} to reduce all four pairs of inequalities $\mathsf{2^\pm}$, $\mathsf{3^\pm}$, $\mathsf{9^\pm}$, and $\mathsf{10^\pm}$ to just one.
However, summing over all rows of $A^+$ yields $(14,0,0,0,0)$, which implies that this vector is in the strict interior of the polar cone of $C^+$. Similarly, the sum of all rows of $A^-$, which is the vector $(-14,0,0,0,0)$, is in the strict interior of the polar cone of $C^-$. 
For each of the pairs $\mathsf{2^\pm}$, $\mathsf{3^\pm}$, $\mathsf{9^\pm}$, and $\mathsf{10^\pm}$, the two corresponding rows of $A^+$ or $A^-$ add up to $(\pm 2,0,0,0,0)$, respectively. So by the same argument as in the proof of Claim ($\ast$) in the proof of \cref{thm:santos-length}, it then follows that, in fact, all four polyhedra described in \cref{enum:santos-23-length-realization-i,enum:santos-23-length-realization-ii} are bounded. \qed 
\end{proof}

As for $S_5^{48}$, we are able to establish a circuit length of exactly $2$ for the particular realization of $S_5^{28}$ in \cite[Corollary 2.9]{msw-15}.

\begin{theorem} \label{cor:santos-23-width}
The circuit length of $S^{28}_5$ is $2$.
\end{theorem}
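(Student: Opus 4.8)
The plan is to mirror the proof of \cref{cor:santos-width} for $S_5^{48}$. The lower bound comes for free: $S_5^{28}$ has the same apices $\mathbf{v}^+ = (1,0,0,0,0)$ and $\mathbf{v}^- = (-1,0,0,0,0)$, and the first columns of $A^+$ and $A^-$ are $\mathbf{1}$ and $-\mathbf{1}$ respectively, so $A_j^\pm \cdot (1,0,0,0,0) = \pm 1 \neq 0$ for every row. Hence the direction $(\mathbf{v}^+ - \mathbf{v}^-)/2 = (1,0,0,0,0)$ is parallel to no facet of $S_5^{28}$ and therefore is not a circuit, which forces the circuit length to be at least $2$. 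It remains to produce a circuit walk of length $2$ from one apex to the other.

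For the upper bound, I would exhibit a single point $\mathbf{y}$ reachable from \emph{both} apices by one maximal circuit step, taking $\mathbf{y}$ in the relative interior of a carefully chosen edge of one of the $2$-faces of $S_5^{28}$ already analyzed in the proof of \cref{cor:santos-23-length} and depicted in \cref{fig:santos-23-face}. Concretely, let $F$ be the $2$-face of $S_5^{28}$ containing $\mathbf{v}^-$ that is cut out by three rows of $A^-$, and let $\mathbf{y}$ be an interior point of the edge of $F$ defined by a suitable row $\mathsf{r}^+$ of $A^+$ --- the analogue of the edge defined by $\mathsf{11^+}$ in the proof of \cref{cor:santos-width} --- written explicitly as a convex combination of the two endpoints of that edge. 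A natural ansatz, mimicking the $S_5^{48}$ case, is that $\mathbf{y}$ lies on the equator $\{x_1 = 0\}$ with $\mathbf{y} - \mathbf{v}^\pm$ proportional to a vector of the shape $(\pm t, \ast, \ast, \ast, \ast)$, so the two difference directions are found by solving a small linear system.

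One then checks by direct computation that (a) $\mathbf{y} \in S_5^{28}$ lies in the relative interior of that edge, hence is contained in exactly the four facets $\mathsf{r}^+$ and the three rows defining $F$, and in particular on neither apex; (b) the scaled vector $\mathbf{y} - \mathbf{v}^+$ is parallel to $\mathsf{r}^+$ together with three further facet hyperplanes whose rows of $A^\pm$ are linearly independent, so $\mathbf{y} - \mathbf{v}^+$ is a circuit; and (c) analogously $\mathbf{y} - \mathbf{v}^-$ is parallel to the three rows of $A^-$ defining $F$ together with one more, these four rows being linearly independent, so $\mathbf{y} - \mathbf{v}^-$ is a circuit. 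Since $\mathbf{v}^+$ avoids the three $A^-$-facets defining $F$ while $\mathbf{v}^-$ avoids $\mathsf{r}^+$, moving from $\mathbf{v}^+$ along $\mathbf{y} - \mathbf{v}^+$ (respectively from $\mathbf{v}^-$ along $\mathbf{y} - \mathbf{v}^-$) first leaves $S_5^{28}$ exactly at $\mathbf{y}$, so each of these two steps has maximal step length. Therefore both $\mathbf{v}^+, \mathbf{y}, \mathbf{v}^-$ and its reverse are circuit walks of length $2$, which together with the lower bound shows the circuit length of $S_5^{28}$ is exactly $2$.

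The hard part is locating the witness $\mathbf{y}$: unlike $S_5^{48}$, the spindle $S_5^{28}$ admits no linear symmetry swapping its apices (as noted in the proof of \cref{cor:santos-23-length}), so the two differences $\mathbf{y} - \mathbf{v}^+$ and $\mathbf{y} - \mathbf{v}^-$ cannot be deduced from one another and must be arranged to be circuits simultaneously; this is a finite but slightly delicate search among the $2$-faces and their edges. Once a candidate $\mathbf{y}$ is in hand, the remaining verifications --- membership in $S_5^{28}$, that the supporting edge misses both apices, and full rank of the relevant $4 \times 5$ submatrices of $A^\pm$ --- are routine.
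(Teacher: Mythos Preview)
Your plan is correct in outline and matches the paper's strategy: the lower bound via ``$(1,0,0,0,0)$ is parallel to no facet, hence not a circuit'' is identical, and the upper bound is obtained by exhibiting a single intermediate point $\mathbf{y}$ on the equator $\{x_1=0\}$ reached from each apex by one maximal circuit step. Where your proposal diverges from the paper is in the location and structure of $\mathbf{y}$.

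The paper does not take $\mathbf{y}$ in the relative interior of an edge of a $2$-face; instead it takes $\mathbf{y}$ to be a \emph{vertex} of $S_5^{28}$, namely $\mathbf{y} = (0,\tfrac{1}{30},\tfrac{2}{90},\tfrac{2}{90},\tfrac{1}{30})$, cut out by the five inequalities $\mathsf{3^+},\mathsf{6^+},\mathsf{10^+},\mathsf{11^+},\mathsf{13^+}$ (one of the highlighted vertices of the face $\mathsf{10^+11^+13^+}$ in \cref{fig:santos-23-face-a}). The two difference directions $90(\mathbf{y}-\mathbf{v}^\pm) = (\pm 90,3,2,2,3)$ are then circuits, but their supporting hyperplanes do not fit your template: for instance $\mathbf{y}-\mathbf{v}^-$ is parallel to $\mathsf{3^-},\mathsf{7^-},\mathsf{10^+},\mathsf{13^+}$, i.e.\ to only \emph{two} of the three $A^-$ rows defining the face, together with two $A^+$ rows, rather than ``the three rows of $A^-$ defining $F$ plus one more'' as you posit. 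Choosing a vertex also makes maximality of each step automatic, since $\mathbf{y}$ lies on $A^-$ facets not containing $\mathbf{v}^+$ and on $A^+$ facets not containing $\mathbf{v}^-$; there is no convex combination to tune.

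So the gap in your proposal is not conceptual but concrete: you have outlined a search without producing a witness, and the structural ansatz you impose on the two circuits is more restrictive than what is needed and does not describe the paper's actual $\mathbf{y}$. The paper's shortcut---pick a vertex on the equator and check that both $(\pm 90,3,2,2,3)$ happen to be circuits via a $2{+}2$ split of $A^+$/$A^-$ parallel facets---is what turns the ``slightly delicate search'' into a one-line computation.
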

\begin{proof}
A direct computation shows that the five inequalities $\mathsf{3^+}$, $\mathsf{6^+}$, $\mathsf{10^+}$, $\mathsf{11^+}$, and $\mathsf{13^+}$ define a vertex $\mathbf{y} = (0, \frac{1}{30}, \frac{2}{90}, \frac{2}{90}, \frac{1}{30})$ of $S_5^{28}$ (one of the highlighted vertices in \cref{fig:santos-23-face-a}). By a direct computation, one can verify that both difference vectors $90 (\mathbf{y} - \mathbf{v}^\pm) = (\pm 90, 3,2,2,3)$ are circuits: $(90, 3,2,2,3)$ is parallel to facets $\mathsf{3^-}$, $\mathsf{7^-}$,  $\mathsf{10^+}$, and $\mathsf{13^+}$; 
and $(-90, 3,2,2,3)$ is parallel to $\mathsf{3^+}$, $\mathsf{6^+}$, $\mathsf{10^-}$, and $\mathsf{14^-}$. 
Hence, $\mathbf{v}^+, \mathbf{y}, \mathbf{v}^-$ is a reversible circuit walk of length $2$.
Since the vector $(\mathbf{v}^+ - \mathbf{v}^-)/2 = (1,0,0,0,0)$ is not contained in any facet of $S_5^{28}$, 
it cannot be a circuit of $S_5^{28}$. It follows that the circuit length of $S_5^{28}$ is $2$. \qed
\end{proof}

Our framework for bounding the circuit lengths of $S^{48}_5$ and $S^{28}_5$ also applies to the remaining spindle from \cite[Theorem 2.14]{msw-15}, which has 25 facets and is given by $S_5^{25} = \{ \mathbf{x} \in \R^5 \colon A^+ \mathbf{x} \le \mathbf{1}, A^- \mathbf{x} \le \mathbf{1} \}$ where
\begingroup
\renewcommand*{\arraystretch}{1.1}
\[ \makeatletter\setlength\BA@colsep{4pt}\makeatother
  A^+ =\; \begin{blockarray}{cccccc}
  \begin{block}{(ccccc)l}
  1 & 0 & 0 & 0 & 32 & \;\mathsf{ 1} \\
  1 & 0 & 0 & 0 & -32 & \;\mathsf{ 2} \\
  1 & 0 & 0 & 21 & -7 & \;\mathsf{ 3} \\
  1 & 0 & 0 & -21 & -7 & \;\mathsf{ 4} \\
  1 & 0 & 0 & 20 & -4 & \;\mathsf{ 5} \\
  1 & 0 & 0 & -20 & -4 & \;\mathsf{ 6} \\
  1 & 0 & 0 & 16 & -15 & \;\mathsf{ 7} \\
  1 & 0 & 0 & -16 & -15 & \;\mathsf{ 8} \\
  1 & \frac{3}{50} & -\frac{1}{25} & 0 & -30 & \;\mathsf{ 9} \\
  1 & -\frac{3}{50} & -\frac{1}{25} & 0 & 30 & \;\mathsf{10} \\
  1 & \frac{3}{1000} & \frac{7}{1000} & 0 & -\frac{159}{5} & \;\mathsf{11} \\
  1 & -\frac{3}{1000} & \frac{7}{1000} & 0 & \frac{159}{5} & \;\mathsf{12} \\
  \end{block}
  \end{blockarray},\;
  %
  %
  A^- =\; \begin{blockarray}{cccccc}
  \begin{block}{(ccccc)l}
  -1 & 60 & 0 & 0 & 0 & \;\mathsf{13} \\
  -1 & -55 & 0 & 0 & 0 & \;\mathsf{14} \\
  -1 & 0 & 76 & 0 & 0 & \;\mathsf{15} \\
  -1 & 0 & -33 & 0 & 0 & \;\mathsf{16} \\
  -1 & 44 & 34 & 0 & 0 & \;\mathsf{17} \\
  -1 & 8 & -30 & 0 & 0 & \;\mathsf{18} \\
  -1 & -34 & 36 & 0 & 0 & \;\mathsf{19} \\
  -1 & -2 & -32 & 0 & 0 & \;\mathsf{20} \\
  -1 & -20 & 0 & \frac{1}{5} & -\frac{1}{5} & \;\mathsf{21} \\
  -1 & \frac{2999}{50} & 0 & -\frac{3}{25} & -\frac{1}{5} & \;\mathsf{22} \\
  -1 & \frac{299999}{5000} & 0 & 0 & \frac{1}{100} & \;\mathsf{23} \\
  -1 & -\frac{549}{10} & 0 & \frac{1}{5000} & \frac{1}{800} & \;\mathsf{24} \\
  -1 & -54 & 0 & \frac{1}{500} & -\frac{1}{80} & \;\mathsf{25} \\
  \end{block}
  \end{blockarray}.
\]
\endgroup

The two apices of $S^{25}_5$ again are $\mathbf{v}^+ = (1,0,0,0,0)$ and $\mathbf{v}^- = (-1,0,0,0,0)$.
For the two cones $C^+ = \{ \mathbf{x} \in \R^5 \colon A^+ \mathbf{x} \le \mathbf{0} \}$ and $C^- = \{ \mathbf{x} \in \R^5 \colon A^- \mathbf{x} \le \mathbf{0} \}$, we can write $S^{25}_5$ as $S^{25}_5 = (C^+ + \mathbf{v}^+) \cap (C^- + \mathbf{v}^-)$.

\begin{corollary} \label{cor:weibel-length} 
The circuit length of $S_5^{25}$ is at most $5$. The same bound holds for all realizations of $S_5^{25}$ with facets and facet-defining inequalities labeled $\mathsf{1}$ to $\mathsf{25}$ such that each facet is combinatorially equivalent to the facet of $S_5^{25}$ with the same label, and such that
\begin{myenumerate}
    \item the polyhedron given by inequalities $\mathsf{13}$ to $\mathsf{25}$ together with one of the pairs $\mathsf{1}, \mathsf{2}$ or $\mathsf{3}, \mathsf{4}$ is bounded, and \label{enum:weibel-length-realization-i}
    \item the polyhedron given by inequalities $\mathsf{1}$ to $\mathsf{12}$ together with one of the pairs $\mathsf{13}, \mathsf{14}$ or $\mathsf{15}, \mathsf{16}$ is bounded. \label{enum:weibel-length-realization-ii}
\end{myenumerate}
\end{corollary}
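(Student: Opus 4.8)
The plan is to run the argument from the proofs of \cref{thm:santos-length}, \cref{cor:santos-length-realization}, and \cref{cor:santos-23-length} essentially verbatim. Since $S_5^{25}$ admits no linear symmetry exchanging its two apices — exactly as for $S_5^{28}$ in \cref{cor:santos-23-length} — I would treat the two directions separately, producing one $2$-face for each apex. Using exact face and vertex enumeration (e.g.\ Polymake), I would first locate a $2$-face $F^{-}$ of $S_5^{25}$ containing the apex $\mathbf{v}^{-}$ that has a vertex at combinatorial distance $\le 3$ from $\mathbf{v}^{+}$ in the graph of $S_5^{25}$, together with a shortest edge path realizing that distance; symmetrically, a $2$-face $F^{+}$ containing $\mathbf{v}^{+}$ with a vertex at distance $\le 3$ from $\mathbf{v}^{-}$. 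Drawing the graphs of $F^{\pm}$ (in the style of \cref{fig:santos-face}) would make the decisive combinatorial point checkable: the facets of $S_5^{25}$ appearing on the edges of $F^{-}$ that avoid $\mathbf{v}^{-}$ must lie among $\mathsf{1},\mathsf{2},\mathsf{3},\mathsf{4}$ — the facets of the feasible cone $C^{+}$ at the opposite apex — and those on the edges of $F^{+}$ that avoid $\mathbf{v}^{+}$ must lie among $\mathsf{13},\mathsf{14},\mathsf{15},\mathsf{16}$, the facets of $C^{-}$. These quadruples take over the roles played by $\mathsf{3^\pm},\mathsf{4^\pm}$ and $\mathsf{15^\pm},\mathsf{16^\pm}$ for $S_5^{48}$.

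Granting this structure, the remainder of the proof of \cref{thm:santos-length} transfers with no change. To show $C(F^{-},\{\mathbf{v}^{-}\})$ is unbounded — hence, by \cref{lem:2-face}, that the distance-$3$ vertex of $F^{-}$ reaches $\mathbf{v}^{-}$ in two more circuit steps, for a total of $5$ steps from $\mathbf{v}^{+}$ (circuits of a face being circuits of the spindle) — it suffices that the polyhedron defined by inequalities $\mathsf{13}$ to $\mathsf{25}$ together with one of the pairs $\mathsf{1},\mathsf{2}$ or $\mathsf{3},\mathsf{4}$ is bounded; this is precisely hypothesis \cref{enum:weibel-length-realization-i}, which here plays the role of Claim ($\ast$). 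Likewise, unboundedness of $C(F^{+},\{\mathbf{v}^{+}\})$ reduces to hypothesis \cref{enum:weibel-length-realization-ii}. Since the passage to the superpolyhedron, the identification of its four vertices, and the resulting description of the recession cone use nothing about the realization beyond these two boundedness statements, the bound $5$ holds for every realization of $S_5^{25}$ whose facets match the combinatorial type of $S_5^{25}$ facet-by-facet and that satisfies \cref{enum:weibel-length-realization-i,enum:weibel-length-realization-ii}. The combinatorial type of $F^{\pm}$ and of the chosen shortest paths is stable under small perturbations, and conditions \cref{enum:weibel-length-realization-i,enum:weibel-length-realization-ii} are open, so mild perturbations of $S_5^{25}$ are also covered — giving the final sentence of the corollary.

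It remains to verify that $S_5^{25}$ itself satisfies \cref{enum:weibel-length-realization-i,enum:weibel-length-realization-ii}. For \cref{enum:weibel-length-realization-i} I would use the pair $\mathsf{1},\mathsf{2}$: rows $\mathsf{1}$ and $\mathsf{2}$ of $A^{+}$ are $(1,0,0,0,\pm 32)$ and sum to $(2,0,0,0,0)$, so the recession cone of the polyhedron defined by $\mathsf{13},\dots,\mathsf{25},\mathsf{1},\mathsf{2}$ is contained in $C^{-}\cap\{\mathbf{x}\colon x_1\le 0\}$; it then suffices to see that $C^{-}\cap\{\mathbf{x}\colon x_1\le 0\}=\{\mathbf{0}\}$, i.e.\ that $x_1>0$ for every nonzero $\mathbf{x}\in C^{-}$. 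Unlike for $S_5^{28}$, this does not drop out of a row-sum identity — neither the row sum of $A^{+}$ nor that of $A^{-}$ is of the form $(\pm c,0,0,0,0)$ — but it follows from a short chain of eliminations along the rows of $A^{-}$: $x_1\le 0$ forces $x_1=x_2=0$ via $\mathsf{13},\mathsf{14}$; then $x_3=0$ via $\mathsf{15},\mathsf{16}$; then $x_4=x_5=0$ via $\mathsf{21},\mathsf{22},\mathsf{23}$. Symmetrically, for \cref{enum:weibel-length-realization-ii} I would use $\mathsf{13},\mathsf{14}$: these force $x_1\ge 60x_2$ and $x_1\ge -55x_2$, hence $x_1\ge 0$ on the recession cone, and one checks in the same elementary fashion (using rows $\mathsf{1},\mathsf{2},\mathsf{3},\mathsf{4},\mathsf{9},\mathsf{10},\mathsf{11},\mathsf{12}$ of $A^{+}$) that $\mathbf{x}\in C^{+}$ with $x_1\ge 0$ forces $\mathbf{x}=\mathbf{0}$.

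The main obstacle is not a single hard step but the loss of the two features that made $S_5^{28}$ convenient: $S_5^{25}$ has neither the apex-exchanging symmetry of $S_5^{48}$ nor the tidy $(\pm 14,0,0,0,0)$ row sums of $S_5^{28}$, so both the identification of the two $2$-faces and the triviality of the two recession cones must be established by explicit exact computation rather than read off a structural coincidence. I would expect the combinatorial step — confirming that $F^{\pm}$ really have the predicted defining facets and the predicted distance-$3$ vertices — to be the least automatic part, and I would cross-check it against the graph of $S_5^{25}$ produced by Polymake before trusting the reduction to \cref{enum:weibel-length-realization-i,enum:weibel-length-realization-ii}.
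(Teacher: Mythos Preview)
Your proposal is correct and follows essentially the same route as the paper: locate suitable $2$-faces (one per apex), invoke \cref{lem:2-face}, and reduce unboundedness of $C(F^\pm,\{\mathbf{v}^\pm\})$ to the boundedness hypotheses \cref{enum:weibel-length-realization-i,enum:weibel-length-realization-ii}; the paper records the relevant $2$-faces explicitly in \cref{fig:weibel-face}. One phrasing is imprecise: it is not true that \emph{all} edges of $F^-$ avoiding $\mathbf{v}^-$ are defined by facets among $\mathsf{1},\mathsf{2},\mathsf{3},\mathsf{4}$ (in the paper's face $\mathsf{15\,17\,21}$, edges defined by $\mathsf{5}$ and $\mathsf{7}$ also appear); what matters, and what the paper uses, is that the two edges of $F^-$ incident with the neighbours of $\mathbf{v}^-$ are defined by one of the pairs $\{\mathsf{1},\mathsf{2}\}$ or $\{\mathsf{3},\mathsf{4}\}$.

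The only substantive difference is how you verify \cref{enum:weibel-length-realization-i,enum:weibel-length-realization-ii} for $S_5^{25}$ itself. The paper exhibits explicit positive linear combinations of rows of $A^\pm$ (with carefully chosen coefficients such as $\tfrac{21}{8}$, $\tfrac{40}{7}$, $380$, $22$) that place $(\pm 1,0,0,0,0)$ in the strict interior of the polar cones of $C^\mp$, i.e.\ Farkas certificates. You instead argue by direct elimination that $C^-\cap\{x_1\le 0\}=\{\mathbf{0}\}$ and $C^+\cap\{x_1\ge 0\}=\{\mathbf{0}\}$. The two arguments are LP-dual to one another; yours avoids guessing coefficients and is arguably more transparent, while the paper's certificates are self-verifying once written down.
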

\begin{proof}
Examples of the relevant $2$-faces and their graphs are given in \cref{fig:weibel-face} in the Appendix. Note that for each such face, the two critical facet-defining inequalities are one of the pairs $\mathsf{1}, \mathsf{2}$, or $\mathsf{3}, \mathsf{4}$, or $\mathsf{13}, \mathsf{14}$, or $\mathsf{15}, \mathsf{16}$ in \cref{enum:weibel-length-realization-i,enum:weibel-length-realization-ii}. The proof of the second part of the statement is therefore analogous to the proofs for $S_5^{48}$ and $S_5^{28}$ above (\cref{thm:santos-length,cor:santos-length-realization,cor:santos-23-length}).

It remains to prove that $S_5^{25}$ satisfies \cref{enum:weibel-length-realization-i,enum:weibel-length-realization-ii}. We again follow the proof strategy for Claim ($\ast$) in the proof of \cref{thm:santos-length}. We show that for each of the pairs $\mathsf{1}, \mathsf{2}$ and $\mathsf{13}, \mathsf{14}$, there is a non-negative linear combination of the two corresponding rows of $A^+$ or $A^-$ whose negative is in the strict interior of the polar cone of $C^-$ or $C^+$, respectively. It then follows that the respective polyhedra in \cref{enum:weibel-length-realization-i,enum:weibel-length-realization-ii} are bounded, 
since their recession cones only consist of $\mathbf{0}$.

To see this, first observe that we can write the vector $(-1,0,0,0,0)$ as a non-negative linear combination of rows $\mathsf{13}$ and $\mathsf{14}$ of $A^-$. (In fact, the same holds true for rows $\mathsf{15}$ and $\mathsf{16}$.) Further, consider a linear combination of the rows of $A^+$ that assigns coefficient $\frac{21}{8}$ for row $\mathsf{1}$, $\frac{40}{7}$ for rows $\mathsf{11}$ and $\mathsf{12}$, and coefficient $1$ for all other rows. 
This linear combination yields a positive multiple of $(1,0,0,0,0)$. Since all coefficients are positive, we conclude that $(1,0,0,0,0)$ is in the strict interior of $C^+$.

For rows $\mathsf{1}$ and $\mathsf{2}$ of $A^+$, we proceed analogously. Their sum equals $(2,0,0,0,0)$. To show that $(-2,0,0,0,0)$ is in the strict interior of the polar cone of $C^-$, consider the linear combination of rows $\mathsf{22}$, $\mathsf{24}$, and $\mathsf{25}$ of $A^-$ with coefficients $1$, $380$, and $22$, respectively. The resulting vector is $(-403,-\beta,0,0,0)$ where $\beta = 549 \cdot 38 + 54 \cdot 22 - \frac{2999}{50} > 0$. Further, the linear combination of rows $\mathsf{16}$ and $\mathsf{17}$ of $A^-$ with coefficients $\frac{34}{33 \cdot 44} \beta$ and $\frac{1}{44} \beta$, respectively, yields the vector $(-\frac{67}{33 \cdot 44} \beta, \beta, 0,0,0)$. Adding both vectors, we thus obtain a positive multiple of $(-1,0,0,0,0)$ from a linear combination of rows $\mathsf{16}$, $\mathsf{17}$, $\mathsf{22}$, $\mathsf{24}$, and $\mathsf{25}$ of $A^-$ where all coefficients are positive since $\beta>0$. This means that the vector $(-1,0,0,0,0)$ is in the strict interior of the cone generated by rows $\mathsf{16}$, $\mathsf{17}$, $\mathsf{22}$, $\mathsf{24}$, and $\mathsf{25}$ of $A^-$. Note that this cone is contained in the polar cone of $C^-$, which is generated by all rows of $A^-$, and it is full-dimensional since the five rows $\mathsf{16}$, $\mathsf{17}$, $\mathsf{22}$, $\mathsf{24}$, and $\mathsf{25}$ are linearly independent. Hence, $(-1,0,0,0,0)$ must also be in the strict interior of the polar cone of $C^-$. \qed
\end{proof}

In contrast to the statements of \cref{cor:santos-width,cor:santos-23-width} for the other two spindles, the circuit length of $S_5^{25}$ as given in \cite[Theorem 2.14]{msw-15} is at least $3$. This can be verified computationally by a brute-force enumeration of all points $\mathbf{y}_1$ on the boundary of $S_5^{25}$ that can be reached from $\mathbf{v}^-$ via a single circuit step ($S_5^{25}$ has 17454 circuits). For no such point $\mathbf{y}_1$, the vector $\mathbf{v}^+-\mathbf{y}_1$ is a circuit direction. 

We conclude this section with some remarks on our proofs. For all three spindles $S_5^{48}$, $S_5^{28}$, and $S_5^{25}$, the faces given in \cref{fig:santos-face,fig:santos-23-face,fig:weibel-face} are not the only $2$-faces that satisfy the prerequisite of \cref{lem:2-face}. In fact, we enumerated all $2$-faces using Polymake \cite{polymake} and found that for each of the three spindles there are $32$ such $2$-faces that contain one of the apices ($16$ for each apex). Each of them is combinatorially equivalent to one of the examples given in \cref{fig:santos-face,fig:santos-23-face,fig:weibel-face}. Moreover, for any such $2$-face, the two facet-defining inequalities that are relevant for verifying the boundedness condition in \cref{lem:2-face} are one of the pairs given in \cref{cor:santos-length-realization,cor:santos-23-length,cor:weibel-length}.

Finally, as explained above, our bounds on the circuit length of $S_5^{48}$, $S_5^{28}$, and $S_5^{25}$ are robust under mild perturbations as long as they retain the properties in \cref{cor:santos-length-realization,cor:santos-23-length,cor:weibel-length}, respectively. However, we do not know whether, in fact, \emph{all} realizations of the three spindles satisfy these properties. We leave this as an open question.

\subsection{An Analytical and Computational Verification for the $20$- and $23$-Dimensional Hirsch Counterexamples} \label{sec:Santos-highdim}

Santos' original disproof of the bounded Hirsch conjecture in \cite{s-11} crucially relies on finding a degenerate spindle whose (combinatorial) length is greater than its dimension. We have shown that in the circuit setting, neither Santos' original spindle $S_5^{48}$ nor any of the subsequent improvements $S_5^{28}$ and $S_5^{25}$ from \cite{msw-15} meet this requirement: all three spindles (and slight perturbations thereof) have circuit length at most $5$. Therefore, neither of them leads to a counterexample to the circuit diameter conjecture via Santos' construction from \cite{s-11}. 
For the two smaller of the three spindles, the steps of this construction have been explicitly carried out by Matschke, Santos, and Weibel \cite{msw-15}, resulting in inequality descriptions of two explicit Hirsch counterexamples. Our arguments developed in \cref{sec:Santos-5} even allow us to verify that the circuit length of these two explicitly given spindles is indeed at most their dimension. To see how our techniques also apply here, we first explain Santos' construction in more detail. As the original construction in \cite{s-11} is stated in terms of prismatoids, the polar duals of spindles, we briefly repeat it in the language of spindles here. 

Let $S_d^f \subset \R^d$ be a $d$-dimensional spindle with $f$ facets and length $l$ where $f>2d$ and $l>d$. We denote the apices of $S_d^f$ by $\mathbf{u}$ and $\mathbf{v}$. Since $f>2d$, at least one of the apices $\mathbf{u}$ is degenerate. Now choose an arbitrary facet $F$ of $S_d^f$ that contains the other apex $\mathbf{v}$ and perform the following \emph{wedge operation}:
Let $H^+$ and $H^-$ be two (non-parallel) hyperplanes in $\R^{d+1}$ such that each of them intersects the interior of $S_d^f \times \R \subset \R^{d+1}$ and $H^+ \cap H^- \supseteq F \times \{0\}$. Then for the two polyhedra $W^\pm$ given by $W^\pm = (S_d^f \times \R) \cap H^\pm$, we define $W_F(S_d^f) = \conv(W^+ \cup W^-)$. Note that by construction, $W^\pm$ are affinely equivalent embeddings of the spindle $S_d^f$ into the two hyperplanes $H^\pm$. Hence, $W_F(S_d^f)$ is a polytope again. We call $W_F(S_d^f)$ a \emph{wedge} (on $S_d^f$) \emph{over the facet $F$}. See \cref{fig:wedge} for an illustration of the wedge operation. Note that this operation can increase the circuit diameter by at most one \cite{bsy-18}. 

The wedge $W_F(S_d^f)$ has $f+1$ facets in dimension $d+1$ and is almost a spindle: each facet either contains the vertex $(\mathbf{v},0)$ or the edge between $\mathbf{u}^+$ and $\mathbf{u}^-$, where $\mathbf{u}^\pm$ denotes the apex of $W^\pm$ distinct from $(\mathbf{v},0)$. To get a spindle from $W_F(S_d^f)$, we carefully perturb the facets of $W_F(S_d^f)$ that contain the edge between $\mathbf{u}^\pm$ so as to make an interior point of this edge become a vertex (the new apex; see \cref{fig:wedge}c). If the perturbation is done appropriately as described in \cite{s-11}, the resulting spindle $S_{d+1}^{f+1}$ has length at least $l+1$. In fact, by the proof of Theorem 2.6 in \cite{s-11}, carefully perturbing a \emph{single} facet suffices to increase the length as desired.

If this wedge-plus-perturbation operation is iteratively applied $f-2d$ times to $S_d^f$, we obtain an $(f-d)$-dimensional spindle $S_{f-d}^{2f-2d}$ with $2f-2d$ facets and length at least $l+f-2d$. So if $l>d$, then the length of $S_{f-d}^{2f-2d}$ exceeds $f-d$, which means that the spindle $S_{f-d}^{2f-2d}$ violates the Hirsch conjecture.

\begin{figure}[htb]
\includegraphics[]{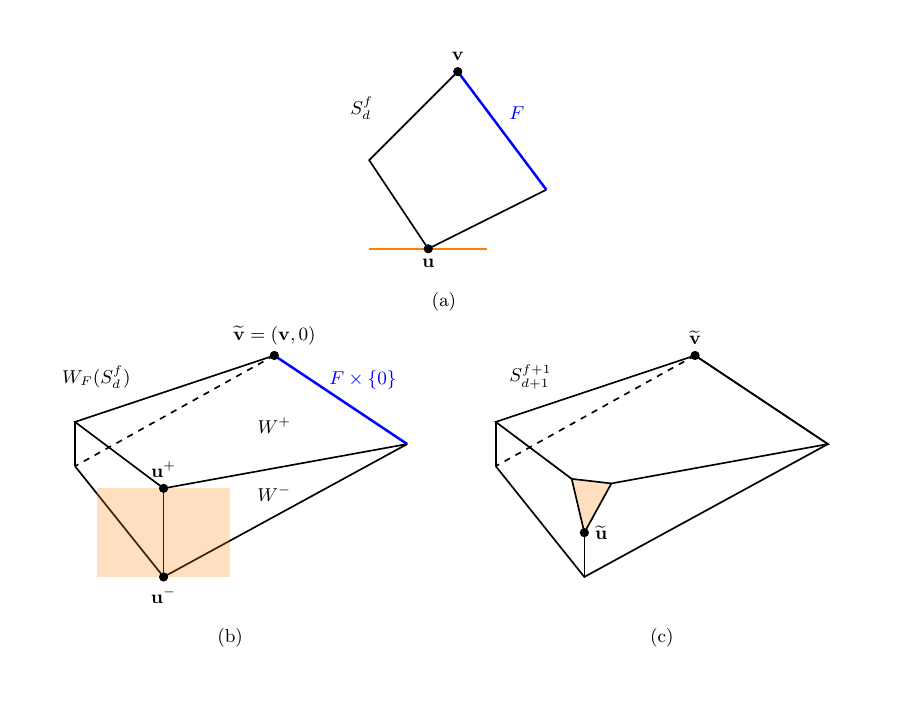}
\vspace*{-1.5cm}
\caption{(a) The initial spindle $S_d^f$ is degenerate (the orange line at $\mathbf{u}$ indicates an `extra' facet incident with $\mathbf{u}$). (b) By wedging over a facet $F$ that contains the other apex $\mathbf{v}$, we obtain the wedge $W_F(S_d^f)$ with two facets $W^\pm$ that are affinely equivalent with $S_d^f$. The adjacent vertices $\mathbf{u}^\pm$ now correspond to the apex $\mathbf{u}$ of $S_d^f$. (c) By perturbing the orange facet of the wedge, we get a spindle $S_{d+1}^{f+1}$ with apices $\widetilde{\mathbf{u}}$ and $\widetilde{\mathbf{v}}$.}\vspace*{-0.4cm}
\label{fig:wedge}
\end{figure}

In \cite{msw-15}, Matschke, Santos, and Weibel explicitly built and computationally checked two Hirsch counterexamples resulting from $S_5^{28}$ and $S^{25}_5$ via Santos' construction described above. The resulting spindles are of length $24$ and $21$ in dimension $23$ and $20$, respectively. The authors remark that carrying out the steps of the construction in such a way that the length indeed increases as desired was computationally feasible only for the two smaller spindles $S_5^{28}$ and $S^{25}_5$ and not for $S_5^{48}$ (see also Santos' remark in \cite[Section 1]{s-11}). For those two spindles, we verified computationally that our proof technique from \cref{sec:Santos-5} for bounding their circuit length also transfers to the explicit counterexamples themselves obtained by Matschke, Santos, and Weibel. 

Using the inequality descriptions and vertex adjacencies provided in \cite{msw-15,w-url}, we found that slight perturbations of the $2$-faces in \cref{fig:santos-23-face,fig:weibel-face} still appear as $2$-faces (with the same combinatorics) after the final wedge-plus-perturbation step. Furthermore, our computations show that on the final spindle, the length of a shortest edge walk from each apex to those $2$-faces increases by exactly the number of times we wedge over a facet that contains the apex.
For instance, the $20$-dimensional explicit counterexample from \cite[Theorem 1.3]{msw-15} based on $S_5^{25}$ has a $2$-face that is a perturbed equivalent of the $2$-face in \cref{fig:weibel-face-c}. The original face of $S_5^{25}$ could be reached within $3$ edge steps from one apex, and the other apex was a vertex of the face already. Now, in dimension $20$, the equivalent face can be reached within $3+8=11$ edge steps from one apex, and its vertex that was the other apex in dimension $5$ now is at distance $7$ from the new apex. 
The perturbations applied by Matschke, Santos, and Weibel are small enough for the properties \cref{enum:weibel-length-realization-i,enum:weibel-length-realization-ii} in \cref{cor:weibel-length} to still hold. By our arguments in \cref{sec:Santos-5}, this means that there is a circuit walk of length at most $2$ on the perturbed $2$-face that connects the two edge walks to and from the face to give a circuit walk of total length at most $11+2+7=20$. Also for the other $2$-faces of $S_5^{25}$ in \cref{fig:weibel-face} and those of $S_5^{28}$ in \cref{fig:santos-23-face}, we verified computationally that the length $5$ circuit walks via those faces can be extended in a completely analogous way to obtain circuit walks of the desired length in higher dimension. As an immediate consequence of these observations, we obtain the following corollaries.

\begin{corollary}
The circuit length of the $20$-dimensional spindle with $40$ facets from \cite[Theorem 1.3]{msw-15} is at most $20$.
\end{corollary}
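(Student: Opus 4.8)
The plan is to transport the length-$5$ circuit walk between the apices of $S_5^{25}$ that runs through one of the $2$-faces in \cref{fig:weibel-face} to the $20$-dimensional spindle $S_{20}^{40}$ produced by the $15$-fold wedge-plus-perturbation construction, using the fact that wedging acts only locally on $2$-faces. Concretely, I would take the explicit inequality description and vertex adjacencies of $S_{20}^{40}$ recorded in \cite{msw-15,w-url} and, for each apex, locate a $2$-face that is a slightly perturbed equivalent, with the same combinatorics, of one of the faces of $S_5^{25}$ in \cref{fig:weibel-face} (e.g., the analog of \cref{fig:weibel-face-c}). A breadth-first search on the recorded graph then yields the two relevant edge distances: from the new apex $\widetilde{\mathbf u}$ to a designated vertex of this $2$-face the distance is $3+8 = 11$ (the original distance $3$ plus one for each wedge over a facet through that apex), and from the face-vertex that was the old apex $\mathbf v^-$ in dimension $5$ to the other new apex $\widetilde{\mathbf v}$ the distance is $7$.

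Next I would apply \cref{lem:2-face} to this $2$-face exactly as in the proof of \cref{thm:santos-length}, with $\mathcal V$ consisting of the single vertex that is the image of $\mathbf v^-$: the two critical facet-defining inequalities of the face are among the pairs listed in \cref{enum:weibel-length-realization-i,enum:weibel-length-realization-ii} of \cref{cor:weibel-length}, and since the Matschke--Santos--Weibel perturbations are small, the corresponding polyhedra remain bounded, so the associated unbounded-ness hypothesis of \cref{lem:2-face} holds. Hence there is a circuit walk of length at most $2$ on the face from the vertex reached after the $11$ edge steps (which is distinct from the image of $\mathbf v^-$ by the combinatorics of the face) to the image of $\mathbf v^-$; because the circuit directions involved are parallel to the $2$-face, and circuits of a face are circuits of the ambient polytope, these steps are maximal in $S_{20}^{40}$ as well. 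Concatenating the $11$-step edge walk, the $\le 2$-step circuit walk, and the $7$-step edge walk gives a circuit walk from $\widetilde{\mathbf u}$ to $\widetilde{\mathbf v}$ of total length at most $11+2+7 = 20$. Since the circuit length takes the worst case over the choice of source apex, I would run the same argument for a $2$-face through $\widetilde{\mathbf v}$ (the analog of the faces governed by \cref{enum:weibel-length-realization-ii}) to bound the reverse direction by $20$ as well.

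The analytically delicate content is minimal — a single invocation of \cref{lem:2-face} plus the concatenation count — so the main obstacle is computational: one must certify, on a $20$-dimensional polytope with $40$ facets and a large vertex set, that the desired $2$-faces really do survive all $15$ perturbation steps with unchanged combinatorics, that the boundedness conditions of \cref{cor:weibel-length} are preserved by those perturbations, and that the breadth-first-search distances are exactly $11$ and $7$. These are precisely the checks performed in \cref{sec:Santos-highdim}, so once they are in place the corollary follows immediately.
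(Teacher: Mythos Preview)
Your proposal is correct and matches the paper's own argument essentially line for line: the paper likewise locates a perturbed copy of a $2$-face from \cref{fig:weibel-face} (specifically \cref{fig:weibel-face-c}) in the explicit $20$-dimensional spindle, verifies computationally the edge distances $11$ and $7$ and the persistence of the boundedness conditions of \cref{cor:weibel-length}, and then invokes \cref{lem:2-face} to splice in a $2$-step circuit walk for the total $11+2+7=20$, with the reverse direction handled via the other $2$-faces in \cref{fig:weibel-face}. The only content you add beyond the paper's terse presentation is the explicit remark that maximality of circuit steps on the $2$-face transfers to the ambient polytope, which is correct and implicit in the paper's use of the same device in \cref{thm:santos-length}.
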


\begin{corollary}
The circuit length of the $23$-dimensional spindle with $46$ facets from \cite{msw-15,w-url} is at most $23$.
\end{corollary}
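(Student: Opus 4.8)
The plan is to follow exactly the template established for the $5$-dimensional spindle $S_5^{25}$ in \cref{cor:weibel-length}, but now applied to the explicit $23$-dimensional spindle $S_{23}^{46}$ from \cite{msw-15,w-url}, which is obtained from $S_5^{28}$ by applying the wedge-plus-perturbation operation $28-10=18$ times. The key structural fact I would invoke is that, by \cref{cor:santos-23-length}, the $5$-dimensional spindle $S_5^{28}$ has two $2$-faces (one containing each apex; cf.\ \cref{fig:santos-23-face}) satisfying the hypothesis of \cref{lem:2-face} when we pick the opposite apex for the target vertex set $\mathcal V$. On $S_5^{28}$, one apex lies on such a $2$-face directly and the other apex is within $3$ edge steps of it, so a circuit walk of length at most $3+2=5$ connects the apices through such a face.

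First I would use the inequality descriptions and vertex adjacency data from \cite{msw-15,w-url} to verify computationally that, after the $18$ wedge-plus-perturbation steps, the two relevant $2$-faces survive as $2$-faces of $S_{23}^{46}$ with the same combinatorics (up to a slight perturbation of their facet-defining inequalities). Next, I would check that the boundedness conditions \cref{enum:santos-23-length-realization-i,enum:santos-23-length-realization-ii} from \cref{cor:santos-23-length} still hold for the perturbed realization — the perturbations applied by Matschke, Santos, and Weibel are small enough that the strict-interior arguments in the proof of \cref{cor:santos-23-length} (using that the relevant sums of rows of $A^\pm$ lie strictly inside the appropriate polar cones) remain valid. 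This yields, via \cref{lem:2-face}, that from any vertex of such a perturbed $2$-face there is a circuit walk of length at most $2$ to the incident apex.

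Then I would account for the edge-walk distances in dimension $23$. Each wedge operation over a facet containing a given apex increases the combinatorial distance from that apex to the relevant $2$-face by exactly one (this is the content of the distance-tracking computation analogous to the one described for $S_5^{25}$ in \cref{sec:Santos-highdim}); summed over the $18$ wedge steps, the distance from the new apex $\widetilde{\mathbf u}$ to the surviving $2$-face, plus the distance from the vertex of that face that was formerly the apex $\mathbf v$ to $\widetilde{\mathbf v}$, adds up to $18$ in total, matching the increase from $5$ to $23$. Concatenating an edge walk of appropriate length from $\widetilde{\mathbf u}$ to the $2$-face, a circuit walk of length at most $2$ across the $2$-face (from \cref{lem:2-face}), and an edge walk of the remaining length to $\widetilde{\mathbf v}$ gives a circuit walk of total length at most $23$ between the apices, which is the claimed bound on the circuit length.

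The main obstacle is the computational verification itself: one must confirm that the specific $2$-faces of $S_5^{28}$ identified in \cref{fig:santos-23-face} genuinely persist with the same face lattice through all $18$ perturbation steps of the published construction, and that the edge-walk distances behave additively as claimed. Unlike the analytic arguments for the $5$-dimensional spindles, here there is no symmetry to exploit and the polytope is given only by explicit numerical data, so the argument is necessarily a (routine but nontrivial) computer check using the adjacency lists from \cite{msw-15,w-url}; the conceptual content is entirely inherited from \cref{lem:2-face} and \cref{cor:santos-23-length}.
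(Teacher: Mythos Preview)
Your proposal is correct and matches the paper's own argument essentially step for step: computationally verify that the relevant $2$-faces of $S_5^{28}$ from \cref{fig:santos-23-face} persist (with the same combinatorics) through the $18$ wedge-plus-perturbation steps, check that the boundedness conditions of \cref{cor:santos-23-length} survive the small perturbations so that \cref{lem:2-face} still yields a length-$2$ circuit walk on the face, and verify that the edge-walk distances from the apices to these faces grow by exactly the number of wedges over facets containing each apex, giving a total of $3+2+18=23$. One small slip: your opening sentence points to $S_5^{25}$ and \cref{cor:weibel-length} as the template, but the $23$-dimensional spindle is built from $S_5^{28}$, so the relevant reference throughout is \cref{cor:santos-23-length} (as you in fact use in the body of the proposal).
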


We stress that these two explicit Hirsch counterexamples result from a particular sequence of wedge-plus-perturbation operations applied to $S_5^{25}$ and $S_5^{28}$, respectively. However, the steps of Santos' construction are not uniquely determined: the choice of the facet to wedge over is arbitrary (as long as it contains the right apex), and so is the choice of the facet that is perturbed. Different choices may lead to different counterexamples. Nonetheless, our arguments from \cref{sec:Santos-5} enable us to make the following observation: regardless of how the steps of Santos' construction applied to $S^{28}_5$ or $S_5^{28}$ are executed, the $2$-faces that our circuit length bounds for the $5$-dimensional spindles crucially relied on will be preserved up to slight changes.

To see this, consider the first wedge on $S_5^{25}$ (or $S_5^{28}$) over an arbitrary facet (both apices are degenerate). Let us denote the two facets that are affinely equivalent with $S_5^{25}$ by $W^\pm$ with apices $\mathbf{v}$ and $\mathbf{u}^\pm$, as in the sketch in \cref{fig:wedge}b. Thus, all $2$-faces of $S_5^{25}$ in \cref{fig:weibel-face} also appear as $2$-faces of $W^\pm$ (up to an affine transformation). If we now perturb a facet according to Santos' construction (one of the facets that contains $\mathbf{u}^\pm$), then one of the vertices $\mathbf{u}^\pm$, say $\mathbf{u}^+$, must be cut off in order to get a spindle again (cf.~\cref{fig:wedge}c). Note that the only degenerate vertices of $S_5^{25}$, and therefore of $W^+$, are the apices (this can be verified computationally, e.g., using Polymake \cite{polymake}). So by a slight perturbation of the chosen facet, a $2$-face of $W^+$ that contains $\mathbf{u}^+$ will either be slightly perturbed without changing the combinatorics, or it will become a $2$-face where combinatorially the only change is that $\mathbf{u}^+$ is replaced with two new, adjacent vertices (the edge between them must then be defined by the perturbed facet). Moreover, $2$-faces of $W^+$ that do not contain $\mathbf{u}^+$ are unaffected (up to slight perturbations) since the facet that we perturb contains $\mathbf{u}^+$.
In either case, \cref{lem:2-face} guarantees that on the resulting $2$-face, two circuit steps still suffice to reach a vertex that corresponds to the apex of $S_5^{25}$ contained in the corresponding face of $S_5^{25}$. 
The above observation also applies to $S_5^{28}$ by noting that all vertices of $S_5^{28}$ other than the apices are non-degenerate. However, this is not true for the vertices of $S_5^{48}$. Therefore, we cannot directly conclude that any wedge-plus-perturbation operation according to Santos' construction will preserve the $2$-face in \cref{fig:santos-face} or its symmetric equivalents in the proofs of \cref{thm:santos-length,cor:santos-length-realization}.


\subsection*{Acknowledgements}
 We would like to thank Nicholas Crawford, Jes\'{u}s A. De Loera, Michael Wigal, and Youngho Yoo for insightful discussions.

\bibliographystyle{splncs04}

\clearpage
\section*{Appendix} In our discussion in \cref{sec:Todd}, we identified four additional orientations for the Todd polytope $M_4$ that can be used to disprove the monotone Hirsch conjecture if combined with an objective function uniquely minimized at $\mathbf{0}$. We display these orientations in Figures \ref{fig:Todd2}, \ref{fig:Todd3}, \ref{fig:Todd4}, and \ref{fig:Todd5}.

In \cref{sec:Santos-5}, one of the key arguments for bounding the circuit length of the $5$-dimensional spindles was to exhibit the existence of a $2$-face $F$ incident to the target apex that can be reached in three edge steps from the other. \cref{fig:santos-face} displayed the corresponding subgraph for $S_5^{48}$. In \cref{fig:santos-23-face,fig:weibel-face}, we display the corresponding subgraphs for $S_5^{28}$ and $S_5^{25}$.

\def\yscale{.8cm}
\vfill

\begin{figure}
    \centering
    \begin{tikzpicture}
        \draw[thick] (0, 0) node[orange, circle,fill, inner sep = 1.5pt] {};
        \draw[thick] (-1, 0) node[orange, circle,fill, inner sep = 1.5pt] {};
        \draw[thick] (-2, 1) node[orange, circle,fill, inner sep = 1.5pt] {};
        \draw[thick] (1, 0) node[orange, circle,fill, inner sep = 1.5pt] {};
        \draw[thick] (2, 1) node[orange, circle,fill, inner sep = 1.5pt] {};
        \draw[thick] (2, 3) node[orange, circle,fill, inner sep = 1.5pt] {};
        \draw[thick] (3, 3) node[orange, circle,fill, inner sep = 1.5pt] {};
        \draw[thick] (-3, 3) node[orange, circle,fill, inner sep = 1.5pt] {};
        \draw[thick] (-2, 3) node[orange, circle,fill, inner sep = 1.5pt] {};
        \draw[thick] (-4, 9) node[orange, circle,fill, inner sep = 1.5pt] {};
        \draw[thick] (4, 9) node[orange, circle,fill, inner sep = 1.5pt] {};
        \draw[thick] (0, 3) node[orange, circle,fill, inner sep = 1.5pt] {};
        \draw[thick] (-2, 5) node[orange, circle,fill, inner sep = 1.5pt] {};
        \draw[thick] (-3, 5) node[orange, circle,fill, inner sep = 1.5pt] {};
        \draw[thick] (3, 5) node[orange, circle,fill, inner sep = 1.5pt] {};
        \draw[thick] (2, 5) node[orange, circle,fill, inner sep = 1.5pt] {};
        \draw[thick] (-2, 7) node[orange, circle,fill, inner sep = 1.5pt] {};
        \draw[thick] (2, 7) node[orange, circle,fill, inner sep = 1.5pt] {};
        \draw[thick] (0, 8) node[orange, circle,fill, inner sep = 1.5pt] {};
        \draw[thick] (0, 10) node[orange, circle,fill, inner sep = 1.5pt] {};
        \draw[thick, ->] (-1/10, 1/10) -- (-9/5, 4/5);
        \draw[thick, ->] (1/10, 1/10) -- (9/5, 4/5);
        \draw[thick, ->] (21/10, 5) -- (14/5, 5);
        \draw[thick, ->] (2, 51/10) -- (2, 34/5);
        \draw[thick, ->] (-3, 31/10) -- (-3, 24/5);
        \draw[thick, ->] (-29/10, 3) -- (-11/5, 3);
        \draw[thick, ->] (-9/10, 0) -- (-1/5, 0);
        \draw[thick, ->] (-9/10, 1/10) -- (9/5, 24/5);
        \draw[thick, ->] (-11/10, 1/10) -- (-9/5, 4/5);
        \draw[thick, ->] (-29/10, 49/10) -- (-1/5, 16/5);
        \draw[thick, ->] (-29/10, 51/10) -- (-11/5, 34/5);
        \draw[thick, ->] (29/10, 51/10) -- (11/5, 34/5);
        \draw[thick, ->] (1/10, 31/10) -- (14/5, 24/5);
        \draw[thick, ->] (-21/10, 11/10) -- (-14/5, 14/5);
        \draw[thick, ->] (-2, 11/10) -- (-2, 14/5);
        \draw[thick, ->] (19/10, 71/10) -- (1/5, 39/5);
        \draw[thick, ->] (-19/10, 31/10) -- (9/5, 24/5);
        \draw[thick, ->] (-19/10, 3) -- (-1/5, 3);
        \draw[thick, ->] (39/10, 89/10) -- (11/5, 36/5);
        \draw[thick, ->] (39/10, 89/10) -- (1/5, 41/5);
        \draw[thick, ->] (39/10, 89/10) -- (16/5, 16/5);
        \draw[thick, ->] (19/10, 3) -- (1/5, 3);
        \draw[thick, ->] (19/10, 31/10) -- (-9/5, 24/5);
        \draw[thick, ->] (3, 31/10) -- (3, 24/5);
        \draw[thick, ->] (29/10, 3) -- (11/5, 3);
        \draw[thick, ->] (-21/10, 5) -- (-14/5, 5);
        \draw[thick, ->] (-2, 51/10) -- (-2, 34/5);
        \draw[thick, ->] (-19/10, 71/10) -- (-1/5, 39/5);
        \draw[thick, ->] (-1/10, 99/10) -- (-4/5, 1/5);
        \draw[thick, ->] (1/10, 99/10) -- (19/5, 46/5);
        \draw[thick, ->] (-1/10, 99/10) -- (-19/5, 46/5);
        \draw[thick, ->] (1/10, 99/10) -- (4/5, 1/5);
        \draw[thick, ->] (-39/10, 89/10) -- (-16/5, 16/5);
        \draw[thick, ->] (-39/10, 89/10) -- (-1/5, 41/5);
        \draw[thick, ->] (-39/10, 89/10) -- (-11/5, 36/5);
        \draw[thick, ->] (9/10, 0) -- (1/5, 0);
        \draw[thick, ->] (9/10, 1/10) -- (-9/5, 24/5);
        \draw[thick, ->] (11/10, 1/10) -- (9/5, 4/5);
        \draw[thick, ->] (2, 11/10) -- (2, 14/5);
        \draw[thick, ->] (21/10, 11/10) -- (14/5, 14/5);
    \end{tikzpicture}
    \caption{The digraph of $M_{4}$ for the orientation $\mathbf{c} = (10716,13680,3477,4465)$}
    \label{fig:Todd2}
\end{figure}
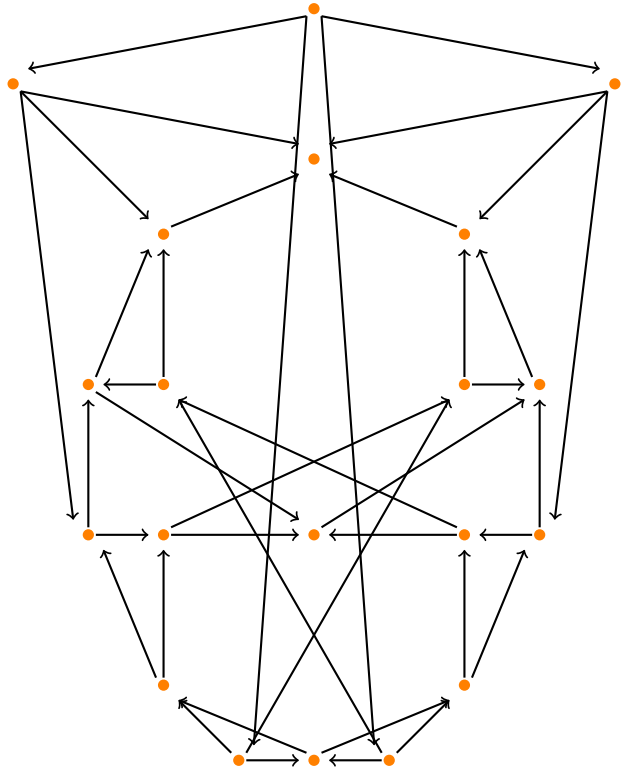

\vfill

\begin{figure}
    \centering
    \begin{tikzpicture}
        \draw[thick] (0, 0) node[orange, circle,fill, inner sep = 1.5pt] {};
        \draw[thick] (-1, 0) node[orange, circle,fill, inner sep = 1.5pt] {};
        \draw[thick] (-2, 1) node[orange, circle,fill, inner sep = 1.5pt] {};
        \draw[thick] (1, 0) node[orange, circle,fill, inner sep = 1.5pt] {};
        \draw[thick] (2, 1) node[orange, circle,fill, inner sep = 1.5pt] {};
        \draw[thick] (2, 3) node[orange, circle,fill, inner sep = 1.5pt] {};
        \draw[thick] (3, 3) node[orange, circle,fill, inner sep = 1.5pt] {};
        \draw[thick] (-3, 3) node[orange, circle,fill, inner sep = 1.5pt] {};
        \draw[thick] (-2, 3) node[orange, circle,fill, inner sep = 1.5pt] {};
        \draw[thick] (-4, 9) node[orange, circle,fill, inner sep = 1.5pt] {};
        \draw[thick] (4, 9) node[orange, circle,fill, inner sep = 1.5pt] {};
        \draw[thick] (0, 3) node[orange, circle,fill, inner sep = 1.5pt] {};
        \draw[thick] (-2, 5) node[orange, circle,fill, inner sep = 1.5pt] {};
        \draw[thick] (-3, 5) node[orange, circle,fill, inner sep = 1.5pt] {};
        \draw[thick] (3, 5) node[orange, circle,fill, inner sep = 1.5pt] {};
        \draw[thick] (2, 5) node[orange, circle,fill, inner sep = 1.5pt] {};
        \draw[thick] (-2, 7) node[orange, circle,fill, inner sep = 1.5pt] {};
        \draw[thick] (2, 7) node[orange, circle,fill, inner sep = 1.5pt] {};
        \draw[thick] (0, 8) node[orange, circle,fill, inner sep = 1.5pt] {};
        \draw[thick] (0, 10) node[orange, circle,fill, inner sep = 1.5pt] {};
        \draw[thick, ->] (-1/10, 1/10) -- (-9/5, 4/5);
        \draw[thick, ->] (1/10, 1/10) -- (9/5, 4/5);
        \draw[thick, ->] (21/10, 5) -- (14/5, 5);
        \draw[thick, ->] (2, 51/10) -- (2, 34/5);
        \draw[thick, ->] (-3, 31/10) -- (-3, 24/5);
        \draw[thick, ->] (-29/10, 3) -- (-11/5, 3);
        \draw[thick, ->] (-9/10, 0) -- (-1/5, 0);
        \draw[thick, ->] (-9/10, 1/10) -- (9/5, 24/5);
        \draw[thick, ->] (-11/10, 1/10) -- (-9/5, 4/5);
        \draw[thick, ->] (-29/10, 51/10) -- (-11/5, 34/5);
        \draw[thick, ->] (29/10, 49/10) -- (1/5, 16/5);
        \draw[thick, ->] (29/10, 51/10) -- (11/5, 34/5);
        \draw[thick, ->] (-1/10, 31/10) -- (-14/5, 24/5);
        \draw[thick, ->] (-21/10, 11/10) -- (-14/5, 14/5);
        \draw[thick, ->] (-2, 11/10) -- (-2, 14/5);
        \draw[thick, ->] (19/10, 71/10) -- (1/5, 39/5);
        \draw[thick, ->] (-19/10, 31/10) -- (9/5, 24/5);
        \draw[thick, ->] (-19/10, 3) -- (-1/5, 3);
        \draw[thick, ->] (39/10, 89/10) -- (11/5, 36/5);
        \draw[thick, ->] (39/10, 89/10) -- (1/5, 41/5);
        \draw[thick, ->] (39/10, 89/10) -- (16/5, 16/5);
        \draw[thick, ->] (19/10, 3) -- (1/5, 3);
        \draw[thick, ->] (19/10, 31/10) -- (-9/5, 24/5);
        \draw[thick, ->] (3, 31/10) -- (3, 24/5);
        \draw[thick, ->] (29/10, 3) -- (11/5, 3);
        \draw[thick, ->] (-21/10, 5) -- (-14/5, 5);
        \draw[thick, ->] (-2, 51/10) -- (-2, 34/5);
        \draw[thick, ->] (-19/10, 71/10) -- (-1/5, 39/5);
        \draw[thick, ->] (-1/10, 99/10) -- (-4/5, 1/5);
        \draw[thick, ->] (1/10, 99/10) -- (19/5, 46/5);
        \draw[thick, ->] (-1/10, 99/10) -- (-19/5, 46/5);
        \draw[thick, ->] (1/10, 99/10) -- (4/5, 1/5);
        \draw[thick, ->] (-39/10, 89/10) -- (-16/5, 16/5);
        \draw[thick, ->] (-39/10, 89/10) -- (-1/5, 41/5);
        \draw[thick, ->] (-39/10, 89/10) -- (-11/5, 36/5);
        \draw[thick, ->] (9/10, 0) -- (1/5, 0);
        \draw[thick, ->] (9/10, 1/10) -- (-9/5, 24/5);
        \draw[thick, ->] (11/10, 1/10) -- (9/5, 4/5);
        \draw[thick, ->] (2, 11/10) -- (2, 14/5);
        \draw[thick, ->] (21/10, 11/10) -- (14/5, 14/5);
    \end{tikzpicture}
    \caption{The digraph of $M_{4}$ for the orientation $\mathbf{c} = (13680, 10716, 4465, 3477)$.}
    \label{fig:Todd3}
\end{figure}

\begin{figure}
    \centering
    \begin{tikzpicture}
        \draw[thick] (0, 0) node[orange, circle,fill, inner sep = 1.5pt] {};
        \draw[thick] (-1, 0) node[orange, circle,fill, inner sep = 1.5pt] {};
        \draw[thick] (-2, 1) node[orange, circle,fill, inner sep = 1.5pt] {};
        \draw[thick] (1, 0) node[orange, circle,fill, inner sep = 1.5pt] {};
        \draw[thick] (2, 1) node[orange, circle,fill, inner sep = 1.5pt] {};
        \draw[thick] (2, 3) node[orange, circle,fill, inner sep = 1.5pt] {};
        \draw[thick] (3, 3) node[orange, circle,fill, inner sep = 1.5pt] {};
        \draw[thick] (-3, 3) node[orange, circle,fill, inner sep = 1.5pt] {};
        \draw[thick] (-2, 3) node[orange, circle,fill, inner sep = 1.5pt] {};
        \draw[thick] (-4, 9) node[orange, circle,fill, inner sep = 1.5pt] {};
        \draw[thick] (4, 9) node[orange, circle,fill, inner sep = 1.5pt] {};
        \draw[thick] (0, 3) node[orange, circle,fill, inner sep = 1.5pt] {};
        \draw[thick] (-2, 5) node[orange, circle,fill, inner sep = 1.5pt] {};
        \draw[thick] (-3, 5) node[orange, circle,fill, inner sep = 1.5pt] {};
        \draw[thick] (3, 5) node[orange, circle,fill, inner sep = 1.5pt] {};
        \draw[thick] (2, 5) node[orange, circle,fill, inner sep = 1.5pt] {};
        \draw[thick] (-2, 7) node[orange, circle,fill, inner sep = 1.5pt] {};
        \draw[thick] (2, 7) node[orange, circle,fill, inner sep = 1.5pt] {};
        \draw[thick] (0, 8) node[orange, circle,fill, inner sep = 1.5pt] {};
        \draw[thick] (0, 10) node[orange, circle,fill, inner sep = 1.5pt] {};
        \draw[thick, ->] (-1/10, 1/10) -- (-9/5, 4/5);
        \draw[thick, ->] (1/10, 1/10) -- (9/5, 4/5);
        \draw[thick, ->] (21/10, 5) -- (14/5, 5);
        \draw[thick, ->] (2, 51/10) -- (2, 34/5);
        \draw[thick, ->] (-3, 31/10) -- (-3, 24/5);
        \draw[thick, ->] (-29/10, 3) -- (-11/5, 3);
        \draw[thick, ->] (-9/10, 0) -- (-1/5, 0);
        \draw[thick, ->] (-9/10, 1/10) -- (9/5, 24/5);
        \draw[thick, ->] (-11/10, 1/10) -- (-9/5, 4/5);
        \draw[thick, ->] (-29/10, 49/10) -- (-1/5, 16/5);
        \draw[thick, ->] (29/10, 51/10) -- (11/5, 34/5);
        \draw[thick, ->] (1/10, 31/10) -- (14/5, 24/5);
        \draw[thick, ->] (-21/10, 11/10) -- (-14/5, 14/5);
        \draw[thick, ->] (-2, 11/10) -- (-2, 14/5);
        \draw[thick, ->] (19/10, 71/10) -- (1/5, 39/5);
        \draw[thick, ->] (-19/10, 31/10) -- (9/5, 24/5);
        \draw[thick, ->] (-19/10, 3) -- (-1/5, 3);
        \draw[thick, ->] (39/10, 89/10) -- (11/5, 36/5);
        \draw[thick, ->] (39/10, 89/10) -- (1/5, 41/5);
        \draw[thick, ->] (39/10, 89/10) -- (16/5, 16/5);
        \draw[thick, ->] (19/10, 3) -- (1/5, 3);
        \draw[thick, ->] (19/10, 31/10) -- (-9/5, 24/5);
        \draw[thick, ->] (3, 31/10) -- (3, 24/5);
        \draw[thick, ->] (29/10, 3) -- (11/5, 3);
        \draw[thick, ->] (-21/10, 5) -- (-14/5, 5);
        \draw[thick, ->] (-2, 51/10) -- (-2, 34/5);
        \draw[thick, ->] (-21/10, 69/10) -- (-14/5, 26/5);
        \draw[thick, ->] (-19/10, 71/10) -- (-1/5, 39/5);
        \draw[thick, ->] (-1/10, 99/10) -- (-4/5, 1/5);
        \draw[thick, ->] (1/10, 99/10) -- (19/5, 46/5);
        \draw[thick, ->] (-1/10, 99/10) -- (-19/5, 46/5);
        \draw[thick, ->] (1/10, 99/10) -- (4/5, 1/5);
        \draw[thick, ->] (-39/10, 89/10) -- (-16/5, 16/5);
        \draw[thick, ->] (-39/10, 89/10) -- (-1/5, 41/5);
        \draw[thick, ->] (-39/10, 89/10) -- (-11/5, 36/5);
        \draw[thick, ->] (9/10, 0) -- (1/5, 0);
        \draw[thick, ->] (9/10, 1/10) -- (-9/5, 24/5);
        \draw[thick, ->] (11/10, 1/10) -- (9/5, 4/5);
        \draw[thick, ->] (2, 11/10) -- (2, 14/5);
        \draw[thick, ->] (21/10, 11/10) -- (14/5, 14/5);
    \end{tikzpicture}
    \caption{The digraph of $M_{4}$ for the orientation $\mathbf{c} = (912, 1824, 513, 817)$}
    \label{fig:Todd4}
\end{figure}

\begin{figure}
    \centering
    \begin{tikzpicture}
        \draw[thick] (0, 0) node[orange, circle,fill, inner sep = 1.5pt] {};
        \draw[thick] (-1, 0) node[orange, circle,fill, inner sep = 1.5pt] {};
        \draw[thick] (-2, 1) node[orange, circle,fill, inner sep = 1.5pt] {};
        \draw[thick] (1, 0) node[orange, circle,fill, inner sep = 1.5pt] {};
        \draw[thick] (2, 1) node[orange, circle,fill, inner sep = 1.5pt] {};
        \draw[thick] (2, 3) node[orange, circle,fill, inner sep = 1.5pt] {};
        \draw[thick] (3, 3) node[orange, circle,fill, inner sep = 1.5pt] {};
        \draw[thick] (-3, 3) node[orange, circle,fill, inner sep = 1.5pt] {};
        \draw[thick] (-2, 3) node[orange, circle,fill, inner sep = 1.5pt] {};
        \draw[thick] (-4, 9) node[orange, circle,fill, inner sep = 1.5pt] {};
        \draw[thick] (4, 9) node[orange, circle,fill, inner sep = 1.5pt] {};
        \draw[thick] (0, 3) node[orange, circle,fill, inner sep = 1.5pt] {};
        \draw[thick] (-2, 5) node[orange, circle,fill, inner sep = 1.5pt] {};
        \draw[thick] (-3, 5) node[orange, circle,fill, inner sep = 1.5pt] {};
        \draw[thick] (3, 5) node[orange, circle,fill, inner sep = 1.5pt] {};
        \draw[thick] (2, 5) node[orange, circle,fill, inner sep = 1.5pt] {};
        \draw[thick] (-2, 7) node[orange, circle,fill, inner sep = 1.5pt] {};
        \draw[thick] (2, 7) node[orange, circle,fill, inner sep = 1.5pt] {};
        \draw[thick] (0, 8) node[orange, circle,fill, inner sep = 1.5pt] {};
        \draw[thick] (0, 10) node[orange, circle,fill, inner sep = 1.5pt] {};
        \draw[thick, ->] (-1/10, 1/10) -- (-9/5, 4/5);
        \draw[thick, ->] (1/10, 1/10) -- (9/5, 4/5);
        \draw[thick, ->] (21/10, 5) -- (14/5, 5);
        \draw[thick, ->] (2, 51/10) -- (2, 34/5);
        \draw[thick, ->] (-3, 31/10) -- (-3, 24/5);
        \draw[thick, ->] (-29/10, 3) -- (-11/5, 3);
        \draw[thick, ->] (-9/10, 0) -- (-1/5, 0);
        \draw[thick, ->] (-9/10, 1/10) -- (9/5, 24/5);
        \draw[thick, ->] (-11/10, 1/10) -- (-9/5, 4/5);
        \draw[thick, ->] (-29/10, 51/10) -- (-11/5, 34/5);
        \draw[thick, ->] (29/10, 49/10) -- (1/5, 16/5);
        \draw[thick, ->] (-1/10, 31/10) -- (-14/5, 24/5);
        \draw[thick, ->] (-21/10, 11/10) -- (-14/5, 14/5);
        \draw[thick, ->] (-2, 11/10) -- (-2, 14/5);
        \draw[thick, ->] (21/10, 69/10) -- (14/5, 26/5);
        \draw[thick, ->] (19/10, 71/10) -- (1/5, 39/5);
        \draw[thick, ->] (-19/10, 31/10) -- (9/5, 24/5);
        \draw[thick, ->] (-19/10, 3) -- (-1/5, 3);
        \draw[thick, ->] (39/10, 89/10) -- (11/5, 36/5);
        \draw[thick, ->] (39/10, 89/10) -- (1/5, 41/5);
        \draw[thick, ->] (39/10, 89/10) -- (16/5, 16/5);
        \draw[thick, ->] (19/10, 3) -- (1/5, 3);
        \draw[thick, ->] (19/10, 31/10) -- (-9/5, 24/5);
        \draw[thick, ->] (3, 31/10) -- (3, 24/5);
        \draw[thick, ->] (29/10, 3) -- (11/5, 3);
        \draw[thick, ->] (-21/10, 5) -- (-14/5, 5);
        \draw[thick, ->] (-2, 51/10) -- (-2, 34/5);
        \draw[thick, ->] (-19/10, 71/10) -- (-1/5, 39/5);
        \draw[thick, ->] (-1/10, 99/10) -- (-4/5, 1/5);
        \draw[thick, ->] (1/10, 99/10) -- (19/5, 46/5);
        \draw[thick, ->] (-1/10, 99/10) -- (-19/5, 46/5);
        \draw[thick, ->] (1/10, 99/10) -- (4/5, 1/5);
        \draw[thick, ->] (-39/10, 89/10) -- (-16/5, 16/5);
        \draw[thick, ->] (-39/10, 89/10) -- (-1/5, 41/5);
        \draw[thick, ->] (-39/10, 89/10) -- (-11/5, 36/5);
        \draw[thick, ->] (9/10, 0) -- (1/5, 0);
        \draw[thick, ->] (9/10, 1/10) -- (-9/5, 24/5);
        \draw[thick, ->] (11/10, 1/10) -- (9/5, 4/5);
        \draw[thick, ->] (2, 11/10) -- (2, 14/5);
        \draw[thick, ->] (21/10, 11/10) -- (14/5, 14/5);
    \end{tikzpicture}
    \caption{The digraph of $M_{4}$ for the orientation $\mathbf{c} = (1824, 912, 817, 513)$}
    \label{fig:Todd5}
\end{figure}

\begin{figure}[hbt]
\begin{subfigure}[b]{\textwidth}
\centering
\begin{tikzpicture}[every node/.style={inner sep=5pt},y=\yscale]
    \coordinate (v80) at (-1.2246467991473532e-16, -2.0);
    \coordinate (v42) at (-1.1755705045849463, -1.618033988749895);
    \coordinate (v44) at (-1.902113032590307, -0.6180339887498948);
    \coordinate (v120) at (-1.902113032590307, 0.6180339887498948);
    \coordinate (v108) at (-1.1755705045849463, 1.618033988749895);
    \coordinate (v171) at (-1.2246467991473532e-16, 2.0);
    \coordinate (v0) at (2, 0);
    \coordinate (v18) at (-6, -.6);
    \coordinate (v53) at (-4, 1);
    \coordinate (v26) at (-6, .6);
    \coordinate (v12) at (-8, 0);
    \coordinate (v95) at (-4, -1);

    \draw[thick] (v0) -- (v80);
    \draw[thick] (v0) -- (v171);
    \draw[thick] (v42) -- (v80);
    \draw[thick] (v42) -- (v44);
    \draw[thick] (v171) -- (v108);
    \draw[thick] (v12) -- (v18);
    \draw[thick] (v12) -- (v26);
    \draw[thick] (v44) -- (v120);
    \draw[thick] (v44) -- (v95);
    \draw[thick] (v108) -- (v120);
    \draw[thick] (v18) -- (v95);
    \draw[thick] (v18) -- (v26);
    \draw[thick] (v53) -- (v120);
    \draw[thick] (v53) -- (v95);
    \draw[thick] (v53) -- (v26);

    \draw[black,fill] (v0) circle (2pt) node[anchor=west] {$\mathbf{v}^-$};
    \draw[black,fill] (v80) circle (2pt) node[anchor=west,inner sep=7.5pt] {$\mathsf{2^-8^-}$};
    \draw[black,fill] (v42) circle (2pt) node[anchor=east] {$\mathsf{2^-7^+}$};
    \draw[blue,fill] (v44) circle (2pt) node[anchor=west] {$\mathsf{3^+7^+}$};
    \draw[blue,fill] (v120) circle (2pt) node[anchor=west] {$\mathsf{3^+6^+}$};
    \draw[black,fill] (v108) circle (2pt) node[anchor=east] {$\mathsf{2^+6^+}$};
    \draw[black,fill] (v171) circle (2pt) node[anchor=west,inner sep=7.5pt] {$\mathsf{2^+8^+}$};
    \draw[black,fill] (v12) circle (2pt) node[anchor=east] {$\mathbf{v}^+$};
    \draw[black,fill] (v26) circle (2pt) node[anchor=south east] {$\mathsf{1^+3^+4^+6^+10^+}$};
    \draw[black,fill] (v53) circle (2pt) node[anchor=south] {$\mathsf{3^+4^+6^+10^+13^+}$};
    \draw[black,fill] (v18) circle (2pt) node[anchor=north east] {$\mathsf{1^+3^+4^+7^+10^+}$};
    \draw[black,fill] (v95) circle (2pt) node[anchor=north] {$\mathsf{3^+4^+7^+10^+13^+}$};

    \node[gray] at (0,0) {$F \colon \mathsf{10^+11^+13^+}$};
\end{tikzpicture}
\caption{}
\label{fig:santos-23-face-a}
\end{subfigure}
\begin{subfigure}[b]{\textwidth}
\centering
\begin{tikzpicture}[every node/.style={inner sep=5pt},y=\yscale]
    \coordinate (v102) at (-1.2246467991473532e-16, -2.0);
    \coordinate (v164) at (-1.1755705045849463, -1.618033988749895);
    \coordinate (v175) at (-1.902113032590307, -0.6180339887498948);
    \coordinate (v103) at (-1.902113032590307, 0.6180339887498948);
    \coordinate (v167) at (-1.1755705045849463, 1.618033988749895);
    \coordinate (v170) at (-1.2246467991473532e-16, 2.0);
    \coordinate (v0) at (2, 0);
    \coordinate (v49) at (-6, -.6);
    \coordinate (v24) at (-4, -1);
    \coordinate (v41) at (-6, .6);
    \coordinate (v12) at (-8, 0);
    \coordinate (v45) at (-4, 1);

    \draw[thick] (v0) -- (v102);
    \draw[thick] (v0) -- (v170);
    \draw[thick] (v164) -- (v102);
    \draw[thick] (v164) -- (v175);
    \draw[thick] (v103) -- (v167);
    \draw[thick] (v103) -- (v175);
    \draw[thick] (v103) -- (v45);
    \draw[thick] (v167) -- (v170);
    \draw[thick] (v41) -- (v45);
    \draw[thick] (v41) -- (v12);
    \draw[thick] (v12) -- (v49);
    \draw[thick] (v45) -- (v24);
    \draw[thick] (v175) -- (v24);
    \draw[thick] (v49) -- (v24);

    \draw[black,fill] (v0) circle (2pt) node[anchor=west] {$\mathbf{v}^-$};
    \draw[black,fill] (v164) circle (2pt) node[anchor=east] {$\mathsf{ 3^- 6^- }$};
    \draw[black,fill] (v102) circle (2pt) node[anchor=west,inner sep=7.5pt] {$\mathsf{ 3^- 11^-}$};
    \draw[blue,fill] (v103) circle (2pt) node[anchor=west] {$\mathsf{ 2^+ 6^+ }$};
    \draw[black,fill] (v167) circle (2pt) node[anchor=east] {$\mathsf{ 3^+ 6^+ }$};
    \draw[black,fill] (v41) circle (2pt) node[anchor=south east] {$\mathsf{ 1^+ 2^+ 4^+ 6^+ 9^+}$};
    \draw[black,fill] (v170) circle (2pt) node[anchor=west,inner sep=7.5pt] {$\mathsf{ 3^+ 11^+}$};
    \draw[black,fill] (v12) circle (2pt) node[anchor=east] {$\mathbf{v}^+$};
    \draw[black,fill] (v45) circle (2pt) node[anchor=south] {$\mathsf{ 1^+ 2^+ 6^+ 9^+ 13^+}$};
    \draw[blue,fill] (v175) circle (2pt) node[anchor=west] {$\mathsf{ 2^+ 6^- }$};
    \draw[black,fill] (v49) circle (2pt) node[anchor=north east] {$\mathsf{ 1^+ 2^+ 4^- 6^- 9^+}$};
    \draw[black,fill] (v24) circle (2pt) node[anchor=north] {$\mathsf{ 1^+ 2^+ 6^- 9^+ 13^+}$};

    \node[gray] at (0,0) {$F \colon \mathsf{ 8^+ 9^+ 13^+}$};
\end{tikzpicture}
\caption{}
\label{fig:santos-23-face-b}
\end{subfigure}
\begin{subfigure}[b]{\textwidth}
\centering
\begin{tikzpicture}[every node/.style={inner sep=5pt},y=\yscale]  
    \coordinate (v60) at (1.2246467991473532e-16, -2.0);
    \coordinate (v146) at (1.1755705045849463, -1.618033988749895);
    \coordinate (v93) at (1.902113032590307, -0.6180339887498948);
    \coordinate (v43) at (1.902113032590307, 0.6180339887498948);
    \coordinate (v45) at (1.1755705045849463, 1.618033988749895);
    \coordinate (v41) at (1.2246467991473532e-16, 2.0);
    \coordinate (v12) at (-2, 0);
    \coordinate (v0) at (8, 0);
    \coordinate (v137) at (4, -1);
    \coordinate (v108) at (4, 1);
    \coordinate (v171) at (6, .6);
    \coordinate (v158) at (6, -.6);

    \draw[thick] (v0) -- (v158);
    \draw[thick] (v0) -- (v171);
    \draw[thick] (v137) -- (v158);
    \draw[thick] (v137) -- (v93);
    \draw[thick] (v137) -- (v108);
    \draw[thick] (v41) -- (v45);
    \draw[thick] (v41) -- (v12);
    \draw[thick] (v43) -- (v108);
    \draw[thick] (v43) -- (v45);
    \draw[thick] (v43) -- (v93);
    \draw[thick] (v12) -- (v60);
    \draw[thick] (v108) -- (v171);
    \draw[thick] (v171) -- (v158);
    \draw[thick] (v146) -- (v60);
    \draw[thick] (v146) -- (v93);

    \draw[black,fill] (v0) circle (2pt) node[anchor=west] {$\mathbf{v}^-$};
    \draw[black,fill] (v137) circle (2pt) node[anchor=north] {$\mathsf{ 2^+ 6^+ 10^+ 11^+ 13^-}$};
    \draw[black,fill] (v41) circle (2pt) node[anchor=east] {$\mathsf{ 4^+ 9^+}$};
    \draw[blue,fill] (v43) circle (2pt) node[anchor=east] {$\mathsf{ 10^+ 13^+}$};
    \draw[black,fill] (v12) circle (2pt) node[anchor=east] {$\mathbf{v}^+$};
    \draw[black,fill] (v108) circle (2pt) node[anchor=south] {$\mathsf{ 2^+ 6^+ 10^+ 11^+ 13^+}$};
    \draw[black,fill] (v171) circle (2pt) node[anchor=south west] {$\mathsf{ 2^+ 8^+ 10^+ 11^+ 13^+}$};
    \draw[black,fill] (v45) circle (2pt) node[anchor=west,inner sep=7.5pt] {$\mathsf{ 9^+ 13^+}$};
    \draw[black,fill] (v146) circle (2pt) node[anchor=west,inner sep=7.5pt] {$\mathsf{ 9^- 13^-}$};
    \draw[black,fill] (v60) circle (2pt) node[anchor=east] {$\mathsf{5^+ 9^-}$};
    \draw[blue,fill] (v93) circle (2pt) node[anchor=east] {$\mathsf{ 10^+ 13^-}$};
    \draw[black,fill] (v158) circle (2pt) node[anchor=north west] {$\mathsf{ 2^+ 8^+ 10^+ 11^+ 13^-}$};

    \node[gray] at (0,0) {$F \colon \mathsf{ 1^+ 2^+ 6^+}$};
\end{tikzpicture}
\caption{}
\label{fig:santos-23-face-c}
\end{subfigure}
\begin{subfigure}[b]{\textwidth}
\centering
\begin{tikzpicture}[every node/.style={inner sep=5pt},y=\yscale]
    \coordinate (v88) at (1.2246467991473532e-16, -2.0);
    \coordinate (v215) at (1.1755705045849463, -1.618033988749895);
    \coordinate (v202) at (1.902113032590307, -0.6180339887498948);
    \coordinate (v116) at (1.902113032590307, 0.6180339887498948);
    \coordinate (v53) at (1.1755705045849463, 1.618033988749895);
    \coordinate (v26) at (1.2246467991473532e-16, 2.0);
    \coordinate (v12) at (-2, 0);
    \coordinate (v0) at (8, 0);
    \coordinate (v66) at (6, -.6);
    \coordinate (v197) at (4, -1);
    \coordinate (v167) at (4, 1);
    \coordinate (v170) at (6, .6);

    \draw[thick] (v0) -- (v66);
    \draw[thick] (v0) -- (v170);
    \draw[thick] (v66) -- (v197);
    \draw[thick] (v197) -- (v202);
    \draw[thick] (v197) -- (v167);
    \draw[thick] (v167) -- (v170);
    \draw[thick] (v167) -- (v116);
    \draw[thick] (v202) -- (v215);
    \draw[thick] (v202) -- (v116);
    \draw[thick] (v12) -- (v88);
    \draw[thick] (v12) -- (v26);
    \draw[thick] (v116) -- (v53);
    \draw[thick] (v53) -- (v26);
    \draw[thick] (v215) -- (v88);

    \draw[black,fill] (v88) circle (2pt) node[anchor=east] {$\mathsf{1^-10^-}$};
    \draw[black,fill] (v215) circle (2pt) node[anchor=west,inner sep=7.5pt] {$\mathsf{10^-14^+}$};
    \draw[blue,fill] (v202) circle (2pt) node[anchor=east] {$\mathsf{9^+14^+}$};
    \draw[blue,fill] (v116) circle (2pt) node[anchor=east] {$\mathsf{9^+13^+}$};
    \draw[black,fill] (v53) circle (2pt) node[anchor=west,inner sep=7.5pt] {$\mathsf{10^+13^+}$};
    \draw[black,fill] (v26) circle (2pt) node[anchor=east] {$\mathsf{1^+10^+}$};
    \draw[black,fill] (v12) circle (2pt) node[anchor=east] {$\mathbf{v}+$};
    \draw[black,fill] (v0) circle (2pt) node[anchor=west] {$\mathbf{v}^-$};
    \draw[black,fill] (v170) circle (2pt) node[anchor=south west] {$\mathsf{3^+8^+9^+11^+13^+}$};
    \draw[black,fill] (v167) circle (2pt) node[anchor=south] {$\mathsf{3^+6^+8^+9^+13^+}$};
    \draw[black,fill] (v66) circle (2pt) node[anchor=north west] {$\mathsf{3^+8^+9^+12^+14^+}$};
    \draw[black,fill] (v197) circle (2pt) node[anchor=north] {$\mathsf{3^+6^+8^+9^+14^+}$};

    \node[gray] at (0,0) {$F \colon \mathsf{3^+4^+6^+}$};
\end{tikzpicture}
\caption{}
\label{fig:santos-23-face-d}
\end{subfigure}

\caption{Four subgraphs of $S_5^{28}$ induced by all vertices of a $2$-face $F$ and by vertices on a shortest path from the apex not contained in $F$ to a vertex of $F$. The face $F$ is defined by inequalities (a) $\mathsf{10^+}$, $\mathsf{11^+}$, $\mathsf{13^+}$, or (b) $\mathsf{8^+}$, $\mathsf{9^+}$, $\mathsf{13^+}$, or (c) $\mathsf{1^+}$, $\mathsf{2^+}$, $\mathsf{6^+}$, or (d) $\mathsf{3^+}$, $\mathsf{4^+}$, $\mathsf{6^+}$, respectively. Vertex labels indicate which inequalities are tight. For the vertices of $F$, we omitted the facets containing $F$ from their labels. In each of the graphs (a) to (d), the two highlighted vertices are at distance $3$ from both $\mathbf{v}^+$ and $\mathbf{v}^-$.}
\label{fig:santos-23-face}
\end{figure}

\begin{figure}[hbt]
\begin{subfigure}[b]{\textwidth}
\centering
\begin{tikzpicture}[every node/.style={inner sep=5pt},y=\yscale]
    \coordinate (v127) at (-1.2246467991473532e-16, 2.0);
    \coordinate (v132) at (-1.1755705045849463, 1.618033988749895);
    \coordinate (v130) at (-1.902113032590307, 0.6180339887498948);
    \coordinate (v138) at (-1.902113032590307, -0.6180339887498948);
    \coordinate (v131) at (-1.1755705045849463, -1.618033988749895);
    \coordinate (v151) at (-1.2246467991473532e-16, -2.0);
    \coordinate (v0) at (2, 0);
    \coordinate (v50) at (-8, 0);
    \coordinate (v62) at (-6, .6);
    \coordinate (v9) at (-4, 1);
    \coordinate (v46) at (-6, -.6);
    \coordinate (v41) at (-4, -1);

    \draw[thick] (v0) -- (v151);
    \draw[thick] (v0) -- (v127);
    \draw[thick] (v130) -- (v9);
    \draw[thick] (v130) -- (v138);
    \draw[thick] (v130) -- (v132);
    \draw[thick] (v131) -- (v138);
    \draw[thick] (v131) -- (v151);
    \draw[thick] (v132) -- (v127);
    \draw[thick] (v9) -- (v62);
    \draw[thick] (v9) -- (v41);
    \draw[thick] (v138) -- (v41);
    \draw[thick] (v41) -- (v46);
    \draw[thick] (v46) -- (v50);
    \draw[thick] (v50) -- (v62);

    \draw[black,fill] (v0) circle (2pt) node[anchor=west] {$\mathbf{v}^-$};
    \draw[blue,fill] (v130) circle (2pt) node[anchor=west] {$\mathsf{ 3\;5}$};
    \draw[black,fill] (v131) circle (2pt) node[anchor=east,inner sep=7.5pt] {$\mathsf{ 2\;7}$};
    \draw[black,fill] (v132) circle (2pt) node[anchor=east,inner sep=7.5pt] {$\mathsf{ 1\;5}$};
    \draw[black,fill] (v9) circle (2pt) node[anchor=south] {$\mathsf{3\;5\;9\;15\;17}$};
    \draw[blue,fill] (v138) circle (2pt) node[anchor=west] {$\mathsf{ 3\;7}$};
    \draw[black,fill] (v41) circle (2pt) node[anchor=north] {$\mathsf{ 3\;7\;9\;15\;17}$};
    \draw[black,fill] (v46) circle (2pt) node[anchor=north east] {$\mathsf{ 3\;7\;9\;11\;15}$};
    \draw[black,fill] (v50) circle (2pt) node[anchor=east] {$\mathbf{v}^+$};
    \draw[black,fill] (v151) circle (2pt) node[anchor=west,inner sep=7.5pt] {$\mathsf{ 2\;22}$};
    \draw[black,fill] (v62) circle (2pt) node[anchor=south east] {$\mathsf{3\;5\;9\;12\;15}$};
    \draw[black,fill] (v127) circle (2pt) node[anchor=west,inner sep=7.5pt] {$\mathsf{ 1\;23}$};

    \node[gray] at (0,0) {$F \colon \mathsf{15\; 17\; 21}$};
\end{tikzpicture}
\caption{}
\label{fig:weibel-face-a}
\end{subfigure}
\begin{subfigure}[b]{\textwidth}
\centering
\begin{tikzpicture}[every node/.style={inner sep=5pt},y=\yscale]
    \coordinate (v153) at (-1.2246467991473532e-16, 2.0);
    \coordinate (v172) at (-0.9999999999999998, 1.7320508075688774);
    \coordinate (v160) at (-1.7320508075688772, 1.0);
    \coordinate (v161) at (-2.0, -0.0);
    \coordinate (v174) at (-1.7320508075688774, -0.9999999999999997);
    \coordinate (v175) at (-0.9999999999999998, -1.7320508075688774);
    \coordinate (v205) at (-1.2246467991473532e-16, -2.0);
    \coordinate (v0) at (2, 0);
    \coordinate (v165) at (-4, -1);
    \coordinate (v50) at (-8, 0);
    \coordinate (v21) at (-4, 1);
    \coordinate (v29) at (-6, .6);
    \coordinate (v30) at (-6, -.6);

    \draw[thick] (v160) -- (v21);
    \draw[thick] (v160) -- (v172);
    \draw[thick] (v160) -- (v161);
    \draw[thick] (v161) -- (v21);
    \draw[thick] (v161) -- (v165);
    \draw[thick] (v161) -- (v174);
    \draw[thick] (v0) -- (v153);
    \draw[thick] (v0) -- (v205);
    \draw[thick] (v165) -- (v30);
    \draw[thick] (v165) -- (v174);
    \draw[thick] (v172) -- (v153);
    \draw[thick] (v205) -- (v175);
    \draw[thick] (v174) -- (v175);
    \draw[thick] (v50) -- (v29);
    \draw[thick] (v50) -- (v30);
    \draw[thick] (v21) -- (v29);
    \draw[thick] (v29) -- (v30);

    \draw[blue,fill] (v160) circle (2pt) node[anchor=west,inner sep=7.5pt] {$\mathsf{ 2\;7}$};
    \draw[blue,fill] (v161) circle (2pt) node[anchor=west] {$\mathsf{ 2\;9}$};
    \draw[black,fill] (v0) circle (2pt) node[anchor=west] {$\mathbf{v}^-$};
    \draw[black,fill] (v165) circle (2pt) node[anchor=north] {$\mathsf{ 2\;8\;9\;13\;22}$};
    \draw[black,fill] (v172) circle (2pt) node[anchor=east,inner sep=7.5pt] {$\mathsf{ 3\;7}$};
    \draw[black,fill] (v205) circle (2pt) node[anchor=west,inner sep=7.5pt] {$\mathsf{ 4\;23}$};
    \draw[blue,fill] (v174) circle (2pt) node[anchor=west,inner sep=7.5pt] {$\mathsf{ 8\;9}$};
    \draw[black,fill] (v175) circle (2pt) node[anchor=east,inner sep=7.5pt] {$\mathsf{ 4\;8}$};
    \draw[black,fill] (v50) circle (2pt) node[anchor=east] {$\mathbf{v}^+$};
    \draw[black,fill] (v21) circle (2pt) node[anchor=south] {$\mathsf{ 2\;7\;9\;13\;18}$};
    \draw[black,fill] (v153) circle (2pt) node[anchor=west,inner sep=7.5pt] {$\mathsf{ 3\;21}$};
    \draw[black,fill] (v29) circle (2pt) node[anchor=south east] {$\mathsf{ 2\;7\;9\;11\;13}$};
    \draw[black,fill] (v30) circle (2pt) node[anchor=north east] {$\mathsf{ 2\;8\;9\;11\;13}$};

    \node[gray] at (0,0) {$F \colon \mathsf{13\; 18\; 22}$};
\end{tikzpicture}
\caption{}
\label{fig:weibel-face-b}
\end{subfigure}
\begin{subfigure}[b]{\textwidth}
\centering
\begin{tikzpicture}[every node/.style={inner sep=5pt},y=\yscale]
    \coordinate (v115) at (1.2246467991473532e-16, -2.0);
    \coordinate (v107) at (0.9999999999999998, -1.7320508075688774);
    \coordinate (v86) at (1.7320508075688772, -1.0);
    \coordinate (v69) at (2.0, 0.0);
    \coordinate (v176) at (1.7320508075688774, 0.9999999999999997);
    \coordinate (v165) at (0.9999999999999998, 1.7320508075688774);
    \coordinate (v30) at (1.2246467991473532e-16, 2.0);
    \coordinate (v50) at (-2, 0);
    \coordinate (v0) at (8, 0);
    \coordinate (v177) at (4, 1);
    \coordinate (v146) at (6, .6);
    \coordinate (v145) at (6, -.6);
    \coordinate (v184) at (4, -1);

    \draw[thick] (v0) -- (v146);
    \draw[thick] (v0) -- (v145);
    \draw[thick] (v69) -- (v176);
    \draw[thick] (v69) -- (v86);
    \draw[thick] (v69) -- (v177);
    \draw[thick] (v165) -- (v30);
    \draw[thick] (v165) -- (v176);
    \draw[thick] (v107) -- (v86);
    \draw[thick] (v107) -- (v115);
    \draw[thick] (v176) -- (v177);
    \draw[thick] (v177) -- (v184);
    \draw[thick] (v177) -- (v146);
    \draw[thick] (v146) -- (v145);
    \draw[thick] (v145) -- (v184);
    \draw[thick] (v50) -- (v115);
    \draw[thick] (v50) -- (v30);
    \draw[thick] (v86) -- (v184);

    \draw[black,fill] (v0) circle (2pt) node[anchor=west] {$\mathbf{v}^-$};
    \draw[blue,fill] (v69) circle (2pt) node[anchor=east] {$\mathsf{ 16\;18}$};
    \draw[black,fill] (v165) circle (2pt) node[anchor=west,inner sep=7.5pt] {$\mathsf{ 13\;22}$};
    \draw[black,fill] (v107) circle (2pt) node[anchor=west,inner sep=7.5pt] {$\mathsf{ 14\;20}$};
    \draw[blue,fill] (v176) circle (2pt) node[anchor=east,inner sep=7.5pt] {$\mathsf{ 18\;22}$};
    \draw[black,fill] (v177) circle (2pt) node[anchor=south] {$\mathsf{ 2\;8\;16\;18\;22}$};
    \draw[black,fill] (v146) circle (2pt) node[anchor=south west] {$\mathsf{ 2\;16\;18\;21\;22}$};
    \draw[black,fill] (v145) circle (2pt) node[anchor=north west] {$\mathsf{ 2\;16\;20\;21\;22}$};
    \draw[black,fill] (v50) circle (2pt) node[anchor=east] {$\mathbf{v}^+$};
    \draw[black,fill] (v115) circle (2pt) node[anchor=east,inner sep=7.5pt] {$\mathsf{ 10\;14}$};
    \draw[blue,fill] (v86) circle (2pt) node[anchor=east,inner sep=7.5pt] {$\mathsf{ 16\;20}$};
    \draw[black,fill] (v184) circle (2pt) node[anchor=north] {$\mathsf{ 2\;8\;16\;20\;22}$};
    \draw[black,fill] (v30) circle (2pt) node[anchor=east,inner sep=7.5pt] {$\mathsf{ 11\;13}$};

    \node[gray] at (0,0) {$F \colon \mathsf{ 2\; 8\; 9}$};
\end{tikzpicture}
\caption{}
\label{fig:weibel-face-c}
\end{subfigure}
\begin{subfigure}[b]{\textwidth}
\centering
\begin{tikzpicture}[every node/.style={inner sep=5pt},y=\yscale]
    \coordinate (v77) at (1.2246467991473532e-16, -2.0);
    \coordinate (v65) at (1.1755705045849463, -1.618033988749895);
    \coordinate (v8) at (1.902113032590307, -0.6180339887498948);
    \coordinate (v25) at (1.902113032590307, 0.6180339887498948);
    \coordinate (v9) at (1.1755705045849463, 1.618033988749895);
    \coordinate (v62) at (1.2246467991473532e-16, 2.0);
    \coordinate (v50) at (-2, 0);
    \coordinate (v0) at (8, 0);
    \coordinate (v2) at (6, -.6);
    \coordinate (v211) at (4, -1);
    \coordinate (v215) at (4, 1);
    \coordinate (v121) at (6, .6);

    \draw[thick] (v0) -- (v2);
    \draw[thick] (v0) -- (v121);
    \draw[thick] (v65) -- (v8);
    \draw[thick] (v65) -- (v77);
    \draw[thick] (v2) -- (v211);
    \draw[thick] (v2) -- (v121);
    \draw[thick] (v121) -- (v215);
    \draw[thick] (v8) -- (v25);
    \draw[thick] (v8) -- (v211);
    \draw[thick] (v9) -- (v62);
    \draw[thick] (v9) -- (v25);
    \draw[thick] (v77) -- (v50);
    \draw[thick] (v50) -- (v62);
    \draw[thick] (v211) -- (v215);
    \draw[thick] (v215) -- (v25);

    \draw[black,fill] (v0) circle (2pt) node[anchor=west] {$\mathbf{v}^-$};
    \draw[black,fill] (v65) circle (2pt) node[anchor=west,inner sep=7.5pt] {$\mathsf{ 16\;18}$};
    \draw[black,fill] (v2) circle (2pt) node[anchor=north west] {$\mathsf{ 3\;13\;18\;21\;23}$};
    \draw[black,fill] (v121) circle (2pt) node[anchor=south west] {$\mathsf{ 3\;13\;17\;21\;23}$};
    \draw[blue,fill] (v8) circle (2pt) node[anchor=east,inner sep=7.5pt] {$\mathsf{ 13\;18}$};
    \draw[black,fill] (v9) circle (2pt) node[anchor=west,inner sep=7.5pt] {$\mathsf{ 15\;17}$};
    \draw[black,fill] (v77) circle (2pt) node[anchor=east,inner sep=7.5pt] {$\mathsf{ 10\;16}$};
    \draw[black,fill] (v50) circle (2pt) node[anchor=east] {$\mathbf{v}^+$};
    \draw[black,fill] (v211) circle (2pt) node[anchor=north] {$\mathsf{ 3\;5\;13\;18\;23}$};
    \draw[black,fill] (v215) circle (2pt) node[anchor=south] {$\mathsf{ 3\;5\;13\;17\;23}$};
    \draw[blue,fill] (v25) circle (2pt) node[anchor=east,inner sep=7.5pt] {$\mathsf{ 13\;17}$};
    \draw[black,fill] (v62) circle (2pt) node[anchor=east,inner sep=7.5pt] {$\mathsf{ 12\;15}$};

    \node[gray] at (0,0) {$F \colon \mathsf{ 3 \;5\; 9}$};
\end{tikzpicture}
\caption{}
\label{fig:weibel-face-d}
\end{subfigure}
\caption{Four subgraphs of $S_5^{25}$ induced by all vertices of a $2$-face $F$ and by vertices on a shortest path from the apex not contained in $F$ to a vertex of $F$. The face $F$ is defined by inequalities (a) $\mathsf{15}, \mathsf{17}, \mathsf{21}$, or (b) $\mathsf{13}, \mathsf{18}, \mathsf{22}$, or (c) $\mathsf{2}, \mathsf{8}, \mathsf{9}$, or (d) $\mathsf{3}, \mathsf{5}, \mathsf{9}$, respectively. Vertex labels indicate which inequalities are tight. For the vertices of $F$, we omitted the facets containing $F$ from their labels. In each of the graphs (a) to (d), the highlighted vertices are at distance $3$ from the apex not contained in $F$.}
\label{fig:weibel-face}
\end{figure}


\end{document}